\titlespacing*{\section}{0pt}{14pt}{4pt}
\titlespacing*{\subsection}{0pt}{8pt}{3pt}
\patchcmd{\ttlh@hang}{\parindent\z@}{\parindent\z@\leavevmode}{}{}
\patchcmd{\ttlh@hang}{\noindent}{}{}{}
\def\maketimestamp{\count255=\time
\divide\count255 by 60\relax
\edef\thetime{\the\count255:}%
\multiply\count255 by-60\relax
\advance\count255 by\time
\edef\thetime{\thetime\ifnum\count255<10 0\fi\the\count255}
\edef\thedate{\number\day-\ifcase\month\or Jan\or Feb\or Mar\or
             Apr\or May\or Jun\or Jul\or Aug\or Sep\or Oct\or
             Nov\or Dec\fi-\number\year}
\def\timstamp{\hbox to\hsize{\tt\hfil\thedate\hfil\thetime\hfil}}}
\numberwithin{equation}{section}  
\newtheorem{theorem}{Theorem}[section]
\newtheorem{lemma}[theorem]{Lemma}
\newtheorem{proposition}[theorem]{Proposition}
\newtheorem{corollary}[theorem]{Corollary}
\theoremstyle{definition}
\newtheorem{definition}[theorem]{Definition} 
\newtheorem{example}{Example}
\theoremstyle{remark}
\newtheorem{remark}{Remark}
\DeclareMathOperator{\supp}{supp} %
\DeclareMathOperator*{\essinf}{ess\,inf} %
\DeclareMathOperator{\exponential}{e}
\newcommandtwoopt{\gaborG}[3][\Lambda][\Gamma]{\mathcal{G}(#3,#1,#2)} 
\newcommand{\myexp}[1]{\exponential^{#1}}
\newcommand{\tran}[1][k]{T_{#1}} 
\newcommand*{\numbersys}[1]{\ensuremath{\mathbb{#1}}}
\newcommand*{\C}{\numbersys{C}}
\newcommand*{\R}{\numbersys{R}}
\newcommand*{\Z}{\numbersys{Z}}
\newcommand*{\N}{\numbersys{N}}
\newcommand*{\cF}{\mathcal{F}}
\newcommand{\itvco}[2]{\ensuremath{\left[{#1},{#2}\right)}} %
\newcommand{\abs}[1]{\ensuremath{\left\lvert#1\right\rvert}}
\newcommand{\norm}[2][]{\ensuremath{\left\lVert#2\right\rVert_{#1}}}
\newcommand{\innerprod}[3][]{\ensuremath{\left\langle #2,#3\right\rangle_{\! #1}}}
\newcommand{\innerprodbig}[3][]{\ensuremath{\bigl \langle #2,#3\bigr\rangle_{\!\!#1}}}
\newcommand{\setprop}[2]{\ensuremath{\left\lbrace{#1} : {#2}\right\rbrace}}
\newlength{\dhatheight}
\newcommand*\oline[1]{%
  \vbox{%
    \hrule height 0.5pt
    \kern0.25ex
    \hbox{%
      \kern-0.1em
      \ifmmode#1\else\ensuremath{#1}\fi
      \kern-0.1em
    }
  }
}
\def\blfootnote{\xdef\@thefnmark{}\@footnotetext} 
\def\subjclass{\xdef\@thefnmark{}\@footnotetext}
\long\def\symbolfootnote[#1]#2{\begingroup%
\def\thefootnote{\fnsymbol{footnote}}\footnote[#1]{#2}\endgroup} 
  \renewenvironment{abstract}{%
      \titlepage
      \null\vfil
      \@beginparpenalty\@lowpenalty
      \begin{center}%
        \bfseries \abstractname
        \@endparpenalty\@M
      \end{center}}%
     {\par\vfil\null\endtitlepage}
  \renewenvironment{abstract}{%
      \if@twocolumn
        \section*{\abstractname}%
      \else
        \small
        \list{}{%
          \settowidth{\labelwidth}{\textbf{\abstractname:}}
          \setlength{\leftmargin}{50pt}
          \setlength{\rightmargin}{50pt}
          \setlength{\itemindent}{\labelwidth}
          \addtolength{\itemindent}{\labelsep}
        }
        \item[\textbf{\abstractname:}]

      \fi}
      {\if@twocolumn\else\endlist\fi}
\begin{document}

\title{On Wilson bases in $L^2(\R^d)$}

\date{\today}

\thispagestyle{plain}

\author{
Marcin Bownik\footnote{\noindent Department of Mathematics, University of Oregon, Eugene, OR 97403-1222, USA, and Institute of Mathematics, Polish Academy of Sciences, ul. Wita Stwosza 57,
80--952 Gda\'nsk, Poland, E-mail: \mbox{\protect\url{mbownik@uoregon.edu}}} ,
Mads S.\ Jakobsen\footnote{Norwegian University of Science and Technology, Department of Mathematical Sciences, Trondheim, Norway, \mbox{E-mail: \protect\url{mads.jakobsen@ntnu.no}}}
, Jakob Lemvig\footnote{Technical University of Denmark, Department of Applied Mathematics and Computer Science, Matematiktorvet 303B, 2800 Kgs.\ Lyngby, Denmark, \mbox{E-mail: \protect\url{jakle@dtu.dk}}}
, Kasso A.~Okoudjou\footnote{Department of Mathematics, University of Maryland, College Park, MD 20742, USA,
 E-mail: \mbox{\protect\url{kasso@math.umd.edu}}}}

\maketitle 

 \blfootnote{2010 {\it Mathematics Subject Classification.} Primary 42C15 Secondary: 94A12.} \blfootnote{{\it Key  words and phrases.}  Characterizing equations, frame, Gabor system, orthonormal basis, shift-invariant system, Wilson  system}

\begin{abstract}
A Wilson system is a collection of finite linear combinations of time frequency shifts of a square integrable function. In this paper we use the fact that a Wilson system is a shift-invariant system to explore its relationship with Gabor systems. It is well known that, starting from a tight Gabor frame for
$L^{2}(\R)$ with redundancy $2$, one can construct an orthonormal Wilson basis for $L^2(\R)$ whose generator
is well localized in the time-frequency plane. In this paper, we show that one can construct multi-dimensional orthonormal Wilson bases starting from tight Gabor frames of redundancy $2^k$ where $k=1, 2, \hdots, d$. These results generalize most of the known results about the existence of orthonormal Wilson bases. 
\end{abstract}
\section{Introduction}
\label{sec:introduction}

One of the goals in signal processing and time-frequency analysis is to find convenient series expansions of functions in $L^{2}(\R^d)$. Examples of such series expansions include  Gabor (also called Weyl-Heisenberg) frames. In order to describe these systems we introduce the translation operator $T_{\lambda}$ and the modulation operator $M_{\gamma}$:
\[ T_{\lambda} f(x) = f(x-\lambda), \ \ M_{\gamma} f(x) = e^{2\pi i \langle x , \gamma \rangle} f(x), \ \ f\in L^{2}(\R^d), \, \lambda,\gamma\in \R^d.\]
A Gabor system generated by the window function $g\in L^{2}(\R^d)$ is the set of functions given by $\{M_{\gamma}T_{\lambda}g\}_{\lambda\in \Lambda,\gamma\in\Gamma}$, where $\Lambda$ and $\Gamma$ are lattices in $\R^{d}$.
 Since modulation is a translation in the frequency domain, the
 operation $M_{\gamma}T_{\lambda}$ is called a time-frequency
 shift. Now, a Gabor frame for $L^{2}(\R^{d})$ is a system of the form
 $\{M_{\gamma}T_{\lambda}g\}_{\lambda\in\Lambda,\gamma\in\Gamma}$ for which there exist constants $a,b>0$ such that
\begin{equation}
\label{eq:gabor-frame-def} a \, \Vert f \Vert^{2} \le \sum_{\lambda\in\Lambda,\gamma\in\Gamma} \vert \langle f, M_{\gamma}T_{\lambda}g\rangle \vert^{2} \le b \, \Vert f \Vert^{2} \quad \text{for all} \quad f\in L^{2}(\R^{d}).
\end{equation}
In case
$\{M_{\gamma}T_{\lambda}g\}_{\lambda\in\Lambda,\gamma\in\Gamma}$
satisfies \eqref{eq:gabor-frame-def},  there exists a function $h\in L^{2}(\R^{d})$
 such that
\begin{equation} \label{eq:gabor-rep-formula}
 f = \sum_{\lambda\in\Lambda,\gamma\in\Gamma} \langle f, M_{\gamma}T_{\lambda}g\rangle M_{\gamma}T_{\lambda} h \ \ \text{for all } f\in L^{2}(\R^{d})
\end{equation}
with unconditionally $L^2$-convergence. Whenever the product of
the volume of the two
fundamental domains of the full-rank lattices $\Lambda$ and
$\Gamma$ is strictly less than one, there exist nice window functions
$g \in L^2(\R)$, e.g., in the Schwartz class or the Feichtinger
algebra, such that the Gabor system
$\{M_{\gamma}T_{\lambda}g\}_{\lambda\in\Lambda,\gamma\in\Gamma}$ is a frame \cite{MR1843717}. 


In many applications in engineering and mathematics, it is desirable, not only to have
smooth and localized generators $g \in L^2(\R^d)$, but also \emph{orthogonal} expansions. However, for Gabor frames this is not possible.
Indeed, the famous Balian-Low
Theorem~\cite{Bali81, Bat88, BenHeiWal94, Dau90, DauJan93, Low85}
states that if a Gabor system is an orthonormal basis or a Riesz basis
for $L^{2}(\R^d)$, then $g$ cannot have rapid decay in both time and
frequency.

Yet, in 1991, Daubechies, Jaffard and Journ\'e~\cite{MR1084973},
inspired by work of K.~G.~Wilson \cite{wi87}, were able to construct an
orthonormal basis of (linear combinations of) 
time-frequency shifts of a univariate function $g\in L^{2}(\R)$ with  good time
and frequency localization. The so-called Wilson systems considered in
\cite{MR1084973} are given as: 
\begin{align*}
\mathcal{W}(g) = \{T_{n}g\}_{n\in\Z} & \cup \{\tfrac{1}{\sqrt{2}} T_{n} (M_{m}+(-1)^m M_{-m})g\}_{n\in\Z,m\in \N} \\
& \cup \{\tfrac{1}{\sqrt{2}} T_{n}T_{1/2} (M_{m}-(-1)^m M_{-m})g\}_{n\in\Z,m\in \N}.
\end{align*} 
From this definition, it is clear that, except from the pure
translations $\{T_{n}g\}_{n\in\Z}$, the Wilson systems produce a
bimodular covering of the frequency line, in the sense that each element of
the system has two peaks in its power  spectrum $\abs{\hat{g}}^2$, assuming the window
function is sufficiently localized in frequency. This should be compared with
the unimodular Gabor system, where each element of
$\{M_{\gamma}T_{\lambda}g\}_{\lambda\in\Lambda,\gamma\in\Gamma}$ has a single
peak in the power spectrum. As the following main result of
\cite{MR1084973} shows, Wilson systems do not suffer from the
restrictions of the Balian-Low Theorem.  
\begin{theorem}[\cite{MR1084973}]\label{wilson-basis-dau} Let $g\in L^{2}(\R)$ be such that $\hat{g}(\omega) = \overline{\hat{g}(\omega)}$ and $\Vert g \Vert_2 = 1$. Then  the Gabor system 
$ \{M_{m}T_{n/2}g\}_{m,n\in\Z}$
is a tight frame for $L^{2}(\R)$ if, and only if, the Wilson system
$\mathcal{W}(g)$ 
is an orthonormal basis for $L^2(\R)$.
\end{theorem}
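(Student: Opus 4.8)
The plan is to view both the Gabor system $\{M_mT_{n/2}g\}_{m,n\in\Z}$ and the Wilson system $\mathcal W(g)$ as shift-invariant systems over the integer lattice $\Z$, to apply the fiberization characterization of Parseval frames and orthonormal bases for shift-invariant systems, and to show that on almost every fiber both the property ``$\{M_mT_{n/2}g\}$ is tight'' and the property ``$\mathcal W(g)$ is an orthonormal basis'' are equivalent to one and the same pointwise identity. As a first step I would rewrite the two systems in shift-invariant form: since $M_mT_k=T_kM_m$ for $k\in\Z$ (the commutator phase $e^{2\pi ikm}$ is trivial), every $M_mT_{n/2}g$ is an integer translate of one of the two windows $M_mg$ or $M_mT_{1/2}g$, so $\{M_mT_{n/2}g\}_{m,n\in\Z}=\{T_k\varphi : k\in\Z,\ \varphi\in\Phi_G\}$ with $\Phi_G=\{M_mg : m\in\Z\}\cup\{M_mT_{1/2}g : m\in\Z\}$; likewise, extracting the integer translations from the definition of $\mathcal W(g)$ gives $\mathcal W(g)=\{T_k\varphi : k\in\Z,\ \varphi\in\Phi_W\}$, where $\Phi_W$ consists of $g$, of $\tfrac1{\sqrt2}(M_m+(-1)^mM_{-m})g$ for $m\in\N$, and of $\tfrac1{\sqrt2}T_{1/2}(M_m-(-1)^mM_{-m})g$ for $m\in\N$.

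Next I would invoke the fiberization theorem for shift-invariant systems over $\Z$: the system $\{T_k\varphi : k\in\Z,\ \varphi\in\Phi\}$ is a Parseval frame for $L^2(\R)$ (respectively, an orthonormal basis) if, and only if, for a.e.\ $\xi\in[0,1)$ the fiber $\widehat\Phi(\xi):=\{(\widehat\varphi(\xi+j))_{j\in\Z} : \varphi\in\Phi\}$ is a Parseval frame (respectively, an orthonormal basis) for $\ell^2(\Z)$, the tight case scaling by the frame bound. Since $\|g\|_2=1$ and the product of the covolumes of the translation lattice $\tfrac12\Z$ and the modulation lattice $\Z$ equals $\tfrac12$, a tight $\{M_mT_{n/2}g\}$ is necessarily $2$-tight, so the theorem reduces to the fiber-wise statement: for a.e.\ $\xi$, the Gabor fiber $\widehat\Phi_G(\xi)$ is a $2$-tight frame for $\ell^2(\Z)$ if, and only if, the Wilson fiber $\widehat\Phi_W(\xi)$ is an orthonormal basis of $\ell^2(\Z)$.

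To prove this fiber equivalence I would identify $\ell^2(\Z)\cong L^2(\T)$ by Fourier series, under which $(\widehat g(\xi+j))_{j\in\Z}$ becomes $h_\xi(\theta)=\sum_{j\in\Z}\widehat g(\xi+j)\,e^{2\pi ij\theta}$ (a fiber of the Zak transform of $g$), integer shifts become multiplication by $e^{2\pi im\theta}$, and the alternating sign $((-1)^j c_j)_j$ becomes translation by $\tfrac12$ on $\T$. A routine computation then shows that $\widehat\Phi_G(\xi)$ corresponds in $L^2(\T)$ to $\{e^{2\pi im\theta}h_\xi(\theta)\}_{m\in\Z}$ together with unimodular multiples of $\{e^{2\pi im\theta}h_\xi(\theta+\tfrac12)\}_{m\in\Z}$, whose frame operator is multiplication by $|h_\xi(\theta)|^2+|h_\xi(\theta+\tfrac12)|^2$; hence $\widehat\Phi_G(\xi)$ is $2$-tight if, and only if, $|h_\xi(\theta)|^2+|h_\xi(\theta+\tfrac12)|^2=2$ for a.e.\ $\theta$. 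On the Wilson side, the recombinations $M_m\pm(-1)^mM_{-m}$ form a fixed block-orthogonal ``folding'' of $\{e^{2\pi ik\theta}\}_{k\in\Z}$ into an orthonormal basis $\{e_k\}_k$ of $L^2(\T)$, so $\widehat\Phi_W(\xi)$ corresponds (up to unimodular factors) to $V_\xi(\{e_k\}_k)$ with $V_\xi=M_{h_\xi}P_++M_{h_\xi(\cdot+1/2)}P_-$, where $P_\pm$ are the eigenprojections of the reflection $f(\theta)\mapsto f(\tfrac12-\theta)$ (the two projections matching the ``$+$'' and ``$-$'' families of Wilson functions, with their respective translation parameters $0$ and $\tfrac12$). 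Thus $\widehat\Phi_W(\xi)$ is an orthonormal basis precisely when $V_\xi$ is unitary, and computing $V_\xi V_\xi^\ast$ — using that $\widehat g=\overline{\widehat g}$ forces $\overline{h_\xi(\theta)}=h_\xi(-\theta)$, which converts the conjugate of $h_\xi$ under the reflection into a translate by $\tfrac12$ — the cross terms cancel and $V_\xi V_\xi^\ast$ reduces to multiplication by $\tfrac12\bigl(|h_\xi(\theta)|^2+|h_\xi(\theta+\tfrac12)|^2\bigr)$; an analogous computation handles $V_\xi^\ast V_\xi$. Hence $V_\xi$ is unitary if, and only if, $|h_\xi(\theta)|^2+|h_\xi(\theta+\tfrac12)|^2=2$ a.e.\ $\theta$ — the very identity governing the Gabor side — and combining with the previous paragraph proves the theorem.

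The step I expect to be the main obstacle is the Wilson-fiber computation: identifying $\widehat\Phi_W(\xi)$ as $V_\xi$ applied to the folded orthonormal basis requires careful tracking of the unimodular phase factors produced by commuting $T_{1/2}$ past the modulations $M_{\pm m}$ (the signs $(-1)^m$) and by the Fourier-series identification, and the unitarity of $V_\xi$ rests on an exact cancellation of the ``off-diagonal'' terms that holds precisely because $\widehat g$ is real-valued; it is here that the hypotheses $\widehat g=\overline{\widehat g}$ and $\|g\|_2=1$ are consumed. Everything else — that a tight Gabor fiber and an orthonormal Wilson fiber are both governed by $|h_\xi(\theta)|^2+|h_\xi(\theta+\tfrac12)|^2=2$ — is then just the comparison of the two fiber computations.
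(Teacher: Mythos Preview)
Your plan is correct and rests on the same underlying idea as the paper --- realizing both systems as shift-invariant over $\Z$ and comparing them via Ron--Shen fiberization --- but the packaging differs in two places. First, the paper works with the autocorrelation functions $t_\alpha(\omega)=\sum_{\gamma}\hat g(\omega-\gamma)\overline{\hat g(\omega-\gamma-\alpha)}$ (the dual-Gramian entries) rather than passing all the way to the Zak-type fiber $h_\xi$ on $L^2(\T)$: it shows directly that $t_{\alpha,\mathcal W}=\tfrac12\, t_{\alpha,\mathcal G}$ for $\alpha$ even and $t_{\alpha,\mathcal W}=0$ for $\alpha$ odd, the vanishing coming from exactly the change-of-variable-plus-real-$\hat g$ trick that kills your off-diagonal terms in $V_\xi V_\xi^\ast$. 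From this the paper obtains the operator identity $S_{\mathcal G}=2S_{\mathcal W}$, whence ``Gabor $2$-tight $\Leftrightarrow$ Wilson Parseval'' follows at once. Second, for the step ``Parseval $\Rightarrow$ orthonormal basis'' the paper does not compute $V_\xi^\ast V_\xi$ on fibers; instead it verifies that each Wilson generator has unit norm by invoking a Wexler--Raz orthogonality lemma (the set $\{M_{2m}g\}_{m\in\Z}$ is orthogonal whenever the Gabor system is tight, because the modulations $M_{2m}$ commute with all time-frequency shifts on the lattice) and then applies ``Parseval frame with unit-norm vectors $\Rightarrow$ ONB''. Your direct unitarity check of $V_\xi$ is a clean alternative that bypasses the Wexler--Raz detour and keeps everything on the fiber; the paper's route, in turn, generalizes more transparently to the $\R^d$ and separable-subgroup settings treated later, where an operator $V_\xi$ would be harder to write down explicitly.
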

 
The construction of Wilson bases using   
Theorem~\ref{wilson-basis-dau} can be illustrated by the following examples.
\begin{example}
  \begin{enumerate}[(a)]
\item
    Consider the function $g(x) = \cos(\tfrac{1}{2}\pi
    x)\mathds{1}_{[-1,1]}(x)$. One can easily show, e.g., by Lemma~\ref{le:tq-tight-for-shift-invariant}, that  $\{M_{m/2}T_{n}g\}_{m,n\in\Z}$ is a tight Gabor frame with frame
    bound $A=2$ and $\Vert g \Vert_{2}=1$. Moreover,
    $g(x)=\overline{g(-x)}$ for all $x\in \R$ and
    $\hat{g}(\omega) = \overline{\hat{g}(\omega)}$ for all $\omega\in
    \R$. Thus the Wilson system generated by this function is an
    orthonormal basis for $L^{2}(\R)$.
\item
    Consider the function $g(x) = (\sqrt{-\vert x
      \vert+1})\mathds{1}_{[-1,1]}(x)$. As above one can easily show that
    $\{M_{m/2}T_{n}g\}_{m,n\in\Z}$ is a tight Gabor frame with frame
    bound $A=2$ and $\Vert g \Vert_{2}=1$. Moreover,
    $g(x)=\overline{g(-x)}$ and $\hat{g}(\omega) =
    \overline{\hat{g}(\omega)}$, for all $x,\omega\in \R$. Thus the
    Wilson system generated by this function is a orthonormal basis
    for $L^{2}(\R)$.
  \end{enumerate}
\end{example}

An important point of  Theorem~\ref{wilson-basis-dau}, which is also
illustrated by the above two examples,  is that starting from a tight
Gabor frame with redundancy $2$, it is possible to construct an
orthonormal Wilson basis for $L^2(\R)$ whose generator is well
localized in time and frequency, e.g., the generator can be chosen to
be a Schwartz class function or a $\mathcal{C}^{\infty}$ function with compact support. It easily follows that a tensor product
of this orthonormal Wilson basis will lead an orthonormal basis for
$L^2(\R^d)$. But beyond this method, it is not known how to construct 
orthonormal Wilson bases for $L^2(\R^d)$.

Tensoring Wilson bases to $L^2(\R^d)$ has several undesirable side
effects. Firstly, the basis functions of a tensored Wilson basis are
$2^d$-modular hence they give rise to a $2^d$-modular covering of the
frequency domain, akin to the situation of separable wavelets in
$L^2(\R^d)$. Gabor frames
$\{M_{\gamma}T_{\lambda}g\}_{\lambda\in\Lambda,\gamma\in\Gamma}$ are
unimodular in all dimensions, hence give rise to a unimodular covering
of the frequency domain. Bimodular coverings are in most applications
as good as unimodular coverings, in particular, if the signals of
interest $f\in L^2(\R^d)$ are real-valued. However, $2^d$-modular
tensor coverings have a curse of dimensionality since, e.g., symmetric
peaks of the power spectrum of real-valued signals will leak out to
$2^{d-1}$ other locations in frequency. Secondly, they are associated
with highly redundant Gabor frames of redundancy $2^d$. Thirdly, the
generating function has to, naturally, be a separable function of the
form $g_1(x_1)\dots g_d(x_d)$. 
Our goal in this paper is to construct Wilson
orthonormal bases in higher dimension that do not suffer from these
tensoring artifacts. Using the frame theory of shift-invariant systems
\cite{BenLi-1998,MR1946982,MR1916862,MR1601115,
MR1350650}, we
 construct orthonormal Wilson bases for  $L^2(\R^d)$ starting from
tight Gabor frames of redundancy $2^k$, $k=1,\dots,d$. This provides
us with a
ladder of Wilson
orthonormal bases with $2^k$-modular covering of the frequency domain
with $k$ ranging from $1$ to $d$. When $k=d$ we recover the tensored
Wilson bases, but when $k=1$ we obtain a bimodular Wilson orthonormal
basis for $L^2(\R^d)$. As we will see, the latter
construction of bimodular Wilson orthonormal bases is in many ways
superior over the tensored Wilson system. 

Our results also shed new light on univariate (as well as
multivariate) Wilson systems. We show
that, whenever one of the two is well-defined, the frame operators of Gabor and Wilson
systems are identical up to scalar multiplication. We present the view
that Wilson system share several properties with the adjoint of the Gabor
system. Firstly, Gabor
and Wilson systems satisfy a duality principle: the Gabor system is a
frame if and only if the Wilson system is a Riesz basis, and we
provide frame bounds. Secondly, Wilson systems satisfy a density-type
theorem: if a Wilson system is a frame or a tight frame, then it is
automatically a
Riesz basis or an orthonormal
basis, respectively.   

The rest of the paper is organized as follows.  In Section~\ref{sec:wils-syst-sympl} we recall a number of
elementary facts about the symplectic matrices and their role in time-frequency
analysis. Section~\ref{wilson-red2} presents necessary results from the theory
of shift-invariant systems and concerns bimodular Wilson orthonormal bases for $L^{2}(\R^d)$ constructed from
redundancy $2$ tight Gabor frames. In particular, the main result of this section is
Theorem~\ref{th:Wilson-Rd} which generalizes
Theorem~\ref{wilson-basis-dau}.  Furthermore, even for $d=1$ the
results of Section~\ref{wilson-red2} yield a more general statement
than Theorem~\ref{th:Wilson-known} stated in Section~\ref{sec:wils-syst-sympl}.  Finally, in
Section~\ref{sec:wilson-onb-from}, we consider Wilson orthonormal bases (Riesz bases) generated from tight
(non-tight) Gabor frames of redundancy $2^k$ for $k=0, 1, 2, \hdots, d$. In particular, the main results of
Section~\ref{sec:wilson-onb-from} are Theorem~\ref{th:sep-Wilson-Rd} and Proposition~\ref{pr:sep-Wilson-symplectic}.
Theorem \ref{th:sep-Wilson-Rd} is a generalization of Theorem \ref{th:Wilson-Rd}. However, in order to improve readability and understanding we keep the proof of Theorem \ref{th:Wilson-Rd} as a model case for the more technical Theorem \ref{th:sep-Wilson-Rd}.

\section{Wilson systems and symplectic matrices} 
\label{sec:wils-syst-sympl}
In this section we collect some facts about symplectic matrices, which are needed for the proof of Corollary \ref{cor:symplectic-wilson} and Proposition \ref{pr:sep-Wilson-symplectic}. 

But first, we recall the most general
known result concerning univariate Wilson bases due to Kutyniok and Strohmer~\cite{MR2191772}. Similar results can be found in the paper by Wojdy{\l}{\l}o \cite{wo08-1}. We point out that the lattice used to define the tight Gabor frame in Theorem~\ref{th:Wilson-known}  is the image of $\Z^2$ under a symplectic matrix. 

\begin{theorem}[\cite{MR2191772}] \label{th:Wilson-known} Let $a,b,c>0$ and $g\in L^{2}(\R)$ be given. 
If $ab=1/2$, $\hat{g}(\omega) e^{2\pi i c \omega^2/b} = \overline{\hat{g}(\omega)}$ and $\Vert g \Vert_2 = 1$, then the following assertions are equivalent: 
\begin{enumerate}[(i)]
\item The Gabor system 
$ \{T_{na+mc}M_{mb}g\}_{m,n\in\Z} 
$
is a tight frame for $L^{2}(\R)$.
\item The Wilson system
\begin{align*}
\mathcal{W}(g) = \{T_{2na}g\}_{n\in\Z} & \cup \{\tfrac{1}{\sqrt{2}} T_{2na} (T_{mc}M_{mb}+(-1)^m T_{-mc}M_{-mb})g\}_{n\in\Z,m\in \N} \\
& \cup \{\tfrac{1}{\sqrt{2}} T_{2na}T_{a} (T_{mc}M_{mb}-(-1)^m T_{-mc}M_{-mb})g\}_{n\in\Z,m\in \N}
\end{align*} 
is an orthonormal basis for $L^2(\R)$.
\end{enumerate}
\end{theorem}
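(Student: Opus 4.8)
The plan is to reduce the equivalence to the theorem of Daubechies, Jaffard and Journ\'e (Theorem~\ref{wilson-basis-dau}) by composing two unitary operators on $L^{2}(\R)$: a chirp (metaplectic) operator that removes the shear parameter $c$, followed by a dilation that normalises $a$ and $b$. Both operators map the relevant Gabor and Wilson systems onto one another up to unimodular constants, and since being a tight frame or an orthonormal basis is preserved by unitary maps and by multiplying the elements of a system by unimodular scalars, the equivalence (i)~$\Leftrightarrow$~(ii) transports along the reduction.

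First I would introduce the chirp operator $U$ defined on the Fourier side by $\widehat{Uf}(\omega)=e^{\pi i c\omega^{2}/b}\,\hat{f}(\omega)$. It is unitary, commutes with every translation, and a direct computation on the Fourier side yields the conjugation rule
\[
 U\,M_{\gamma}\,U^{-1}=e^{-\pi i c\gamma^{2}/b}\,T_{-c\gamma/b}\,M_{\gamma},\qquad \gamma\in\R .
\]
Set $\tilde{g}=Ug$; then $\Vert\tilde{g}\Vert_{2}=\Vert g\Vert_{2}=1$, and the hypothesis $\hat{g}(\omega)e^{2\pi i c\omega^{2}/b}=\overline{\hat{g}(\omega)}$ is exactly what makes $\hat{\tilde{g}}(\omega)=e^{\pi i c\omega^{2}/b}\hat{g}(\omega)$ real-valued. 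Feeding the conjugation rule into $T_{na+mc}M_{mb}$, the chirp rate $c/b$ is precisely the one for which the shear $mc$ is cancelled by the induced translation $-(c/b)(mb)=-mc$, so that $U\bigl(T_{na+mc}M_{mb}g\bigr)=e^{-\pi i m^{2}bc}\,T_{na}M_{mb}\tilde{g}$. Running the same computation on $T_{\pm mc}M_{\pm mb}g$ shows that \emph{both} time-frequency shifts comprising a single Wilson function acquire the \emph{same} unimodular factor $e^{-\pi i m^{2}bc}$; hence $U$ carries every $\pm$-combination of $\mathcal{W}(g)$ with parameters $(a,b,c)$ onto the corresponding combination for the window $\tilde{g}$ and parameters $(a,b,0)$, up to unimodular constants. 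Thus (i) and (ii) hold for $(g,a,b,c)$ if and only if they hold for $(\tilde{g},a,b,0)$.

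Next I would apply the dilation $D_{r}f(x)=\sqrt{r}\,f(rx)$ with $r=2a$, using $D_{r}T_{\lambda}D_{r}^{-1}=T_{\lambda/r}$, $D_{r}M_{\gamma}D_{r}^{-1}=M_{r\gamma}$ and the identity $2ab=1$ coming from $ab=1/2$. Then $\{T_{na}M_{mb}\tilde{g}\}_{m,n\in\Z}$ is mapped, up to the phases $(-1)^{nm}$, onto $\{M_{m}T_{n/2}g'\}_{m,n\in\Z}$ with $g'=D_{2a}\tilde{g}$, while the $c=0$ Wilson system $\mathcal{W}(\tilde{g})$ is mapped onto
\[
 \{T_{n}g'\}_{n\in\Z}\cup\bigl\{\tfrac{1}{\sqrt{2}}T_{n}(M_{m}+(-1)^{m}M_{-m})g'\bigr\}_{n\in\Z,m\in\N}\cup\bigl\{\tfrac{1}{\sqrt{2}}T_{n}T_{1/2}(M_{m}-(-1)^{m}M_{-m})g'\bigr\}_{n\in\Z,m\in\N},
\]
which is exactly the Daubechies--Jaffard--Journ\'e system $\mathcal{W}(g')$. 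Since the Fourier transform of $g'$ is still real-valued and $\Vert g'\Vert_{2}=1$, Theorem~\ref{wilson-basis-dau} applies to $g'$ and yields the equivalence of (i) and (ii) for the parameters $(g',\tfrac{1}{2},1,0)$. Composing the two reductions completes the proof.

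The main obstacle is the bookkeeping of the phase factors through the metaplectic conjugation: one must check not merely that $U$ turns the sheared lattice $\{(na+mc,\,mb)\}$ into the rectangular lattice $\{(na,\,mb)\}$, but, crucially, that within each Wilson function the two shifts $T_{mc}M_{mb}g$ and $T_{-mc}M_{-mb}g$ pick up \emph{identical} unimodular constants; otherwise the pattern of constructive and destructive interference carried by the signs $(-1)^{m}$ would be destroyed and $U$ would not map Wilson systems to Wilson systems. This forces the chirp rate to be exactly $c/b$, which is in turn precisely the value for which the quadratic-phase hypothesis on $\hat{g}$ renders $\hat{\tilde{g}}$ real-valued; once this is verified, the dilation reduction and the appeal to Theorem~\ref{wilson-basis-dau} are routine.
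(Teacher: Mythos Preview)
Your proposal is correct and follows essentially the same route as the paper. The paper derives Theorem~\ref{th:Wilson-known} (in the paragraph after Corollary~\ref{cor:symplectic-wilson}) by applying the metaplectic operator $\mu(A)=D_{1/2a}\circ\mathcal{F}\circ S_{-c/2a}\circ\mathcal{F}^{-1}$ associated with the symplectic matrix $A=\left[\begin{smallmatrix}2a & c\\ 0 & 1/2a\end{smallmatrix}\right]$ and invoking Theorem~\ref{th:Wilson-Rd}(iii); this is precisely your Fourier-side chirp followed by a dilation, and your key observation that the two shifts $T_{\pm mc}M_{\pm mb}$ in a single Wilson element acquire the \emph{same} phase is exactly the content of Lemma~\ref{le:symplectic-phase-reflection}. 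The only difference is packaging: you carry out the conjugation identities by hand and reduce to Theorem~\ref{wilson-basis-dau}, whereas the paper invokes the abstract symplectic machinery of Lemmas~\ref{le:symplectic-phase-reflection}--\ref{le:symplectic-Wilson-extension} and reduces to its own Theorem~\ref{th:Wilson-Rd}.
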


In Theorem~\ref{th:Wilson-known} it is a slight abuse of language to speak of
$\{T_{na+mc}M_{mb}g\}_{m,n\in\Z} $ as a 
Gabor system; however, since it is unitarily equivalent with the Gabor
system $\{M_{mb}T_{na+mc}g\}_{m,n\in\Z}$, these systems share all
frame theoretic properties, and we will not make any distinction
between such systems in the remainder of this paper.


In addition to the translation operator $T_{\lambda}$ and the modulation
operator $M_{\gamma}$ introduced in Section~\ref{sec:introduction}, we define the following
operators on $L^{2}(\R^d)$.
For $C\in \text{GL}_{\R}(d)$, we define the  \emph{dilation} by $C$: \[ D_{C}f(x) = \abs{\det{C}}^{1/2} f(Cx).\]
For a real-valued, symmetric, $d\times d$ matrix $M$, we define the \emph{chirp-multiplication} by $M$:
\[ S_{M}f(x) = \myexp{\pi i \langle x, Mx\rangle} f(x).\]
The \emph{Fourier transform} is defined for $f\in L^{1}(\R^d)\cap L^{2}(\R^d)$ by 
\[ \mathcal{F}f(\omega) = \int_{\R^d} f(x) \,  e^{-2\pi i \langle \omega,x \rangle } \, dx,\] 
which extends to all of  $L^{2}(\R^d)$ by density.
One readily shows that all the mentioned operators are unitary operators on $L^{2}(\R^d)$ with
\begin{align*}
& (T_{a})^{-1} = (T_{a})^{*} = T_{-a} \, , \ \ 
& (M_{b})^{-1} =  (M_{b})^{*} = M_{-b} \, , \\ 
& (D_{C})^{-1} = (D_{C})^{*} = D_{C^{-1}} \, , \ \
& (S_{M})^{-1} = (S_{M})^{*} = S_{-M} 
\end{align*}
and $\mathcal{F}^{-1}f(\omega) = \mathcal{F}^*f(\omega) = \mathcal{F}f(-\omega)$ for $f\in L^{2}(\R^d)$.

For $\nu = ( \lambda,\gamma) \in \R^d\times \R^d$, we let $\pi(\nu)$ denote the time-frequency shift
operator $M_{\gamma}T_{\lambda}$. 
It is clear that $\pi(\nu)$ is a unitary operator on $L^2(\R^d)$. 

The Fourier transform, dilation operator and chirp-multiplication operator intertwine with a time-frequency shift $\pi(\nu)$, $\nu\in \R^{d} \times {\R}^{d}$ in the following way:
\begin{align}
\mathcal{F} \, \pi(\nu) & = e^{2\pi i \langle \lambda,\gamma\rangle} \, \pi\big( \big[ \begin{smallmatrix}
0 & I \\ -I & 0 \end{smallmatrix} \big] \nu\big) \, \mathcal{F},
\label{eq:F-transform-timeshift-com} 
\\  
D_{C} \, \pi(\nu) & = \pi\big( \big[ \begin{smallmatrix}
C^{-1} & 0 \\ 0 & C^{\top} \end{smallmatrix} \big] \nu\big) \, D_{C},
 \label{eq:dilation-timeshift-com}
\\
S_{M} \, \pi(\nu) & = e^{-\pi i \langle \lambda,M\lambda\rangle} \, \pi\big( \big[ \begin{smallmatrix}
I & 0 \\ M & I \end{smallmatrix} \big] \nu\big) \, S_{M}. \label{eq:chirp-timeshift-com}
\end{align}
Because of these relations we associate to the Fourier transform,
dilation and chirp multiplication operator the following $2d\times 2d$-matrices:
\begin{equation} \label{eq:associated-matrices} \mathcal{F} \ \longleftrightarrow \ \begin{bmatrix}
0 & I \\ -I & 0 \end{bmatrix}, \ \ D_{C} \ \longleftrightarrow \ \begin{bmatrix}
C^{-1} & 0 \\ 0 & C^{\top}
\end{bmatrix}, \ \ S_{M} \ \longleftrightarrow \ 
\begin{bmatrix}
I & 0 \\
M & I
\end{bmatrix},
\end{equation}
where $I$ is the $d\times d$ identity matrix. The three matrices in \eqref{eq:associated-matrices} play an important role in the theory of symplectic matrices:
\begin{definition} A matrix $A\in \text{GL}_{\R}(2d)$ is a symplectic matrix, if
\[ A^{\top} J A = J, \ \ \text{with} \ J = \begin{bmatrix}
0 & I \\ -I & 0 \end{bmatrix}\]
and $I$ being the $d$-dimensional identity matrix. The set of all symplectic matrices is denoted by $\text{Sp}(d)$.
\end{definition}

\begin{theorem}[\cite{MR2827662,MR983366}] All symplectic matrices can be written as a (non-unique) finite composition of matrices of the form as in \eqref{eq:associated-matrices}.
\end{theorem}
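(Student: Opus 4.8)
The plan is to show that the subgroup $G\le\text{Sp}(d)$ generated by the three matrices in \eqref{eq:associated-matrices} equals $\text{Sp}(d)$; since those matrices are exactly the ones attached to $\cF$, $D_C$ and $S_M$, this is the assertion. I would begin with the easy observations: each of the three matrices satisfies $A^\top J A = J$ (so $G\subseteq\text{Sp}(d)$); $J^2 = -I_{2d}$ is the dilation matrix for $C=-I$, so $J^{-1}=J^3\in G$; and consequently the ``upper shears'' $\bigl[\begin{smallmatrix} I & M \\ 0 & I\end{smallmatrix}\bigr]$ with $M=M^\top$ lie in $G$, being the conjugates $J\bigl[\begin{smallmatrix} I & 0 \\ -M & I\end{smallmatrix}\bigr]J^{-1}$ of chirp matrices.

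The core of the argument is a Bruhat-type factorisation. First I would treat a symplectic matrix $S=\bigl[\begin{smallmatrix} A & B \\ C & D\end{smallmatrix}\bigr]$ whose upper-right block $B$ is invertible. Using the identities $A^\top D - C^\top B = I$, $B^\top D = D^\top B$ and $AB^\top = BA^\top$ that hold for symplectic matrices, one checks that $DB^{-1}$ and $B^{-1}A$ are symmetric and that
\[
S = \begin{bmatrix} I & 0 \\ DB^{-1} & I\end{bmatrix}\begin{bmatrix} B & 0 \\ 0 & (B^{-1})^\top\end{bmatrix}\begin{bmatrix} 0 & I \\ -I & 0\end{bmatrix}\begin{bmatrix} I & 0 \\ B^{-1}A & I\end{bmatrix},
\]
which is confirmed by multiplying out the right-hand side. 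The four factors are, in order, a chirp matrix, the dilation matrix for $C=B^{-1}$, $J$, and a chirp matrix, so every symplectic matrix with invertible upper-right block belongs to $G$.

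It remains to reduce the general case to this one. Right-multiplying $S=\bigl[\begin{smallmatrix} A & B \\ C & D\end{smallmatrix}\bigr]$ by an upper shear $\bigl[\begin{smallmatrix} I & M \\ 0 & I\end{smallmatrix}\bigr]$ changes its upper-right block to $AM+B$, so it is enough to find a symmetric $M$ with $AM+B$ invertible: then $S\bigl[\begin{smallmatrix} I & M \\ 0 & I\end{smallmatrix}\bigr]\in G$ by the previous step, hence $S\in G$. I expect this existence statement to be the main obstacle, as a direct rank count gets awkward when $A$ is rank-deficient; the clean route is geometric. Invertibility of $AM+B$ means precisely that $S$ maps the Lagrangian subspace $\im\bigl[\begin{smallmatrix} M \\ I\end{smallmatrix}\bigr]$ to a Lagrangian transverse to $\{0\}\times\R^d$, equivalently that $\im\bigl[\begin{smallmatrix} M \\ I\end{smallmatrix}\bigr]$ is transverse to the fixed Lagrangian $S^{-1}(\{0\}\times\R^d)$. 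As $M$ runs over the symmetric matrices these subspaces exhaust the Lagrangians transverse to $\R^d\times\{0\}$, and since for any Lagrangian the set of Lagrangians transverse to it is open and dense in the Lagrangian Grassmannian, two such open dense sets intersect and yield the desired $M$. (If $A$ happens to be invertible one can bypass this and take $M = I - A^{-1}B$, which is symmetric, with $AM+B = A$.)

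Combining the two steps gives $G=\text{Sp}(d)$, which proves the theorem.
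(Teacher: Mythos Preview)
The paper does not give its own proof of this theorem; it is quoted as a known result from the references (de~Gosson and Folland) and immediately followed by Example~2.3, which lists explicit factorisations under the side assumption that one of the $d\times d$ blocks $K,L,Q,R$ is invertible. Your argument is correct and in fact your key factorisation for the case ``$B$ invertible'' is exactly the paper's Example~2.3(ii) (with $K,L,Q,R$ relabelled $A,B,C,D$). What you add beyond the paper is the reduction step: right-multiplying by an upper shear to force the upper-right block to become invertible, which you justify via a transversality argument in the Lagrangian Grassmannian. That step is sound---the Lagrangians transverse to a fixed Lagrangian form an open dense set, and the intersection of two dense open sets is again dense, hence nonempty---and it cleanly disposes of the case the paper explicitly flags as not covered by Example~2.3 (symplectic matrices all four of whose blocks are singular). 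So your proof is complete and self-contained, whereas the paper defers the full statement to the literature and only records the block-invertible special cases it actually needs.
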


We have that $\text{Sp}(1) = \text{SL}_{\R}(2)$, while  for $d\ge 2$ the symplectic matrices $\text{Sp}(d)$ are a proper subgroup of $\text{SL}_{\R}(2d)$. It is advantageous to write symplectic matrices as block matrices of the form
\[ A = \begin{bmatrix}
K & L \\ Q & R \end{bmatrix}. \]
where $K,L,Q$ and $R$ are real valued, $d\times d$ matrices. One can show that the following statements are equivalent:
\begin{enumerate}[(i)]
\item $A = \begin{bmatrix}
K & L \\ Q & R \end{bmatrix}\in \text{Sp}(d)$,
\item $K^{\top}Q$ and $L^{\top}R$ are symmetric matrices and $K^{\top}R-Q^{\top}L = I$,
\item $KL^{\top}$ and $QR^{\top}$ are symmetric matrices and $KR^{\top}-LQ^{\top} = I$.
\end{enumerate}

 We mention the following important decompositions of symplectic matrices into products of matrices of the form as in \eqref{eq:associated-matrices}.

\begin{example} \label{ex:Sp-decomposition} Let $A = \begin{bmatrix}
K & L \\ Q & R \end{bmatrix} \in \text{Sp}(d)$. 
\begin{enumerate}[(i)]
\item If $K\in \text{GL}_{\R}(d)$, then 
\[ A = \begin{bmatrix}
I & 0 \\ QK^{-1} & I \end{bmatrix} \begin{bmatrix}
K & 0 \\ 0 & (K^{\top})^{-1} \end{bmatrix} \begin{bmatrix}
0 & I \\ -I & 0 \end{bmatrix} \begin{bmatrix}
I & 0 \\ -K^{-1}L & I \end{bmatrix} \begin{bmatrix}
0 & -I \\ I & 0 \end{bmatrix}.\]
\item If $L\in \text{GL}_{\R}(d)$, then
\[ A = \begin{bmatrix}
I & 0 \\ RL^{-1} & I \end{bmatrix} \begin{bmatrix}
L & 0 \\ 0 & (L^{\top})^{-1} \end{bmatrix} \begin{bmatrix}
0 & I \\ -I & 0 \end{bmatrix}\begin{bmatrix}
I & 0 \\ L^{-1}K & I \end{bmatrix}.\]
\item If $Q \in \text{GL}_{\R}(d)$, then 
\[ A= \begin{bmatrix}
0 & -I \\ I & 0 \end{bmatrix}\begin{bmatrix}
I & 0 \\ -KQ^{-1} & I \end{bmatrix} \begin{bmatrix}
Q & 0 \\ 0 & (C^{\top})^{-1} \end{bmatrix} \begin{bmatrix}
0 & I \\ -I & 0 \end{bmatrix} \begin{bmatrix}
I & 0 \\ -Q^{-1}R & I \end{bmatrix} \begin{bmatrix}
0 & -I \\ I & 0 \end{bmatrix}.\]
\item If $R\in \text{GL}_{\R}(d)$, then 
\[ A = \begin{bmatrix}
0 & -I \\ I & 0 \end{bmatrix} \begin{bmatrix}
I & 0 \\ -LR^{-1} & I \end{bmatrix} \begin{bmatrix}
R & 0 \\ 0 & (R^{\top})^{-1} \end{bmatrix} \begin{bmatrix}
0 & I \\ -I & 0 \end{bmatrix}\begin{bmatrix}
I & 0 \\ R^{-1}Q & I \end{bmatrix}.\]
\end{enumerate}
\end{example}
Note that this list of examples does not cover all $A\in \text{Sp}(d)$ as there exist symplectic matrices for which each of their block component $K,L,Q,$ and $R$ has zero determinant. To each matrix $A$ in Example~\ref{ex:Sp-decomposition}, we associate a unitary operator via the relations in \eqref{eq:associated-matrices}.

\begin{example} \label{ex:Sp-association} 
To $A = \begin{bmatrix}
K & L \\ Q & R \end{bmatrix} \in \text{Sp}(d)$ we associate the following operators:
\begin{enumerate}[(i)]
\item If $K\in \text{GL}_{\R}(d)$, then 
\[ A \ \longleftrightarrow \ S_{QK^{-1}} \circ D_{K^{-1}} \circ \mathcal{F} \circ S_{-K^{-1}L} \circ \mathcal{F}^{-1}.\]
\item If $L\in \text{GL}_{\R}(d)$, then
\[ A \ \longleftrightarrow \  S_{RL^{-1}} \circ D_{L^{-1}} \circ \mathcal{F} \circ S_{L^{-1}K} .\]
\item If $Q \in \text{GL}_{\R}(d)$, then 
\[ A \ \longleftrightarrow \  \mathcal{F}^{-1} \circ S_{-KQ^{-1}} \circ D_{Q^{-1}} \circ \mathcal{F} \circ S_{-Q^{-1}R} \circ \mathcal{F}^{-1}.\]
\item If $R\in \text{GL}_{\R}(d)$, then 
\[ A \ \longleftrightarrow \ \mathcal{F}^{-1} \circ S_{-LR^{-1}} \circ D_{R^{-1}} \circ \mathcal{F} \circ S_{R^{-1}Q}. \]
\end{enumerate}
\end{example}

More generally,  given any matrix $A\in \text{Sp}(d)$  there exists  a unitary operator $\mu(A)$ acting on $L^{2}(\R^d)$ such that
\begin{equation} \label{eq:muA-and-pi} \mu(A) \pi(\nu) = \varphi(A,\nu) \cdot \pi(A \nu) \mu(A), \end{equation}
where $\varphi(A, \cdot )$ maps vectors $\nu\in\R^{2d}$ into the complex plane with $\vert \varphi \vert = 1$.
Moreover, $\mu(A)$ can be written as a composition of the Fourier transform, suitable dilations and chirp-multiplications. 
For $A\in \text{Sp}(d)$ as in Example~\ref{ex:Sp-decomposition} an operator $\mu(A)$ that satisfies \eqref{eq:muA-and-pi} is given by the associations as in Example~\ref{ex:Sp-association}. 

It is not generally true that there is  a unique operator $\mu(A)$  such that \eqref{eq:muA-and-pi} holds.  Indeed,  from Examples~\ref{ex:Sp-decomposition} and~\ref{ex:Sp-association}: if multiple block components of $A$ are invertible, then we have several choices of the decomposition of $A$ and several operators $\mu(A)$ that we can associate to $A$ so that \eqref{eq:muA-and-pi} holds.
There is a way to make the choice of $\mu(A)$ unique: one constructs
the so-called metaplectic double cover of the symplectic group. For
our results this is not of interest, and we refer to
\cite{MR2827662,MR983366} for more information on this. 
For our needs it is enough that given $A\in \text{Sp}(d)$ a unitary operator $\mu(A)$ exists such that \eqref{eq:muA-and-pi} holds. In specific examples one can use Examples~\ref{ex:Sp-decomposition} and~\ref{ex:Sp-association} to construct such $\mu(A)$. 

Using the relations between $A\in \text{Sp}(d)$, time-frequency shifts and the unitary operator $\mu(A)$ as expressed in \eqref{eq:muA-and-pi} one can show the following well-known results on Gabor systems:

\begin{lemma} \label{le:symplectic-Gabor-extension} Let $\Delta$ be a subset (e.g., a lattice) in $\R^{2d}$, $g\in L^{2}(\R^d)$ and $A\in \textnormal{Sp}(d)$. If $\mu(A)$ is a unitary operator acting on $L^{2}(\R^d)$ such that \eqref{eq:muA-and-pi} holds, then the Gabor system 
$ \{\pi(\nu)g\}_{\nu\in \Delta}$ is a \emph{[frame, tight frame, Riesz basis, orthonormal basis]}, if and only if, the Gabor system $ \{\pi(A \nu) \mu(A) g\}_{\nu\in \Delta}$ is a \emph{[frame, tight frame, Riesz basis, orthonormal basis]}. Moreover, the \emph{[frame, Riesz]} bounds are preserved.
\end{lemma}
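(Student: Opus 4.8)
The plan is to transport the entire Gabor system $\set{\pi(\nu)g}_{\nu\in\Delta}$ through the unitary operator $\mu(A)$ and then discard the resulting unimodular phase factors, observing that neither step alters any of the four frame-theoretic properties or the associated bounds.

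First I would isolate two elementary stability facts. \textbf{(a)} If $U$ is a unitary operator on a Hilbert space $\cH$ and $\set{f_j}_{j\in\indexset{J}}$ is a family in $\cH$, then $\set{f_j}_{j}$ is a frame (resp.\ tight frame, Riesz basis, orthonormal basis) with bounds $a,b$ if and only if $\set{Uf_j}_{j}$ is, with the same bounds; for the (tight) frame case this is immediate from $\innerprods{Uf}{Uf_j}=\innerprods{f}{f_j}$ together with $\norm{Uf}=\norm{f}$, and for the Riesz/orthonormal basis case it follows because $U$ is a bounded bijection with bounded inverse that preserves inner products and completeness. \textbf{(b)} If $\set{c_j}_{j\in\indexset{J}}$ are scalars with $\abs{c_j}=1$ for every $j$, then $\set{f_j}_{j}$ has any one of the four properties, with given bounds, if and only if $\set{c_j f_j}_{j}$ does: for the (tight) frame case because $\abs{\innerprods{f}{c_j f_j}}=\abs{\innerprods{f}{f_j}}$, and for the Riesz/orthonormal basis case because $(a_j)_j\mapsto(c_j a_j)_j$ is a unitary bijection of $\ell^2(\indexset{J})$ onto itself, so the vectors $\sum_j a_j (c_j f_j)$ and $\sum_j a_j f_j$ sweep out exactly the same set as $(a_j)_j$ ranges over $\ell^2$, with identical coefficient norms, and completeness is again clearly unaffected.

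Then I would apply these with $U=\mu(A)$, which is unitary by hypothesis, and $f_\nu=\pi(\nu)g$ for $\nu\in\Delta$. Relation \eqref{eq:muA-and-pi} gives
\[ \mu(A)\pi(\nu)g=\varphi(A,\nu)\,\pi(A\nu)\mu(A)g,\qquad \nu\in\Delta,\]
with $\abs{\varphi(A,\nu)}=1$. By \textbf{(a)}, the system $\set{\pi(\nu)g}_{\nu\in\Delta}$ has a prescribed property with prescribed bounds if and only if $\set{\mu(A)\pi(\nu)g}_{\nu\in\Delta}$ does; by \textbf{(b)} with $c_\nu=\varphi(A,\nu)$, the latter holds if and only if $\set{\pi(A\nu)\mu(A)g}_{\nu\in\Delta}$ has that property with those bounds. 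Composing the two equivalences proves the lemma, including preservation of the frame and Riesz bounds. There is no genuine obstacle here; the only point requiring a little care is to phrase the Riesz-basis and orthonormal-basis statements in terms of the coefficient sequence space $\ell^2(\Delta)$, so that the reindexing by unimodular scalars is manifestly harmless, and to note in passing that $\Delta$ is not assumed to be a lattice, which plays no role in the argument.
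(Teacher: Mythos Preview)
Your argument is correct and is precisely the natural elaboration of what the paper intends: the paper does not give a proof of this lemma, stating only that it follows from the intertwining relation \eqref{eq:muA-and-pi}, and your two-step reduction (unitary image, then removal of unimodular phases) is exactly how one unpacks that remark.
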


We wish to extend Lemma~\ref{le:symplectic-Gabor-extension} to a more general class of systems which includes the Wilson systems that we consider in Theorem~\ref{th:Wilson-Rd}.
To this end we need the following result.
\begin{lemma} \label{le:symplectic-phase-reflection} Let $\nu\in \R^{2d}$ and $A\in \textnormal{Sp}(d)$ be given. If $\mu(A)$ satisfies \eqref{eq:muA-and-pi}, then
\[ \mu(A) \pi(-\nu) = \varphi(A,\nu) \cdot \pi(-A\nu)\mu(A).\]
That is, the phase factor $\varphi(A,\nu)$ in \eqref{eq:muA-and-pi} is invariant under the reflection $\nu\mapsto -\nu$ for all $\nu\in \R^{2d}$.
\end{lemma}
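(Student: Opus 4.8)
The plan is to reduce the statement to the three elementary intertwining relations \eqref{eq:F-transform-timeshift-com}, \eqref{eq:dilation-timeshift-com} and \eqref{eq:chirp-timeshift-com}. As noted in the lemma itself, the asserted identity is equivalent to the reflection symmetry $\varphi(A,-\nu)=\varphi(A,\nu)$: indeed, applying \eqref{eq:muA-and-pi} at $-\nu$ and using $A(-\nu)=-A\nu$ shows that $\mu(A)\pi(-\nu)=\varphi(A,-\nu)\,\pi(-A\nu)\mu(A)$, and since $\mu(A)$ and $\pi(-A\nu)$ are invertible the phase factor in \eqref{eq:muA-and-pi} is uniquely determined by $\mu(A)$ and $\nu$. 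Recall from the discussion following \eqref{eq:muA-and-pi} that $\mu(A)$ may be written as a finite composition $\mu(A)=U_n\circ U_{n-1}\circ\dots\circ U_1$, where each $U_j$ is the Fourier transform $\mathcal{F}$, a dilation $D_{C_j}$, or a chirp-multiplication $S_{M_j}$, and the associated matrices $B_j$ from \eqref{eq:associated-matrices} satisfy $A=B_n\cdots B_1$. It therefore suffices to establish reflection symmetry for each $U_j$ and to show that it propagates through the composition.

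First I would treat the three building blocks. Reading \eqref{eq:F-transform-timeshift-com}, \eqref{eq:dilation-timeshift-com} and \eqref{eq:chirp-timeshift-com} in the form $U_j\pi(\nu)=\varphi_j(\nu)\,\pi(B_j\nu)\,U_j$, the phase factors are, with $\nu=(\lambda,\gamma)$, respectively $e^{2\pi i\langle\lambda,\gamma\rangle}$, the constant $1$, and $e^{-\pi i\langle\lambda,M_j\lambda\rangle}$. Each of these is a constant or a quadratic form in the components of $\nu$, hence unchanged under $\nu\mapsto-\nu$; that is, $\varphi_j(-\nu)=\varphi_j(\nu)$ for all $\nu\in\R^{2d}$.

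Next I would iterate. Moving $\pi(\nu)$ successively past $U_1,U_2,\dots,U_n$ yields $\mu(A)\pi(\nu)=\bigl(\prod_{j=1}^{n}\varphi_j(B_{j-1}\cdots B_1\nu)\bigr)\,\pi(A\nu)\,\mu(A)$, where the matrix product $B_{j-1}\cdots B_1$ is interpreted as the identity when $j=1$. By the uniqueness noted above, this means $\varphi(A,\nu)=\prod_{j=1}^{n}\varphi_j(B_{j-1}\cdots B_1\nu)$. Replacing $\nu$ by $-\nu$, linearity of each $B_{j-1}\cdots B_1$ gives $B_{j-1}\cdots B_1(-\nu)=-B_{j-1}\cdots B_1\nu$, and the reflection symmetry of each $\varphi_j$ then gives $\varphi(A,-\nu)=\prod_{j=1}^{n}\varphi_j(-B_{j-1}\cdots B_1\nu)=\prod_{j=1}^{n}\varphi_j(B_{j-1}\cdots B_1\nu)=\varphi(A,\nu)$, which is the claim. (Equivalently, one may phrase this last step as an induction on $n$ with inductive hypothesis that $U_n\circ\cdots\circ U_1$ satisfies \eqref{eq:muA-and-pi} with a reflection-even phase factor.)

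The argument is essentially bookkeeping, and the only point requiring a moment's care is the legitimacy of computing $\varphi(A,\cdot)$ from one chosen decomposition of $\mu(A)$: since \eqref{eq:muA-and-pi} pins down $\varphi(A,\nu)$ once $\mu(A)$ is fixed, the value obtained from any admissible decomposition is the correct one, and the reflection symmetry of the building-block phases is exactly what makes the product over $j$ insensitive to the sign of $\nu$.
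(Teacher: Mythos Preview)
Your proof is correct and follows the same approach as the paper: both reduce to the three generating operators $\mathcal{F}$, $D_C$, $S_M$ and observe that their phase factors from \eqref{eq:F-transform-timeshift-com}--\eqref{eq:chirp-timeshift-com} are quadratic in $\nu$ and hence reflection-invariant. You are simply more explicit than the paper about the composition step, writing out the iterated phase product and verifying its reflection symmetry, whereas the paper leaves this implicit in the phrase ``it is sufficient to prove the result for these three operators.''
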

\begin{proof} As $\mu(A)$ can be written as a composition of the Fourier transform, dilations and chirp-mulitplication it is sufficient to prove the result for these three operators. Indeed, for $C\in \text{GL}_{\R}(d)$ and $M\in \text{Sym}_{\R}(d)$ we find from \eqref{eq:F-transform-timeshift-com}, \eqref{eq:dilation-timeshift-com} and \eqref{eq:chirp-timeshift-com} that
\begin{align*}
\mathcal{F} \, \pi(-\nu) & = e^{2\pi i \langle -\lambda,-\gamma\rangle} \, \pi\big({\textstyle -}\big[ \begin{smallmatrix}
0 & I \\ -I & 0 \end{smallmatrix} \big] \nu \big) \, \mathcal{F} = e^{2\pi i \langle \lambda,\gamma\rangle} \, \pi\big({\textstyle -}\big[ \begin{smallmatrix}
0 & I \\ -I & 0 \end{smallmatrix} \big] \nu \big) \, \mathcal{F},
\\  
D_{C} \, \pi(-\nu) & = \pi\big({\textstyle -}\big[ \begin{smallmatrix}
C^{-1} & 0 \\ 0 & C^{\top} \end{smallmatrix} \big] \nu \big) \, D_{C},
\\
S_{M} \, \pi(-\nu) & = e^{-\pi i \langle -\lambda,-M\lambda\rangle} \, \pi\big({\textstyle -}\big[ \begin{smallmatrix}
I & 0 \\ M & I \end{smallmatrix} \big] \nu \big) \, S_{M} = e^{-\pi i \langle \lambda,M\lambda\rangle} \, \pi\big({\textstyle -}\big[ \begin{smallmatrix}
I & 0 \\ M & I \end{smallmatrix} \big] \nu \big) \, S_{M}. 
\end{align*}
In particular, this shows that
\begin{align*}
& \varphi\big( \big[ \begin{smallmatrix}
0 & I \\ -I & 0 \end{smallmatrix} \big], \nu \big)  = \varphi\big( \big[ \begin{smallmatrix}
0 & I \\ -I & 0 \end{smallmatrix} \big], -\nu \big) = e^{2\pi i \langle \lambda,\gamma\rangle}, \\
& \varphi\big( \big[ \begin{smallmatrix}
C^{-1} & 0 \\ 0 & C^{\top} \end{smallmatrix} \big], \nu \big)  = \varphi\big( \big[ \begin{smallmatrix}
C^{-1} & 0 \\ 0 & C^{\top} \end{smallmatrix} \big], - \nu \big) = 1, \\
& \varphi\big( \big[ \begin{smallmatrix}
I & 0 \\ M & I \end{smallmatrix} \big], \nu \big) = \varphi\big( \big[ \begin{smallmatrix}
I & 0 \\ M & I \end{smallmatrix} \big], -\nu \big) = e^{-\pi i \langle \lambda,M\lambda\rangle }.
\end{align*}

\end{proof}

We now immediately have the following extension of Lemma~\ref{le:symplectic-Gabor-extension}:

\begin{lemma} \label{le:symplectic-Wilson-extension}  Let $J$ be an
  index set. For each $j\in J$, let $\Delta_j$ be a subset (e.g., a
  lattice) in $\R^{2d}$ and let $g_j\in L^{2}(\R^d)$. Moreover take
  $A\in \textnormal{Sp}(d)$ and let
  $\{c_{\nu,j}\}_{\nu\in\Delta_{j},j\in J}$ and
  $\{d_{\nu,j}\}_{\nu\in\Delta_{j},j\in J}$ be sequences in
  $\mathbb{C}$. Suppose that $\mu(A)$ is a unitary operator acting on
  $L^{2}(\R^d)$ such that \eqref{eq:muA-and-pi} holds. Then the system 
\[ \bigcup_{j\in J}\big\{\big(c_{\nu,j} \pi(\nu) + d_{\nu,j} \pi(-\nu)\big)g_j\big\}_{\nu\in \Delta_j}\] 
is a \emph{[frame, tight frame, Riesz basis, orthonormal basis]}, if and only if, the system 
\[ \bigcup_{j\in J}\big\{\big(c_{\nu,j} \pi(A\nu) + d_{\nu,j} \pi(-A\nu)\big) \mu(A) g_j\big\}_{\nu\in \Delta_j}\] is a \emph{[frame, tight frame, Riesz basis, orthonormal basis]}. Moreover, the \emph{[frame, Riesz]} bounds are preserved.
\end{lemma}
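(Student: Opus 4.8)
The plan is to show that the unitary operator $\mu(A)$ maps the first system onto the second one up to multiplication of each vector by a unimodular scalar, and then to appeal to the fact that neither of these two operations affects any of the frame-theoretic properties in question nor the associated bounds.

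Concretely, fix $j\in J$ and $\nu\in\Delta_j$ and set $\phi_{\nu,j}=\big(c_{\nu,j}\pi(\nu)+d_{\nu,j}\pi(-\nu)\big)g_j$. By linearity of $\mu(A)$, the intertwining relation \eqref{eq:muA-and-pi} applied to $\pi(\nu)$, and Lemma~\ref{le:symplectic-phase-reflection} applied to $\pi(-\nu)$ — both of which produce the \emph{same} phase factor $\varphi(A,\nu)$ — one obtains
\[
\mu(A)\phi_{\nu,j}=\varphi(A,\nu)\big(c_{\nu,j}\pi(A\nu)+d_{\nu,j}\pi(-A\nu)\big)\mu(A)g_j .
\]
Writing $\psi_{\nu,j}$ for the $(\nu,j)$-th element of the second system and using $\abs{\varphi(A,\nu)}=1$, this reads $\psi_{\nu,j}=\overline{\varphi(A,\nu)}\,\mu(A)\phi_{\nu,j}$. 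Thus $(\nu,j)\mapsto(\nu,j)$ is an explicit index-preserving bijection between the two systems, realized by first applying the unitary $\mu(A)$ and then rescaling each vector by a unimodular constant.

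It then suffices to recall two elementary facts about systems of vectors in a Hilbert space. First, if $U$ is unitary, then $\{U\phi_i\}_i$ is a frame, tight frame, Riesz basis, or orthonormal basis with prescribed bounds if and only if $\{\phi_i\}_i$ is, because $\innerprods{f}{U\phi_i}=\innerprods{U^*f}{\phi_i}$ and $U^*$ is a bijective isometry. Second, replacing each $\phi_i$ by $\theta_i\phi_i$ with $\abs{\theta_i}=1$ changes none of these properties nor the bounds, since $\abs{\innerprods{f}{\theta_i\phi_i}}^2=\abs{\innerprods{f}{\phi_i}}^2$ leaves the frame operator (hence the frame inequalities) untouched, and the coefficient map $(a_i)\mapsto(a_i\overline{\theta_i})$ is a unitary of $\ell^2$ which handles the Riesz-basis and orthonormal-basis characterizations. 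Applying both facts with $U=\mu(A)$ and $\theta_{\nu,j}=\overline{\varphi(A,\nu)}$ yields the claimed equivalence together with the preservation of the frame and Riesz bounds.

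I do not expect a genuine obstacle: this is the natural generalization of (the proof of) Lemma~\ref{le:symplectic-Gabor-extension}, which is the special case $c_{\nu,j}\equiv 1$, $d_{\nu,j}\equiv 0$ with a single index $j$; the only new ingredient is Lemma~\ref{le:symplectic-phase-reflection}, whose role is precisely to guarantee that the phases $\varphi(A,\nu)$ attached to $\pi(\nu)$ and to $\pi(-\nu)$ coincide, so that this common factor can be pulled out of the linear combination $c_{\nu,j}\pi(\nu)+d_{\nu,j}\pi(-\nu)$. The only point requiring a little care is bookkeeping: one treats the disjoint union $\{(\nu,j):j\in J,\ \nu\in\Delta_j\}$ as a single index set so that the correspondence $\phi_{\nu,j}\leftrightarrow\psi_{\nu,j}$ is literally a bijection of indexed families.
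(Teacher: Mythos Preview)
Your proof is correct and is exactly the argument the paper has in mind: the paper does not give a proof at all, merely stating that the lemma follows ``immediately'' from Lemma~\ref{le:symplectic-phase-reflection}, and what you have written is precisely the unpacking of that word.
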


\section{Bimodular Wilson systems in higher dimensions}\label{wilson-red2}
In this section we consider bimodular Wilson orthonormal bases for $L^2(\R^d)$ that are generated by non-separable
functions $g$. Our main result in this section is
Theorem~\ref{th:Wilson-Rd} stated below. We use boldface
$\mathbf{\frac12}$ to denote the constant vector $(1/2,\dots,1/2) \in
\R^d$, by $\mathbf{\frac12}+\Z^d$ we understand the set
$\setprop{\mathbf{\frac12}+z}{z\in \Z^d}$, and we define  $(-1)^{\vert n \vert} =
(-1)^{n_1+n_2+\ldots+n_d}$ for  vectors $n\in \Z^d$.

%
\begin{theorem} \label{th:Wilson-Rd} Let $g$ be a function in $L^2(\R^d)$ and let
$N$ be a subset of $\Z^d$ such that $N\cap (-N) = \emptyset$ and $N\cup (-N)\cup \{0\} = \Z^d$. 
Consider the Gabor system 
\[ \mathcal{G}(g) = \{ T_{\lambda}M_{\gamma}g\}_{\lambda\in\Z^d\cup (\mathbf{1/2}+\Z^d),\gamma\in\Z^d}\]
and the Wilson system
\begin{align*}
\mathcal{W}(g) =  \{T_{\lambda} g \}_{\lambda\in\Z^d} & \cup \{\tfrac{1}{\sqrt{2}} T_{\lambda}(M_{\gamma}+(-1)^{\vert \gamma \vert} M_{-\gamma})g\}_{\lambda\in\Z^d, \gamma\in N} \\
& \cup \{\tfrac{1}{\sqrt{2}} T_{\lambda+\mathbf{\tfrac{1}{2}}}(M_{\gamma}-(-1)^{\vert \gamma \vert} M_{-\gamma})g\}_{\lambda\in\Z^d,\gamma\in N}.
\end{align*} 
Suppose that $\hat{g}(\omega) = \overline{\hat{g}(\omega)}$. Then the following holds:
\begin{enumerate}[(i)]
\item
 \label{item:1}
 The Gabor system $\mathcal{G}(g)$ is a Bessel sequence with bound $b$ if and only if the Wilson system
  $\mathcal{W}(g)$ is a  Bessel sequence with bound $b/2$. In either (and hence both cases) the Gabor frame operator $S_{\mathcal{G}}$ and the Wilson frame operator $S_{\mathcal{W}}$ satisfy
\[ S_{\mathcal{G}} = 2 S_{\mathcal{W}}.\]
\item The Gabor system $\mathcal{G}(g)$ is a frame with bounds $2a$ and $2b$ for $L^{2}(\R^d)$ if and only if the Wilson system
  $\mathcal{W}(g)$ is a Riesz basis with bounds $a$ and $b$ for $L^{2}(\R^d)$. \label{item:2}
\item The Gabor system $\mathcal{G}(g)$ is a tight frame for $L^{2}(\R^{d})$ with frame bound $a=2$ if and
  only if the Wilson system $\mathcal{W}(g)$ is an orthonormal basis for $L^{2}(\R^{d})$. \label{item:3}
\end{enumerate}
\end{theorem}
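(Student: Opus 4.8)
The plan is to regard both $\mathcal{G}(g)$ and $\mathcal{W}(g)$ as shift-invariant systems with respect to the lattice $\Z^d$ and to fiberize. Indeed $\mathcal{G}(g)$ is the union of the $\Z^d$-translates of the generators $T_{\varepsilon}M_{\gamma}g$, $\varepsilon\in\{0,\mathbf{1/2}\}$, $\gamma\in\Z^d$, and $\mathcal{W}(g)$ the union of the $\Z^d$-translates of $g$, of $\tfrac1{\sqrt2}(M_{\gamma}+(-1)^{|\gamma|}M_{-\gamma})g$ and of $\tfrac1{\sqrt2}T_{\mathbf{1/2}}(M_{\gamma}-(-1)^{|\gamma|}M_{-\gamma})g$, $\gamma\in N$. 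Under the unitary fiberization $f\mapsto\bigl(\xi\mapsto(\hat f(\xi+k))_{k\in\Z^d}\bigr)$ from $L^{2}(\R^d)$ onto $\int^{\oplus}_{[0,1)^d}\ell^{2}(\Z^d)\,d\xi$, the characterizing-equation results for shift-invariant systems reduce the whole theorem to a fibrewise statement: a $\Z^d$-shift-invariant system is a Bessel sequence / frame / Riesz basis / orthonormal basis with prescribed bounds if and only if, for a.e.\ $\xi\in[0,1)^d$, its fiber system in $\ell^{2}(\Z^d)$ is one with the same bounds, and the global frame operator is carried onto the direct integral of the fiber frame operators (the global synthesis operator onto the direct integral of the fiber synthesis operators). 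Identifying $\ell^{2}(\Z^d)\cong L^{2}(\mathbb{T}^d)$ via Fourier series and writing $B_{\xi}(\omega)=\sum_{k\in\Z^d}\hat g(\xi+k)\,e^{-2\pi i\langle k,\omega\rangle}$ (a Zak transform of $\hat g$, in $L^{2}(\mathbb{T}^d)$ for a.e.\ $\xi$), one reads off from $\widehat{T_\lambda M_\gamma g}(\omega)=e^{-2\pi i\langle\lambda,\omega\rangle}\hat g(\omega-\gamma)$ that — up to per-vector unimodular factors, constant in $\omega$ and hence harmless for all four properties — the Gabor fiber at $\xi$ is the union of the $\Z^d$-modulation orbit of $B_{\xi}$ and of the $\Z^d$-modulation orbit of $B_{\xi}(\cdot+\mathbf{1/2})$, whereas the Wilson fiber at $\xi$ is $B_{\xi}$ together with the ``symmetric'' combinations $\tfrac1{\sqrt2}\bigl(e^{-2\pi i\langle\gamma,\cdot\rangle}+(-1)^{|\gamma|}e^{2\pi i\langle\gamma,\cdot\rangle}\bigr)B_{\xi}$ and the ``antisymmetric'' combinations $\tfrac1{\sqrt2}\bigl(e^{-2\pi i\langle\gamma,\cdot\rangle}-(-1)^{|\gamma|}e^{2\pi i\langle\gamma,\cdot\rangle}\bigr)B_{\xi}(\cdot+\mathbf{1/2})$, $\gamma\in N$. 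Here $N\cap(-N)=\emptyset$ and $N\cup(-N)\cup\{0\}=\Z^d$ are used to match indices, and the hypothesis $\hat g=\overline{\hat g}$ becomes the symmetry $\overline{B_{\xi}(\omega)}=B_{\xi}(-\omega)$.

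Next I would prove the fibrewise identity $S_{\mathcal G}(\xi)=2\,S_{\mathcal W}(\xi)$ for the fiber frame operators, for a.e.\ $\xi$. Because the Gabor fiber is a union of two $\Z^d$-modulation orbits, its fiber frame operator is multiplication by $|B_{\xi}(\omega)|^{2}+|B_{\xi}(\omega+\mathbf{1/2})|^{2}$, so $\sum_v|\langle X,v\rangle|^{2}=\int_{\mathbb{T}^d}|X(\omega)|^{2}\bigl(|B_{\xi}(\omega)|^{2}+|B_{\xi}(\omega+\mathbf{1/2})|^{2}\bigr)\,d\omega$ over the Gabor fiber. Expanding $\sum_v|\langle X,v\rangle|^{2}$ over the Wilson fiber and simplifying by Parseval on $\mathbb{T}^d$, one obtains exactly half of this diagonal term, the remaining cross terms contributing only the difference of the two integrals $\int_{\mathbb{T}^d}\overline{X(\omega)}\,X(-\omega-\mathbf{1/2})\,B_{\xi}(\omega)\,\overline{B_{\xi}(-\omega-\mathbf{1/2})}\,d\omega$ and $\int_{\mathbb{T}^d}\overline{X(\omega)}\,X(-\omega-\mathbf{1/2})\,B_{\xi}(\omega+\mathbf{1/2})\,\overline{B_{\xi}(-\omega)}\,d\omega$; by $\overline{B_{\xi}(\eta)}=B_{\xi}(-\eta)$ both equal $\int_{\mathbb{T}^d}\overline{X(\omega)}\,X(-\omega-\mathbf{1/2})\,B_{\xi}(\omega)\,B_{\xi}(\omega+\mathbf{1/2})\,d\omega$, so the cross terms cancel and $\sum_{v\in\text{Gabor fiber}}|\langle X,v\rangle|^{2}=2\sum_{v\in\text{Wilson fiber}}|\langle X,v\rangle|^{2}$ for every $X$. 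By polarization $S_{\mathcal G}(\xi)=2S_{\mathcal W}(\xi)$, and fiberizing back gives $S_{\mathcal G}=2S_{\mathcal W}$ and the Bessel-bound equivalence of part~(i). The spectral versions — $2aI\le S_{\mathcal G}(\xi)\le 2bI$ a.e.\ $\iff aI\le S_{\mathcal W}(\xi)\le bI$ a.e., and $S_{\mathcal G}(\xi)=2I$ a.e.\ $\iff S_{\mathcal W}(\xi)=I$ a.e.\ — then show that under the frame hypothesis (bounds $2a,2b$), resp.\ the tight-frame-with-bound-$2$ hypothesis, the Wilson fiber is a frame with bounds $a,b$, resp.\ a Parseval frame, for a.e.\ $\xi$.

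To complete parts (ii) and (iii) it remains to upgrade ``frame''/``Parseval frame'' of the Wilson fibers to ``Riesz basis''/``orthonormal basis''. Since the global Wilson synthesis operator is the direct integral of the fiber synthesis operators $T_W(\xi)$, and an injective synthesis operator turns a frame into a Riesz basis (and a Parseval frame into an orthonormal basis) with the same bounds, it suffices to show that $T_W(\xi)$ is injective for a.e.\ $\xi$. Factor $T_W(\xi)=T_G(\xi)\circ M(\xi)$, where $T_G(\xi)$ is the Gabor fiber synthesis operator and $M(\xi)$ is the evident \emph{isometric} embedding of the Wilson index set into the Gabor one dictated by the formulas above; it is enough that $T_G(\xi)$ be injective on $\operatorname{ran}M(\xi)$. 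Transported to $L^{2}(\mathbb{T}^d)$, an element of $\operatorname{ran}M(\xi)$ is a pair $(P,R)$ with $P(\omega)=P(-\omega-\mathbf{1/2})$ and $R(\omega)=-R(-\omega-\mathbf{1/2})$, sent by $T_G(\xi)$ to $P\,B_{\xi}+R\,B_{\xi}(\cdot+\mathbf{1/2})$. Substituting $\omega\mapsto-\omega-\mathbf{1/2}$ into $P\,B_{\xi}+R\,B_{\xi}(\cdot+\mathbf{1/2})=0$ and using the two symmetries together with $\overline{B_{\xi}(\eta)}=B_{\xi}(-\eta)$ produces the companion equation $P\,\overline{B_{\xi}(\cdot+\mathbf{1/2})}-R\,\overline{B_{\xi}}=0$; the determinant of this $2\times2$ linear system in $(P,R)$ is $-\bigl(|B_{\xi}(\omega)|^{2}+|B_{\xi}(\omega+\mathbf{1/2})|^{2}\bigr)$, which is bounded away from $0$ for a.e.\ $\omega$ precisely because $\mathcal{G}(g)$ has a strictly positive lower frame bound. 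Hence $P=R=0$ a.e., $T_W(\xi)$ is injective, and the forward implications of (ii) and (iii) follow; the converses are immediate since a Riesz basis is in particular a frame and an orthonormal basis a tight frame with bound $1$, to which the earlier steps apply.

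The main obstacle is the cross-term cancellation in the second step — the coincidence of the two displayed integrals — which is exactly where the hypothesis $\hat g=\overline{\hat g}$ (equivalently $\overline{B_{\xi}(\omega)}=B_{\xi}(-\omega)$) enters; without it the identity $S_{\mathcal G}=2S_{\mathcal W}$ fails. A secondary delicate point is the injectivity of $T_W(\xi)$, where it is crucial that $\mathcal{G}(g)$ have a \emph{strictly positive} lower frame bound rather than merely the Bessel bound: this is the mechanism that converts the Parseval/frame property of the Wilson fibers into the orthonormal-basis/Riesz-basis property — the ``density-type'' phenomenon alluded to in the introduction.
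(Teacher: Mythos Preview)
Your approach is correct and, while part~(i) is close in spirit to the paper's, parts~(ii) and~(iii) take a genuinely different route. For~(i) the paper also works through shift-invariant theory, but in the language of autocorrelation functions $t_{\alpha}$ (Definition~\ref{auto}) rather than your explicit Zak-type fiberization on $L^2(\mathbb{T}^d)$: it computes $t_{\alpha,\mathcal{W}}=\tfrac12 t_{\alpha,\mathcal{G}}$ for $\alpha\in\Lambda^\perp$ and $t_{\alpha,\mathcal{W}}=0$ for $\alpha\in\Z^d\setminus\Lambda^\perp$, and then invokes a weak (Walnut-type) representation of the frame operator on a dense subspace. The key cancellation in both arguments is the same substitution, appearing in the paper as $\gamma\mapsto -\gamma'+\alpha$ in the $t_\alpha$-sum and in yours as the equality of the two cross integrals via $\overline{B_\xi(\eta)}=B_\xi(-\eta)$. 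Where you genuinely diverge is in upgrading the Wilson frame to a Riesz/orthonormal basis. The paper first proves~(iii) by appealing to Wexler--Raz orthogonality (Lemma~\ref{le:wexraz}) to show that $\{M_{2\gamma}g\}_{\gamma\in\Z^d}$ is orthogonal, whence every Wilson generator has unit norm and the Parseval frame is an ONB; it then deduces~(ii) from~(iii) via the canonical tight frame $S^{-1/2}g$, using that $S^{-1/2}$ preserves the symmetry $\hat g=\overline{\hat g}$. Your argument instead proves fibrewise injectivity of the Wilson synthesis by the $2\times 2$ system with determinant $-(|B_\xi|^2+|B_\xi(\cdot+\mathbf{1/2})|^2)$, nonvanishing precisely by the lower Gabor frame bound. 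This handles~(ii) and~(iii) simultaneously, avoids both the Wexler--Raz lemma and the symmetry-preservation of $S^{-1/2}$, and makes transparent why a strictly positive lower Gabor bound is what forces the Wilson system from frame to Riesz basis. The paper's route, on the other hand, is more modular and generalizes verbatim to the separable-group setting of Theorem~\ref{th:sep-Wilson-Rd}.
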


    The simple relationship between frame operators of the Gabor system and the Wilson system in Theorem~\ref{th:Wilson-Rd}(\ref{item:1}) seems not have
    been noticed before in the literature, even in dimension one. Indeed, Auscher~\cite{MR1247517} proves a Walnut-type
    representation of an operator $R$ defined as $S_{\mathcal{G}} - 2 S_{\mathcal{W}}$, and
    Gr\"ochenig calls its commutator properties mysterious in \cite{MR1843717}. From Theorem~\ref{th:Wilson-Rd} it
    is now clear that $R$ is in fact the zero operator.  

Statement~(\ref{item:2}) of Theorem~\ref{th:Wilson-Rd} is less
surprising, however, it shows an interesting duality principle akin to
the duality principle of Gabor systems and their adjoint systems. The ``only if''-assertion in~\eqref{item:2} is
Corollary~8.5.6 in \cite{MR1843717} for $d=1$, albeit without bounds. Part~(\ref{item:3}) of Theorem~\ref{th:Wilson-Rd} generalizes
Theorem~\ref{wilson-basis-dau} to higher dimensions in a non-trivial
way. 

In the the following example we show that the standard construction procedure of
``nice'' generators $g$ of univariate Wilson bases, see e.g., \cite{MR1084973} carries over to
bimodular multivariate Wilson bases in Theorem~\ref{th:Wilson-Rd}(iii).
\begin{example}
\label{ex:thm27-nice-generators}
 Take $g \in L^2(\R^d)$ so that
$\mathcal{G}(g)=\{T_{\lambda}M_{\gamma}g\}_{\lambda\in\Z^d\cup (\mathbf{1/2}+\Z^d),\gamma\in\Z^d}$ is a
Bessel system.
If we consider
$\mathcal{G}(g)$ as a
critically sampled, multi-window Gabor system
$\{T_{\lambda}M_{\gamma}g_i\}_{\lambda,\gamma\in\Z^d,i=1,2}$ with two
generators $g_1=g$ and $g_2=(-1)^{\abs{\gamma}}g(\cdot-\mathbf{\tfrac{1}{2}})$, it
follows by \cite[Theorem 8.3.1]{MR1843717} that, for $\alpha \ge 0$,
\begin{equation}
  ZS^\alpha f(x,\omega) = \left(\abs{Zg(x,\omega)}^2 +
    \abs{Zg(x-\mathbf{\tfrac{1}{2}},\omega)}^2\right)^\alpha Zf(x,\omega),\label{eq:Zak-of-S}  
\end{equation}
where $Z$ and $S$ denote the Zak transform and the frame operator,
respectively. Here we tacitly used that $\{T_{\lambda}M_{\gamma}g_2\}_{\lambda,\gamma\in\Z^d}$ and
$\{ T_{\lambda}M_{\gamma}T_{\mathbf{1/2}}\,g\}_{\lambda,\gamma\in\Z^{d}}$ 
have identical frame
operators. If $\mathcal{G}(g)$ is a frame, i.e., if $S_{\mathcal{G}}$ is
invertible, then \eqref{eq:Zak-of-S} also holds for $\alpha<0$.

  From \eqref{eq:Zak-of-S} it is clear that for window functions $g$
  in the Wiener space $W(\R^d)$, the Gabor system
  $\{T_{\lambda}M_{\gamma}g\}_{\lambda\in\Z^d\cup (\mathbf{1/2}+\Z^d),\gamma\in\Z^d}$ is a frame precisely when
  \[
  \essinf_{x,\omega \in \itvco{0}{1}} \left(\abs{Zg(x,\omega)}^2 +
    \abs{Zg(x-\mathbf{\tfrac{1}{2}},\omega)}^2\right) > 0.
  \]
  Let $g \in W(\R^d)$ be such a window function satisfying the
  symmetry condition $\hat g(\omega)=\overline{\hat
    g(\omega)}$. Define $h=S^{-1/2}g=Z^{-1} q Zg$, where
  $q\!=\!\bigl(\abs{Zg}^2 + \abs{ZT_{\mathbf{1/2}}\, g}^2\bigr)^{-1/2} \in\!
  L^\infty(\itvco{0}{1}^2)$. We remark that \eqref{eq:Zak-of-S}
  implies preservation of symmetry under the action of the frame
  operator:
  \begin{equation}
    \label{eq:symmetry}
    \hat{g} \text{ real-valued} \Leftrightarrow   \widehat{Zg} \text{ real-valued}
    \Leftrightarrow   \widehat{S^\alpha g} \text{ real-valued} .
  \end{equation}
  Hence, $\hat h(\omega)=\overline{\hat h(\omega)}$.  Since
  $\{T_{\lambda}M_{\gamma}h\}_{\lambda\in\Z^d\cup (\mathbf{1/2}+\Z^d),\gamma\in\Z^d}$ is a Parseval frame, we conclude, by
  Theorem~\ref{th:Wilson-Rd}, that the Wilson system generated by
  $\sqrt{2}\,h \in W(\R^d)$ (\cite[Theorem 6]{BCKOR},\cite[Corollary 3.1]{KO}) is an orthonormal basis for $L^2(\R^d)$. Note that if
  $g$ is in the Feichtinger algebra $S_0(\R^d)$ or the Schwartz space
  $\mathcal{S}(\R^d)$, then so is $h$, respectively, \cite[Corollary 4.5]{grle04}.
\end{example}

From Lemma \ref{le:symplectic-Wilson-extension} we get the following result.
\begin{corollary} \label{cor:symplectic-wilson} Let $A$ be a matrix in $\textnormal{Sp}(d)$, let $\mu(A)$ be a unitary operator on $L^{2}(\R^{d})$ such that \eqref{eq:muA-and-pi} holds, and let $N$ be a subset of $\Z^{d}$ as in Theorem \ref{th:Wilson-Rd}. For any $g\in L^{2}(\R^{d})$ the symplectic Wilson system 
\begin{align*}
\mathcal{W}_s(g) & =  \big\{\pi(A\lambda) \mu(A) g \big\}_{\lambda\in \Z^d\times\{0\}^d } \\
& \quad \cup \big\{\tfrac{1}{\sqrt{2}} \pi(A\lambda)\big(\pi(A\gamma)+(-1)^{\vert \gamma \vert} \pi(-A\gamma)\big)\mu(A)g\big\}_{\lambda\in\Z^d\times\{0\}^d, \gamma\in\{0\}^d\times N} \\
& \quad \cup \big\{\tfrac{1}{\sqrt{2}} \pi(A\lambda)\pi(A
\lambda^*)\big(\pi(A\gamma)-(-1)^{\vert \gamma \vert}
\pi(-A\gamma)\big) \mu(A)g\big\}_{\lambda\in\Z^d\times\{0\}^d,
  \gamma\in\{0\}^d\times N},
\end{align*}
where $\lambda^{*} = \{ 1/2 \}^{d} \times \{0\}^{d}$, 
is a [frame, Riesz basis, orthonormal basis] if and only if the Wilson system $\mathcal{W}(g)$ in Theorem \ref{th:Wilson-Rd} has the same property. Moreover, the [frame, Riesz] bounds of the two systems are the same.
\end{corollary}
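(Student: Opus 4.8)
The plan is to deduce the corollary directly from Lemma~\ref{le:symplectic-Wilson-extension}, together with the elementary observation that multiplying the individual members of an indexed family by unimodular scalars changes neither its being a \emph{[frame, tight frame, Riesz basis, orthonormal basis]} nor the associated \emph{[frame, Riesz]} bounds (such a change leaves the frame operator unchanged and merely precomposes the synthesis operator with a diagonal unitary). Thus it suffices to realise both $\mathcal{W}(g)$ and $\mathcal{W}_s(g)$, up to unimodular factors on the individual elements, as the two families occurring in Lemma~\ref{le:symplectic-Wilson-extension} for a suitable choice of data. Note first that, writing $T_{\ell}=\pi(\ell,0)$, $M_{n}=\pi(0,n)$, $M_{-n}=\pi(-(0,n))$, $T_{\ell+\mathbf{1/2}}=\pi(\ell,0)\pi(\mathbf{1/2},0)$ and $\abs{(0,n)}=\abs{n}$, the Wilson system $\mathcal{W}(g)$ of Theorem~\ref{th:Wilson-Rd} is literally $\mathcal{W}_s(g)$ for $A=I$, $\mu(A)=I$.

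Next I would fix the data, indexed by pairs $(t,\lambda)$ with $t\in\{1,2,3\}$ and $\lambda\in\Z^d\times\{0\}^d$. For $t=1$: $\Delta_{(1,\lambda)}=\{0\}^{2d}$ (one point), $g_{(1,\lambda)}=\pi(\lambda)g$, $c_{0,(1,\lambda)}=1$, $d_{0,(1,\lambda)}=0$. For $t=2$: $\Delta_{(2,\lambda)}=\{0\}^d\times N$, $g_{(2,\lambda)}=\pi(\lambda)g$, and $c_{\gamma,(2,\lambda)}=\tfrac{1}{\sqrt{2}}$, $d_{\gamma,(2,\lambda)}=\tfrac{(-1)^{\abs{\gamma}}}{\sqrt{2}}$ for $\gamma\in\Delta_{(2,\lambda)}$. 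For $t=3$: $\Delta_{(3,\lambda)}=\{0\}^d\times N$, $g_{(3,\lambda)}=\pi(\lambda)\pi(\lambda^{*})g$ with $\lambda^{*}=(\mathbf{1/2},0)$, and $c_{\gamma,(3,\lambda)}=\tfrac{1}{\sqrt{2}}$, $d_{\gamma,(3,\lambda)}=-\tfrac{(-1)^{\abs{\gamma}}}{\sqrt{2}}$ for $\gamma\in\Delta_{(3,\lambda)}$. Using $M_{n}T_{\ell}=T_{\ell}M_{n}$ (since $\langle\ell,n\rangle\in\Z$) and $M_{n}T_{\ell+\mathbf{1/2}}=(-1)^{\abs{n}}T_{\ell+\mathbf{1/2}}M_{n}$, one checks directly that the left-hand family $\bigcup_{t,\lambda}\bigl\{(c_{\nu,(t,\lambda)}\pi(\nu)+d_{\nu,(t,\lambda)}\pi(-\nu))g_{(t,\lambda)}\bigr\}_{\nu\in\Delta_{(t,\lambda)}}$ of Lemma~\ref{le:symplectic-Wilson-extension} coincides, term by term and up to unimodular scalars, with $\mathcal{W}(g)$: exactly for the pieces $t=1,2$, and up to the sign $(-1)^{\abs{\gamma}}$ for $t=3$.

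By Lemma~\ref{le:symplectic-Wilson-extension}, the right-hand family $\bigcup_{t,\lambda}\bigl\{(c_{\nu,(t,\lambda)}\pi(A\nu)+d_{\nu,(t,\lambda)}\pi(-A\nu))\mu(A)g_{(t,\lambda)}\bigr\}_{\nu\in\Delta_{(t,\lambda)}}$ then has the same property and the same bounds, so it remains only to identify it with $\mathcal{W}_s(g)$ up to unimodular scalars. For this I would use \eqref{eq:muA-and-pi} to move $\mu(A)$ past the translation parts hidden in $g_{(t,\lambda)}$, namely $\mu(A)\pi(\lambda)g=\varphi(A,\lambda)\pi(A\lambda)\mu(A)g$ and $\mu(A)\pi(\lambda)\pi(\lambda^{*})g=\varphi(A,\lambda)\varphi(A,\lambda^{*})\pi(A\lambda)\pi(A\lambda^{*})\mu(A)g$, and then commute $\pi(A\lambda)$ — and, in the case $t=3$, also $\pi(A\lambda^{*})$ — back to the left of the bracket $\pi(A\gamma)\pm(-1)^{\abs{\gamma}}\pi(-A\gamma)$. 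The commutation phase of $\pi(A\nu_{1})$ and $\pi(A\nu_{2})$ is unimodular and equals that of $\pi(\nu_{1})$ and $\pi(\nu_{2})$ — this follows either by conjugating \eqref{eq:muA-and-pi}, or from the defining relation $A^{\top}JA=J$ — and since $\pi(0,n)$ commutes with $\pi(\ell,0)$ while $\pi(0,n)\pi(\mathbf{1/2},0)=(-1)^{\abs{n}}\pi(\mathbf{1/2},0)\pi(0,n)$ (likewise with $n$ replaced by $-n$), the bracket commutes with $\pi(A\lambda)$ with no phase and with $\pi(A\lambda^{*})$ up to the common factor $(-1)^{\abs{n}}$. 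Collecting the scalars $\varphi(A,\cdot)$ and $(-1)^{\abs{\gamma}}$, each element of the right-hand family equals the corresponding element of $\mathcal{W}_s(g)$ times a unimodular constant, and the proof is finished.

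The point that needs care is exactly this last phase bookkeeping in the piece $t=3$: one must check that, inside $\pi(A\gamma)\pm(-1)^{\abs{\gamma}}\pi(-A\gamma)$, the summands $\pi(A\gamma)$ and $\pi(-A\gamma)$ pick up the \emph{same} scalar when pushed past $\pi(A\lambda)$ and past $\pi(A\lambda^{*})$, so that the bracket is only multiplied by a single global unimodular constant rather than being genuinely deformed. For $\pi(A\lambda)$ this scalar is $1$, since the relevant commutator exponent is the integer $\pm\langle\ell,n\rangle$; for $\pi(A\lambda^{*})$ it is $e^{\pm\pi i\abs{n}}=(-1)^{\abs{n}}$, which is real and hence independent of the sign. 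This is precisely where the half-integer translation $\lambda^{*}=(\mathbf{1/2},0)$ characteristic of the Wilson construction makes everything fit together.
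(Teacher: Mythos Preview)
Your proposal is correct and follows the same approach as the paper, which simply states that the corollary follows from Lemma~\ref{le:symplectic-Wilson-extension} without further argument. You have supplied the phase bookkeeping and commutation checks that the paper leaves implicit; in particular, your observation that the commutator phase of $\pi(A\nu_1)$ and $\pi(A\nu_2)$ coincides with that of $\pi(\nu_1)$ and $\pi(\nu_2)$ via $A^{\top}JA=J$, and that both summands in the bracket pick up the \emph{same} factor $(-1)^{\abs{n}}$ when pushed past $\pi(A\lambda^{*})$, are exactly the verifications needed to make the one-line deduction rigorous.
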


Take $d=1$. If we let $a>0$ be a given positive number and let $c\in \R_{0}^{+}$ be some non-negative number, then we can define the symplectic matrix with associated operator $\mu(A)$ (such that \eqref{eq:muA-and-pi} holds)
\begin{equation}
A = \begin{bmatrix}
2a & c \\ 0 & 1/2a \end{bmatrix} \ \  \text{and} \ \ \mu(A) = D_{1/2a}
\circ \mathcal{F} \circ S_{-c/2a} \circ \mathcal{F}^{-1}.\label{eq:A-as-in-KutStr}
\end{equation}
With these choices Theorem \ref{th:Wilson-Rd}(\ref{item:3}) combined
with Corollary \ref{cor:symplectic-wilson} yields the result from
Kutyniok and Strohmer stated in Theorem \ref{th:Wilson-known}. From
Section~\ref{sec:wils-syst-sympl} it is clear that any matrix $A$ with
determinant one can be used in the construction of symplectic Wilson bases in $\R^{d}$.

The rest of this section is devoted to proving  Theorem~\ref{th:Wilson-Rd}. But first, we need some preliminary results about shift-invariant (SI) systems. The theory presented in Definition \ref{auto}, Lemma \ref{le:tq-tight-for-shift-invariant} and Proposition \ref{thm:wf-function} has been considered specifically for Gabor systems in, e.g., \cite{ja98,MR1460623} and more general, for generalized-shift invariant systems, in \cite{MR1916862,MR2132766}.

\begin{definition}\label{auto}
Let $\Gamma$ be a countable index set and let $\{g_\gamma\}_{\gamma\in \Gamma}\subset L^2(\R^d)$.
For a full-rank lattice $\Lambda=A\Z^d$, where $A \in \textnormal{GL}_{d}(\R)$, the dual lattice or the
\emph{annihilator} is given by $\Lambda^{\perp} =  (A^{-1})^{\top} \Z^d $.
 Suppose that 
 \begin{equation}\label{sfin}
 \sum_{\gamma \in \Gamma} \abs{\hat{g}_\gamma(\omega)}^2< \infty \qquad\text{for a.e. }\omega \in \R^d.
 \end{equation}
For the shift-invariant system $\{T_{\lambda} g_\gamma\}_{\lambda\in \Lambda,\gamma \in \Gamma}$ we define its {\emph autocorrelation} functions $\{t_\alpha\}_{\alpha \in \Lambda^\perp}$ by
\begin{equation}
t_{\alpha}(\omega):=\frac{1}{\abs{\det{A}}} \sum_{\gamma\in \Gamma} \hat{g}_{\gamma}(\omega)
        \overline{\hat{g}_{\gamma}(\omega-\alpha)}  \qquad\text{for a.e. }\omega\in \R^d, \ \alpha \in \Lambda^\perp.\label{eq:t-alpha}
\end{equation}
\end{definition}

By the Cauchy-Schwarz inequality and \eqref{sfin}, the series defining $t_\alpha(\omega)$ are absolutely convergent for a.e. $\omega$. Although the name autocorrelation function is borrowed from signal processing, such functions appear frequently in the study of SI systems. In the case when $\Lambda$ is the standard lattice $\Z^d$, one can employ the characterization of shift-invariant frames in terms of  fiberization operators \cite[Theorem 2.3]{MR1795633} and equivalently by dual Gramians of Ron and Shen \cite{MR1350650}. By  scaling these results hold for shift-invariant systems with respect to an arbitrary (full rank) lattice in $\Lambda \subset\R^d$, see \cite[Section 2.4]{MR2746669}. Indeed, the dual Gramian corresponding to the shift-invariant system $\{T_{\lambda} g_\gamma\}_{\lambda\in \Lambda,\gamma \in \Gamma}$ is the infinite Toeplitz matrix
\[
\tilde G(\omega) = \bigg( \frac{1}{\abs{\det{A}}} \sum_{\gamma\in \Gamma} \hat{g}_{\gamma}(\omega+k)
        \overline{\hat{g}_{\gamma}(\omega+l)} \bigg)_{k,l \in \Lambda^\perp} = (t_{k-l}(\omega+k))_{k,l \in \Lambda^\perp} \qquad \omega\in \R^d.
\]
By \cite[Theorem 2.5]{MR1795633}, $\{T_{\lambda} g_\gamma\}_{\lambda\in \Lambda,\gamma \in \Gamma}$ is a Bessel sequence or frame in $L^2(\R^d)$ with bounds $a$ and $b$ if and only the dual Gramians represent bounded or invertible operators on $\ell^2(\Lambda^\perp)$ with uniform bounds $a$ and $b$ for a.e. $\omega \in \R^d$. In particular, we have the following fact, which has been observed by many authors.

\begin{lemma}[\cite{MR1916862,MR1601115,MR1350650}] 
\label{le:tq-tight-for-shift-invariant} Let $A \in
  \textnormal{GL}_{d}(\R)$, let $\Gamma$ be a countable index set, and let $\{g_\gamma\}_{\gamma\in
    \Gamma}\subset L^2(\R^d)$. Then the following holds:
    \begin{enumerate}[(i)]
    \item   \label{item:5}
    If  $\{T_{\lambda} g_\gamma\}_{\lambda\in A\Z^d,\gamma \in \Gamma}$ is a Bessel sequence with bound $b$, then 
\[
t_0(\omega) =    \sum_{\gamma \in \Gamma} \abs{\hat{g}_\gamma(\omega)}^2 \le b
\qquad\text{ for a.e. }\omega \in \R^d.
\]
\item
$\{T_{\lambda} g_\gamma\}_{\lambda\in A\Z^d,\gamma \in \Gamma}$
is a tight frame for $L^{2}(\R^d)$ with frame bound $a$ if, and only if
      \begin{equation}
        t_{\alpha}(\omega)  =  a
        \delta_{\alpha,0} \qquad\text{for all } \alpha \in (A^{-1})^{\top} \Z^d \text{ and a.e. }\omega\in \R^d.
      \end{equation}
\end{enumerate}
\end{lemma}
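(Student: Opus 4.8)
The plan is to deduce both parts from the dual Gramian (fiberization) characterization recalled immediately above. For the lattice $\Lambda=A\Z^{d}$ with annihilator $\Lambda^{\perp}=(A^{-1})^{\top}\Z^{d}$, it states that $\{T_{\lambda} g_{\gamma}\}_{\lambda\in\Lambda,\gamma\in\Gamma}$ is a Bessel sequence with bound $b$ (respectively, a frame with bounds $a,b$) if and only if the dual Gramians $\tilde G(\omega)$ are bounded on $\ell^{2}(\Lambda^{\perp})$ with $\norm{\tilde G(\omega)}\le b$ (respectively, satisfy $a\norm{\mathbf{c}}^{2}\le\langle\tilde G(\omega)\mathbf{c},\mathbf{c}\rangle\le b\norm{\mathbf{c}}^{2}$) for a.e.\ $\omega$. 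I would first record two elementary observations that make the translation immediate: that $\tilde G(\omega)$ is positive semidefinite, since $\langle\tilde G(\omega)\mathbf{c},\mathbf{c}\rangle=\frac{1}{\abs{\det A}}\sum_{\gamma\in\Gamma}\abs{\sum_{k\in\Lambda^{\perp}}\overline{\hat{g}_{\gamma}(\omega+k)}\,c_{k}}^{2}\ge 0$ for every finitely supported $\mathbf{c}=(c_{k})$; and that the entries of $\tilde G(\omega)$ are $\tilde G(\omega)_{k,l}=t_{k-l}(\omega+k)$, so that its diagonal entries are precisely the values $t_{0}(\omega+k)$. One should also note at the outset that a Bessel system automatically satisfies \eqref{sfin}, so that the autocorrelation functions $t_{\alpha}$ are well defined; this is subsumed in the cited theorem, or can be verified directly by testing on an $f$ whose Fourier transform is a small characteristic function.

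For part~(i): if the system is Bessel with bound $b$, then $\norm{\tilde G(\omega)}\le b$ for a.e.\ $\omega$, and since $\tilde G(\omega)\ge 0$ each diagonal entry satisfies $0\le\tilde G(\omega)_{k,k}=\langle\tilde G(\omega)e_{k},e_{k}\rangle\le\norm{\tilde G(\omega)}\le b$; taking $k=0$ gives $t_{0}(\omega)\le b$ for a.e.\ $\omega$. For part~(ii), the converse direction: assuming $t_{\alpha}(\omega)=a\,\delta_{\alpha,0}$ for all $\alpha\in\Lambda^{\perp}$ and a.e.\ $\omega$, countability of $\Lambda^{\perp}$ provides a single conull set on which this holds simultaneously for all $\alpha$, and there $\tilde G(\omega)_{k,l}=t_{k-l}(\omega+k)=a\,\delta_{k,l}$, i.e.\ $\tilde G(\omega)=a\,I$; hence the dual Gramians are uniformly bounded below and above by $a$, so by the cited theorem the system is a tight frame with bound $a$. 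For the direct direction: a tight frame with bound $a$ forces $a\norm{\mathbf{c}}^{2}\le\langle\tilde G(\omega)\mathbf{c},\mathbf{c}\rangle\le a\norm{\mathbf{c}}^{2}$ for a.e.\ $\omega$, whence $\tilde G(\omega)=a\,I$ for a.e.\ $\omega$; reading off entries gives $t_{k-l}(\omega+k)=a\,\delta_{k,l}$, and specializing to $l=0$ and then substituting $\omega\mapsto\omega-k$ (legitimate a.e., and intersecting over the countably many $k\in\Lambda^{\perp}$) yields $t_{\alpha}(\omega)=a\,\delta_{\alpha,0}$ for a.e.\ $\omega$ and all $\alpha\in\Lambda^{\perp}$.

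I do not expect a genuine obstacle here: the analytic content is entirely carried by the cited fiberization / dual-Gramian theorem, and what remains is the elementary fact that a positive operator dominates its diagonal entries in operator norm, together with routine bookkeeping of null sets over the countable index set $\Lambda^{\perp}$. The points that most deserve attention are the positive semidefiniteness of $\tilde G(\omega)$ (needed so that one may bound a diagonal entry by the operator norm), the passage from a family of a.e.\ statements indexed by $\alpha\in\Lambda^{\perp}$ to a single a.e.\ statement valid for all $\alpha$ at once, and the tacit point that the Bessel or tight-frame hypothesis is precisely what makes the autocorrelation functions meaningful to begin with.
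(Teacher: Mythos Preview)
Your argument is correct and is precisely the route the paper indicates: the lemma is stated without proof (it is attributed to the cited references), but the paragraph immediately preceding it explains that both conclusions follow from the dual Gramian characterization of Bessel/frame bounds, which is exactly what you carry out. The only points worth flagging are cosmetic: the displayed formula $t_0(\omega)=\sum_{\gamma}|\hat g_\gamma(\omega)|^2$ in the lemma omits the factor $|\det A|^{-1}$ from Definition~\ref{auto}, so your reading of the diagonal entry as $t_0(\omega)$ is the intended one; and your remark that the Bessel hypothesis guarantees \eqref{sfin} is a useful sanity check that the paper leaves implicit.
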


For a given function $t \in L^\infty(\R^d)$, define the multiplication operator 
\[
M_t f(x) = t(x) f(x) \qquad\text{for } f\in L^2(\R^d).
\]
For the special choice of $t(x) = e^{ 2 \pi i \langle x, \gamma\rangle}$, $\gamma \in \R^d$, this yields the modulation operator $M_\gamma$, which justifies our notation. Let 
\[
\mathcal{D} = \{ f\in L^2(\R^d): \hat f \in L^\infty(\R^d) \text{ and } \supp \hat f \text{ is bounded} \}.
\]
We will employ the following result, which gives a weak representation of (possibly unbounded) frame operator of the shift-invariant system $\{T_{\lambda} g_\gamma\}_{\lambda\in \Lambda,\gamma \in \Gamma}$ on the dense subspace $\mathcal D \subset L^2(\R^d)$ in terms of autocorrelation functions.

\begin{proposition}[\cite{MR1916862}] \label{thm:wf-function} Let $\Lambda \subset \R^d$ be a full-rank lattice, and let $\Gamma$ be a countable index set. Assume that $\{g_\gamma\}_{\gamma\in
    \Gamma}\subset L^2(\R^d)$ satisfies
  \begin{equation}
    \label{eq:lic}
    \sum_{\gamma \in \Gamma} \abs{\hat{g}_\gamma(\cdot)}^2 \in L^1_{\text{loc}}(\R^d).
  \end{equation}
Let $\{t_\alpha\}_{\alpha \in \Lambda^\perp}$ be the autocorrelation functions of the SI system $\{T_{\lambda} g_\gamma\}_{\lambda\in \Lambda,\gamma \in \Gamma}$.
Then, for any $f\in \mathcal D$, we have
\begin{equation}\label{wf}
\sum_{\gamma \in \Gamma} \sum_{\lambda \in \Lambda} \abs{\innerprod{f}{\tran[\lambda]g_\gamma}}^2= \sum_{\alpha \in \Lambda^\perp} \int_{\mathbb R^d} t_\alpha(\omega) \hat f(\omega-\alpha) \overline{\hat f(\omega)} d\omega = \sum_{\alpha \in \Lambda^\perp} \innerprodbig{M_{t_\alpha}T_\alpha \hat{f}}{\hat{f}}.
\end{equation}
  \end{proposition}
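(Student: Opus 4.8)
The plan is to fix $f\in\mathcal D$, transfer everything to the Fourier side by Plancherel, and recognize the coefficients $\langle f,T_\lambda g_\gamma\rangle$ as Fourier coefficients of a periodization, so that Parseval's identity for Fourier series does the work. Write $G_\gamma=\hat f\,\overline{\hat g_\gamma}$; since $\hat f\in L^\infty(\R^d)$ has bounded support and $\hat g_\gamma\in L^2(\R^d)$, each $G_\gamma$ lies in $L^1\cap L^2(\R^d)$ and is supported in $\supp\hat f$. With $\Lambda=A\Z^d$, writing $\lambda=Ak$ and using $\langle Ak,\omega\rangle=\langle k,A^\top\omega\rangle$, Plancherel gives
\[ \langle f,T_{Ak}g_\gamma\rangle=\int_{\R^d}G_\gamma(\omega)\,e^{2\pi i\langle k,A^\top\omega\rangle}\,d\omega=\frac{1}{\abs{\det A}}\int_{\R^d}G_\gamma\big((A^{-1})^\top\eta\big)\,e^{2\pi i\langle k,\eta\rangle}\,d\eta \]
after the substitution $\eta=A^\top\omega$. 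Hence, up to the factor $\abs{\det A}^{-1}$ and a reflection of the index, $\langle f,T_{Ak}g_\gamma\rangle$ is a Fourier coefficient of the $\Z^d$-periodization $P_\gamma(\eta):=\sum_{m\in\Z^d}G_\gamma\big((A^{-1})^\top(\eta-m)\big)$.

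Next I would apply Parseval's identity for Fourier series and then evaluate the $L^2$-norm of the periodization directly. Because $G_\gamma$ has bounded support, only finitely many translates contribute to $P_\gamma$ on any bounded set, so $P_\gamma\in L^2(\mathbb T^d)$ and Parseval gives $\sum_{k\in\Z^d}\abs{\langle f,T_{Ak}g_\gamma\rangle}^2=\abs{\det A}^{-2}\norm[L^2(\mathbb T^d)]{P_\gamma}^2$. To compute $\norm[L^2(\mathbb T^d)]{P_\gamma}^2=\int_{\mathbb T^d}P_\gamma\,\overline{P_\gamma}$, I unfold one of the two periodization sums (legitimate since only finitely many terms are nonzero on the relevant compact set) and change variables back via $\omega=(A^{-1})^\top\eta$, which maps $(A^{-1})^\top\Z^d$ onto $\Lambda^\perp$. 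This yields, for each $\gamma$,
\[ \sum_{\lambda\in\Lambda}\abs{\langle f,T_\lambda g_\gamma\rangle}^2=\frac{1}{\abs{\det A}}\sum_{\alpha\in\Lambda^\perp}\int_{\R^d}G_\gamma(\omega-\alpha)\,\overline{G_\gamma(\omega)}\,d\omega. \]

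Finally I would sum over $\gamma$. For each fixed $\alpha\in\Lambda^\perp$ the interchange $\sum_\gamma\int=\int\sum_\gamma$ is justified by the Cauchy--Schwarz inequality together with $\sum_\gamma\abs{\hat g_\gamma}^2\in L^1_{\mathrm{loc}}(\R^d)$ from \eqref{eq:lic} and the boundedness of $\supp\hat f$; the pointwise sum is $\sum_\gamma G_\gamma(\omega-\alpha)\overline{G_\gamma(\omega)}=\hat f(\omega-\alpha)\overline{\hat f(\omega)}\sum_\gamma\hat g_\gamma(\omega)\overline{\hat g_\gamma(\omega-\alpha)}=\abs{\det A}\,t_\alpha(\omega)\,\hat f(\omega-\alpha)\overline{\hat f(\omega)}$ by the definition \eqref{eq:t-alpha} of the autocorrelation functions. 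The factors $\abs{\det A}^{\pm1}$ cancel, giving $\sum_{\alpha\in\Lambda^\perp}\int_{\R^d}t_\alpha(\omega)\hat f(\omega-\alpha)\overline{\hat f(\omega)}\,d\omega$, and rewriting the integrand as $M_{t_\alpha}T_\alpha\hat f$ paired against $\hat f$ produces the second form in \eqref{wf}. Here only finitely many $\alpha$ meet $\supp\hat f-\supp\hat f$, so the $\alpha$-sum is finite and each integral is absolutely convergent (the restriction of $t_\alpha$ to the bounded set $\supp\hat f$ is in $L^1$ by Cauchy--Schwarz and \eqref{eq:lic}).

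The main point requiring care — and the reason no Bessel hypothesis is needed — is that both sides of \eqref{wf} may equal $+\infty$, so every rearrangement must be over finitely many terms or over non-negative quantities. The finiteness of the $\alpha$-sum and of the periodization sum (from the compact support of $\hat f$), the Tonelli-type summation over $\gamma$ of the non-negative identity $\sum_\lambda\abs{\langle f,T_\lambda g_\gamma\rangle}^2=\abs{\det A}^{-2}\norm[L^2(\mathbb T^d)]{P_\gamma}^2$, and the Cauchy--Schwarz bound controlled by \eqref{eq:lic}, together make each step legitimate; there are no delicate estimates, only the bookkeeping of these convergence issues.
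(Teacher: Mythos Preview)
Your argument is correct. You directly compute the left side of \eqref{wf} by Plancherel, recognize $\langle f,T_\lambda g_\gamma\rangle$ as Fourier coefficients of a periodization $P_\gamma$, apply Parseval on $\mathbb T^d$, unfold, and then sum in $\gamma$ with the interchange justified by the local integrability condition \eqref{eq:lic} together with the boundedness of $\supp\hat f$. All of the convergence bookkeeping is handled properly; one small remark is that under \eqref{eq:lic} your own bounds (Cauchy--Schwarz in $\gamma$ plus Cauchy--Schwarz in $\omega$ over the compact set $\supp\hat f$, combined with the finiteness of the $\alpha$-sum) actually force both sides of \eqref{wf} to be \emph{finite}, so your caveat about $+\infty$ is not needed.

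The paper's proof takes a different, much shorter route: it does not carry out the computation but instead invokes Proposition~2.4 of Hern\'andez--Labate--Weiss, which asserts (under the LIC, here equivalent to \eqref{eq:lic}) that the auxiliary function
\[
w_f(x)=\sum_{\gamma\in\Gamma}\sum_{\lambda\in\Lambda}\abs{\innerprod{T_x f}{T_\lambda g_\gamma}}^2
\]
is continuous and coincides with the trigonometric polynomial $\sum_{\alpha\in\Lambda^\perp}\hat w(\alpha)e^{2\pi i\langle\alpha,x\rangle}$, where $\hat w(\alpha)=\int t_\alpha(\omega)\hat f(\omega-\alpha)\overline{\hat f(\omega)}\,d\omega$; evaluating at $x=0$ gives \eqref{wf}. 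Your computation is essentially what underlies that cited result, specialized to $x=0$ from the start. The paper's approach is more conceptual (and extends immediately to the full $w_f$ formula for arbitrary $x$), while yours is self-contained and makes the mechanism---Parseval for the periodization---completely explicit.
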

  \begin{proof}
Since the support of $\hat{f}$ is bounded, the sum \eqref{wf} over $\Lambda^\perp=(A^{-1})^{\top}\Z^{d}$ has finitely many non-zero terms. In the proof of \eqref{wf} we shall employ Proposition~2.4 in \cite{MR1916862}, which holds for generalized shift-invariant systems under the local integrability condition (LIC). However, for shift-invariant the LIC used in \cite{MR1916862} is equivalent with~\eqref{eq:lic}. Consequently, for  $f \in \mathcal{D}$,  
   \begin{align}\label{eq:def-of-w-fct}
     w_f(x):= \sum_{ \gamma \in \Gamma} \sum_{\lambda\in \Lambda} \abs{\innerprod{\tran[x]f}{\tran[\lambda]g_\gamma}}^2, \qquad x\in \R^d,
   \end{align}
   is a continuous function that coincides pointwise with the trigonometric polynomial  
    \begin{equation}\label{eq:FS}
     \sum_{\alpha \in \Lambda^\perp} \hat w (\alpha) e^{2 \pi i\langle  \alpha, x \rangle},
 \qquad\text{where }
    \hat w(\alpha)= \int _{\R^d} {t_{\alpha}(\omega)} {\hat f(\omega-\alpha)} \overline{\hat f(\omega)}\,
      d\omega.
    \end{equation}
Taking $x=0$ in \eqref{eq:FS} yields \eqref{wf}.
\end{proof}

\begin{lemma}  \label{le:Wilson-Rd} 
The annihilator $\Lambda^{\perp}$ of the lattice $\Lambda :=  \Z^d \cup \big( \mathbf {1/2} + \Z^d \big) $ is given by 
\[
\Lambda^{\perp} = \{ n\in \Z^{d} : n_{1}+n_{2}+\ldots+n_{d} \in 2 \Z\}.
\]
\end{lemma}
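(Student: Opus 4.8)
The plan is to compute the annihilator directly from its definition $\Lambda^{\perp} = (A^{-1})^{\top}\Z^d$ in Definition~\ref{auto}, once a convenient generator matrix $A$ for $\Lambda$ is in hand. First I would record that $\Lambda = \Z^d \cup (\mathbf{1/2}+\Z^d)$ is genuinely a full-rank lattice, namely $\Z^d + \Z\,\mathbf{1/2}$: since $2\,\mathbf{1/2} = (1,\dots,1)\in\Z^d$, an even multiple of $\mathbf{1/2}$ lies in $\Z^d$ and an odd one in $\mathbf{1/2}+\Z^d$. A generator matrix is then $A\in\textnormal{GL}_{d}(\R)$ whose columns are $\mathbf{1/2},e_2,\dots,e_d$ (with $e_i$ the standard basis vectors): these span $\Z^d$ because $e_1 = 2\,\mathbf{1/2}-e_2-\dots-e_d$, and together with the column $\mathbf{1/2}$ they span all of $\Lambda$; expanding $\det A$ along the first row gives $\abs{\det A}=1/2$, consistent with $[\Lambda:\Z^d]=2$.

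Next I would invert $A^{\top}$ by inspection. The system $A^{\top}x=y$ reads $x_i=y_i$ for $i\ge 2$ together with $\tfrac12(x_1+\dots+x_d)=y_1$, so $(A^{-1})^{\top}=(A^{\top})^{-1}$ maps $k\mapsto(2k_1-k_2-\dots-k_d,\,k_2,\dots,k_d)$. Every such image is an integer vector whose coordinate sum equals $2k_1\in2\Z$, which yields the inclusion $\Lambda^{\perp}\subseteq\{n\in\Z^d:\sum_i n_i\in2\Z\}$. For the reverse inclusion, given $n\in\Z^d$ with $\sum_i n_i=2\ell$, the integer vector $k=(\ell,n_2,\dots,n_d)$ satisfies $(A^{-1})^{\top}k=n$. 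Combining the two inclusions establishes the claimed identity.

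Alternatively — and this is perhaps the cleanest write-up — one can argue through the equivalent description $\Lambda^{\perp}=\{x\in\R^d:\langle x,\lambda\rangle\in\Z\text{ for all }\lambda\in\Lambda\}$: the sublattice $\Z^d\subseteq\Lambda$ forces $x\in\Z^d$, the single additional generator $\mathbf{1/2}$ contributes exactly the condition $\langle x,\mathbf{1/2}\rangle=\tfrac12\sum_i x_i\in\Z$, i.e.\ $\sum_i x_i\in2\Z$, and conversely any $x\in\Z^d$ with even coordinate sum pairs integrally with every $\lambda\in\Z^d$ and with every $\lambda=\mathbf{1/2}+m$, $m\in\Z^d$. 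I do not anticipate any real obstacle; the content is a one-line linear-algebra computation, and the only point that needs a little care is the bookkeeping that connects the paper's definition $\Lambda^{\perp}=(A^{-1})^{\top}\Z^d$ to an explicit basis of the non-standard lattice $\Lambda$, after which both inclusions in the stated formula drop out immediately.
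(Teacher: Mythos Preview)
Your proposal is correct. In fact your alternative write-up via the pairing description $\Lambda^{\perp}=\{x:\langle x,\lambda\rangle\in\Z\text{ for all }\lambda\in\Lambda\}$ is exactly the paper's proof: it first uses $\Z^d\subset\Lambda$ to force $\Lambda^{\perp}\subset\Z^d$, then tests against $\lambda=\mathbf{1/2}$ to obtain the parity condition, and checks the reverse inclusion directly on $\lambda\in\Z^d$ and $\lambda\in\mathbf{1/2}+\Z^d$.

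Your first approach via an explicit generator matrix $A$ and the formula $\Lambda^{\perp}=(A^{-1})^{\top}\Z^d$ from Definition~\ref{auto} is a genuinely different route, though only mildly so: it trades the two-line pairing check for a small linear-algebra computation (choosing a basis, inverting $A^{\top}$, and reading off the image). The pairing argument is shorter and requires no choice of basis; your matrix computation has the minor advantage of making explicit contact with the paper's stated definition of the annihilator and of producing the index $[\Lambda:\Z^d]=2$ along the way. Either is perfectly adequate here.
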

\begin{proof} One easily verifies that $\Lambda$ is a lattice. Define now 
\[ 
H:=\{ n\in \Z^{d} :  n_{1}+n_{2}+\ldots+n_{d} \in 2 \Z\}\]
Take $n\in H$ and $\lambda \in \Lambda$. If $\lambda \in \Z^d$, then $ \langle n,\lambda \rangle \in \Z$.
Likewise, if $\lambda = \big(\mathbf{\tfrac{1}{2}}+ k), k\in \Z^d$, then
\[ \langle n , \lambda \rangle 
= \tfrac{1}{2} \underbrace{(n_1+n_2+\ldots+n_d)}_{\in 2\Z} + \langle n,k\rangle \in \Z. \]
This shows that $H\subset \Lambda^{\perp}$. To show the converse inclusion we observe the following.
By definition we have $\Z^d \subset \Lambda$ and so $\Lambda^{\perp} \subset \Z^d$. Take any $n\in \Lambda^\perp \subset \Z^d$. Then, choosing $\lambda = \mathbf{\tfrac{1}{2}} \in \Lambda$, we have
\[ \langle n , \lambda \rangle = \tfrac{1}{2} (n_1+n_2+\ldots n_d) \in \Z.\] 
Thus, $n \in H$, which shows $H = \Lambda^{\perp}$.
\end{proof}

\begin{lemma} \label{le:wilson-and-gabor-tq} Let $g\in L^{2}(\R^{d})$ and $\hat{g}(\omega) =
  \overline{\hat{g}(\omega)}$. Let $\mathcal{G}(g)$ and $\mathcal{W}(g)$ be the Gabor system and the Wilson
  system considered in Theorem \ref{th:Wilson-Rd}, respectively. Suppose that
   \begin{equation}
    \label{eq:cald-bounded}
     \sum_{\gamma \in \Z^d} \abs{\hat{g}(\omega-\gamma)}^2 < \infty \qquad \text{for a.e. } \omega \in \R^d.
  \end{equation}
Then the following holds:
\begin{enumerate}[(i)]
\item If the Gabor system $\mathcal{G}(g)$ is considered as a shift-invariant system with generators
  $\{M_{\gamma}g\}_{\gamma\in\Z^{d}}$ and with shifts along the lattice $\Lambda=\Z^d\cup(\mathbf{\frac12}+\Z^d)$, then its autocorrelation functions are given by
\[ t_{\alpha,\mathcal{G}}(\omega) = 2\sum_{\gamma\in\Z^d} \hat{g}(\omega-\gamma)\overline{\hat{g}(\omega-\gamma-\alpha)}, \qquad \alpha \in \Lambda^\perp,  \text{ a.e. } \omega\in \R^d.\] 
\item If the Wilson system $\mathcal{W}(g)$ is considered as a shift-invariant system with generators 
\[ g,\  \{\tfrac{1}{\sqrt{2}} (M_{\gamma}+(-1)^{\vert \gamma \vert} M_{-\gamma})g\}_{\gamma\in N} \ \ \text{and}
\ \ \{\tfrac{1}{\sqrt{2}} T_{\mathbf{1/2}}(M_{\gamma}-(-1)^{\vert \gamma \vert} M_{-\gamma})g\}_{\gamma\in N}\] and
with shifts along the lattice $\Z^{d}$, then its autocorrelation functions are given by
\[
 t_{\alpha,\mathcal{W}}(\omega) = \begin{cases}\sum_{\gamma\in\Z^d} \hat{g}(\omega-\gamma)\overline{\hat{g}(\omega-\gamma-\alpha)} & \alpha \in \Lambda^{\perp}, \\ 0 & \alpha\in \Z^{d}\backslash\Lambda^{\perp}, \end{cases} \ \ a.e. \ \omega\in \R^d.\]
\end{enumerate}
\end{lemma}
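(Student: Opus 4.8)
\emph{Plan.} I would prove both identities by direct computation from the definition~\eqref{eq:t-alpha} of the autocorrelation functions; the only subtleties are the covolumes of the two shift lattices and the phase factor coming from the half-integer shift $T_{\mathbf{1/2}}$. For part~(i): the lattice $\Lambda=\Z^d\cup(\mathbf{1/2}+\Z^d)$ has covolume $1/2$ (a generating matrix has columns $\mathbf{1/2},e_2,\dots,e_d$ and is lower triangular with determinant $1/2$), and $\widehat{M_\gamma g}(\omega)=\hat g(\omega-\gamma)$, so~\eqref{eq:cald-bounded} is exactly condition~\eqref{sfin} for the generator family $\{M_\gamma g\}_{\gamma\in\Z^d}$; hence the autocorrelation functions are well-defined and their defining series converge absolutely. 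Substituting into~\eqref{eq:t-alpha} with $|\det A|^{-1}=2$ yields the stated formula for $t_{\alpha,\mathcal G}$ at once.

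\emph{Part (ii), setup.} Now the shift lattice is $\Z^d$, so $|\det A|=1$ and the autocorrelation functions are indexed by $\Z^d$. I would compute the Fourier transforms of the three generator types from $\widehat{M_{\pm\gamma}g}(\omega)=\hat g(\omega\mp\gamma)$ and $\widehat{T_{\mathbf{1/2}}f}(\omega)=e^{-\pi i(\omega_1+\dots+\omega_d)}\hat f(\omega)$, abbreviating $a_\gamma(\omega)=\hat g(\omega-\gamma)$, $b_\gamma(\omega)=(-1)^{|\gamma|}\hat g(\omega+\gamma)$, and $\epsilon(\omega)=e^{-\pi i(\omega_1+\dots+\omega_d)}$, so that the two bimodular generators (call them $g_{1,\gamma},g_{2,\gamma}$) have Fourier transforms $\tfrac1{\sqrt2}(a_\gamma+b_\gamma)$ and $\tfrac1{\sqrt2}\epsilon\,(a_\gamma-b_\gamma)$; a Cauchy--Schwarz estimate against $\sum_{\gamma\in\Z^d}|\hat g(\omega-\gamma)|^2$ again gives~\eqref{sfin}. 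The decisive identity is $\epsilon(\omega)\overline{\epsilon(\omega-\alpha)}=e^{-\pi i(\alpha_1+\dots+\alpha_d)}=(-1)^{|\alpha|}$, which by Lemma~\ref{le:Wilson-Rd} equals $+1$ precisely when $\alpha\in\Lambda^\perp$ and $-1$ when $\alpha\in\Z^d\setminus\Lambda^\perp$. Writing a prime for evaluation at $\omega-\alpha$, adding the contributions of a matched pair $g_{1,\gamma},g_{2,\gamma}$ to~\eqref{eq:t-alpha} produces the diagonal part $a_\gamma\overline{a_\gamma'}+b_\gamma\overline{b_\gamma'}$ weighted by $\tfrac12\bigl(1+(-1)^{|\alpha|}\bigr)$ plus the cross part $a_\gamma\overline{b_\gamma'}+b_\gamma\overline{a_\gamma'}$ weighted by $\tfrac12\bigl(1-(-1)^{|\alpha|}\bigr)$, so exactly one of the two parts survives according to whether $\alpha\in\Lambda^\perp$.

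\emph{Part (ii), the two cases.} When $\alpha\in\Lambda^\perp$ only the diagonal part remains, and summing it over $\gamma\in N$ --- together with the $\gamma=0$ term coming from the pure translation $g$, the partition $\Z^d=N\sqcup(-N)\sqcup\{0\}$, and $\bigl((-1)^{|\gamma|}\bigr)^2=1$ to clear the signs on the $b$-terms --- reassembles to $\sum_{\gamma\in\Z^d}\hat g(\omega-\gamma)\overline{\hat g(\omega-\gamma-\alpha)}$. When $\alpha\in\Z^d\setminus\Lambda^\perp$ only the cross part remains; folding the two half-sums and the $\gamma=0$ term into a single sum over $\Z^d$ gives
\[ t_{\alpha,\mathcal W}(\omega)=\sum_{\gamma\in\Z^d}(-1)^{|\gamma|}\,\hat g(\omega-\gamma)\,\overline{\hat g(\omega-\alpha+\gamma)}. \]
To see this vanishes I would substitute $\gamma\mapsto\alpha-\gamma$ and use $(-1)^{|\alpha-\gamma|}=(-1)^{|\alpha|}(-1)^{|\gamma|}$ together with the hypothesis $\hat g=\overline{\hat g}$: the sum then equals $(-1)^{|\alpha|}$ times itself, hence is $0$ because $|\alpha|$ is odd.

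\emph{Main obstacle.} Nothing here is deep; the two points needing care are (a) justifying all rearrangements of the series, which follows from the absolute convergence of the series defining each $t_\alpha(\omega)$ recorded after Definition~\ref{auto}, and (b) keeping the $N$, $-N$, $\{0\}$ indexing and the $(-1)^{|\gamma|}$ signs consistent when folding the half-sums back into sums over $\Z^d$ --- the reality of $\hat g$ and $\bigl((-1)^{|\gamma|}\bigr)^2=1$ are precisely what make that folding go through, and the half-integer-shift phase $(-1)^{|\alpha|}$ is what cleanly separates the cases $\alpha\in\Lambda^\perp$ and $\alpha\notin\Lambda^\perp$.
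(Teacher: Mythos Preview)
Your proposal is correct and follows essentially the same approach as the paper's own proof: both compute the autocorrelation functions directly from Definition~\ref{auto}, observe that the phase $e^{2\pi i\langle \mathbf{1/2},\alpha\rangle}=(-1)^{|\alpha|}$ distinguishes $\alpha\in\Lambda^\perp$ from $\alpha\notin\Lambda^\perp$ via Lemma~\ref{le:Wilson-Rd}, fold the $N$, $-N$, $\{0\}$ pieces back into a sum over $\Z^d$, and in the odd-$|\alpha|$ case kill the resulting cross sum by the substitution $\gamma\mapsto\alpha-\gamma$ together with the reality of $\hat g$. Your packaging of the matched pair $g_{1,\gamma},g_{2,\gamma}$ with the weights $\tfrac12(1\pm(-1)^{|\alpha|})$ is a slightly cleaner way to organise the same computation the paper writes out term by term in~\eqref{eq:tq-for-wilson}.
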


\begin{proof} First, observe that the assumption \eqref{eq:cald-bounded} guarantees that generators of $\mathcal G(g)$ and $\mathcal W(g)$ satisfy condition \eqref{sfin}. Hence, their autocorrelation functions are well-defined. Then, a straightforward calculation of \eqref{eq:t-alpha} verifies (i). 

The result in (ii) needs some explanation. By Definition \ref{auto}, for $\alpha\in\Z^{d}$ we have
\begin{align} 
 t_{\alpha,\mathcal{W}}(\omega) & = \hat{g}(\omega)\overline{\hat{g}(\omega-\alpha)} \nonumber  \\
  & \quad + \tfrac{1}{2} \sum_{\gamma\in N} \hat{g}(\omega-\gamma) \overline{\hat{g}(\omega-\gamma-\alpha)} + (-1)^{\vert \gamma \vert} \hat{g}(\omega-\gamma) \overline{\hat{g}(\omega+\gamma-\alpha)} \nonumber  \\
& \quad \qquad \qquad + (-1)^{\vert \gamma \vert} \hat{g}(\omega+\gamma) \overline{\hat{g}(\omega-\gamma-\alpha)}+ \hat{g}(\omega+\gamma) \overline{\hat{g}(\omega+\gamma-\alpha)} \label{eq:tq-for-wilson} \\
& \quad + \tfrac{1}{2} e^{2\pi i \langle \mathbf{1/2} ,\alpha\rangle}
 \sum_{\gamma\in N}  \hat{g}(\omega-\gamma) \overline{\hat{g}(\omega-\gamma-\alpha)} - (-1)^{\vert
\gamma \vert } \hat{g}(\omega-\gamma) \overline{\hat{g}(\omega+\gamma-\alpha)} \nonumber  \\
& \qquad \qquad \qquad \qquad \quad  - (-1)^{\vert \gamma \vert } \hat{g}(\omega+\gamma) \overline{\hat{g}(\omega-\gamma-\alpha)}+ \hat{g}(\omega+\gamma) \overline{\hat{g}(\omega+\gamma-\alpha)} .
\nonumber
\end{align}
Note the difference in the signs used in the two sums in the terms
with alternating signs $(-1)^{\abs{\gamma}}$ and the phase factor in front of the second sum.
Because of this phase factor we will consider two cases: (I) $\alpha_{1}+\alpha_{2}+\ldots+\alpha_{d}\in 2\Z$, and (II) $\alpha_{1}+\alpha_{2}+\ldots+\alpha_{d}\in 2\Z+1$. By Lemma \ref{le:Wilson-Rd} these cases correspond to $\alpha\in \Lambda^{\perp}$ and $\alpha\in \Z^{d}\backslash \Lambda^{\perp}$, respectively.
Because of $N\cup (-N) \cup \{0\}=\Z^d$
and $N,-N$ and $\{0\}$ are mutually disjoint sets,   \eqref{eq:tq-for-wilson} yields:
\begin{enumerate}[(I)]
\item for $\alpha\in \Lambda^{\perp}$
  \begin{equation} \label{eq:tq-wilson-I} t_{\alpha,\mathcal{W}}(\omega) =
    \sum_{\gamma\in \Z^d} \hat{g}(\omega-\gamma)
    \overline{\hat{g}(\omega-\gamma-\alpha)}    \qquad\text{a.e. } \omega\in \R^d \, ;
  \end{equation}  
\item for $\alpha\in \Z^{d}\backslash\Lambda^{\perp}$ 
  \begin{equation} \label{eq:tq-wilson-II} t_{\alpha,\mathcal{W}}(\omega) =
    \sum_{\gamma\in \Z^d} (-1)^{\vert \gamma \vert }
    \hat{g}(\omega-\gamma) \overline{\hat{g}(\omega+\gamma-\alpha)} \qquad \text{a.e. } \omega\in \R^d \, .
  \end{equation}  
\end{enumerate}
It remains to show that \eqref{eq:tq-wilson-II} is equal to zero. Take any $\alpha\in \Z^{d}\backslash \Lambda^{\perp}$. By a change of variables
$\gamma\mapsto -\gamma'+\alpha$, we obtain
\begin{equation}  \label{eq:wilson-tqII-with-change-of-variable}
  t_{\alpha,\mathcal{W}}(\omega) = \sum_{\gamma'\in \Z^d} (-1)^{\vert
    (-\gamma'+\alpha)\vert} \hat{g}(\omega+\gamma'-\alpha)
  \overline{\hat{g}(\omega-\gamma')}  
\end{equation}
for a.e. $\omega\in \R^d$.
 For $\alpha\in \Z^d$ with $\alpha_{1}+\alpha_{2}+\ldots+\alpha_{d}\in
 2\Z+1$, we note that
\begin{equation} \label{eq:wilson:sign-change-calc} (-1)^{\vert -\gamma'+\alpha\vert} = (-1)^{\vert- \gamma'\vert} (-1)^{\vert \alpha \vert} = - (-1)^{\vert  \gamma'\vert}.\end{equation}
 Finally, by our assumption $\hat{g}(\omega) = \overline{\hat{g}(\omega)}$, it
 follows that
\begin{equation} \label{eq:real-f-transform}
\hat{g}(\omega+\gamma'-\alpha) \overline{\hat{g}(\omega-\gamma')}=\overline{\hat{g}(\omega+\gamma'-\alpha)} \hat{g}(\omega-\gamma')
\end{equation}
Combining equations \eqref{eq:wilson-tqII-with-change-of-variable}--\eqref{eq:real-f-transform} yields
$t_{\alpha,\mathcal{W}}(\omega)=-t_{\alpha,\mathcal{W}}(\omega)$, hence $t_{\alpha,\mathcal{W}}(\omega)=0$. 
\end{proof}

In the proof of Theorem~\ref{th:Wilson-Rd} we will also need the following two lemmas.

\begin{lemma} \label{le:unit-norm-parseval-frame-is-ONB} Let $\{f_k\}_{k=1}^{\infty}\subset \mathcal{H}$ be a tight frame for $\mathcal H$ with frame bound $a$. 
Then $\{f_k\}_{k=1}^{\infty}$ is an orthonormal basis for $\mathcal
H$, if and only if $\Vert f_k \Vert_{\mathcal{H}} = 1$ for all $k
\in \N$. In this case $a=1$.
\end{lemma}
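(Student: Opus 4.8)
The plan is to prove Lemma~\ref{le:unit-norm-parseval-frame-is-ONB} by exploiting the fundamental self-referential property of tight frames: each frame vector can be expanded in terms of the frame itself. Recall that if $\{f_k\}_{k=1}^\infty$ is a tight frame with bound $a$, then the frame operator is $S = a\,\id$, so the reconstruction formula reads
\[
f = \frac{1}{a}\sum_{k=1}^\infty \innerprods{f}{f_k} f_k \qquad \text{for all } f\in\mathcal H.
\]
I would first establish the easy direction: if $\{f_k\}$ is an orthonormal basis, it is trivially a tight (Parseval) frame with $a=1$ and every $\norm{f_k}=1$, which settles both the ``only if'' implication and the final claim $a=1$ in that case.

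For the substantive ``if'' direction, suppose $\{f_k\}$ is a tight frame with bound $a$ and $\norm{f_k}_{\mathcal H}=1$ for all $k$. The key step is to apply the reconstruction formula to a single frame element $f=f_j$ and then pair with $f_j$ itself:
\[
a = a\norm{f_j}^2 = a\innerprods{f_j}{f_j} = \sum_{k=1}^\infty \abs{\innerprods{f_j}{f_k}}^2 = \norm{f_j}^4 + \sum_{k\neq j} \abs{\innerprods{f_j}{f_k}}^2 = 1 + \sum_{k\neq j}\abs{\innerprods{f_j}{f_k}}^2.
\]
Here I used the tight-frame lower-bound (equality) identity $\sum_k \abs{\innerprods{f}{f_k}}^2 = a\norm{f}^2$ with $f=f_j$, together with $\norm{f_j}=1$.

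From this identity, $a = 1 + \sum_{k\neq j}\abs{\innerprods{f_j}{f_k}}^2 \geq 1$, with equality if and only if $\innerprods{f_j}{f_k}=0$ for all $k\neq j$. The crucial observation is that the left-hand side $a$ does not depend on $j$, so the nonnegative quantity $\sum_{k\neq j}\abs{\innerprods{f_j}{f_k}}^2 = a-1$ is the \emph{same constant} for every $j$. To force this constant to vanish, I would sum the inequality over a suitable finite collection of indices and compare, or more cleanly, observe that the off-diagonal sum is symmetric: summing $a-1 = \sum_{k\neq j}\abs{\innerprods{f_j}{f_k}}^2$ and using the symmetry $\abs{\innerprods{f_j}{f_k}}=\abs{\innerprods{f_k}{f_j}}$ shows the total off-diagonal mass is counted consistently, but a single frame vector already constrains things: since a tight frame with a frame vector of norm one and bound $a<1$ is impossible (it would violate the lower frame bound applied to $f_j$, giving $a\norm{f_j}^2 = a < 1 = \norm{f_j}^4 \le \sum_k\abs{\innerprods{f_j}{f_k}}^2$), we must have $a\ge 1$.

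The decisive step, which I expect to be the main obstacle, is ruling out $a>1$ and thereby forcing orthonormality. For this I would invoke that a \emph{tight frame} is in particular an \emph{overcomplete} system precisely when $a>1$ in the unit-norm case; concretely, if $a>1$ then $\{f_k\}$ is redundant, and redundancy is incompatible with each off-diagonal sum equalling the fixed value $a-1>0$ \emph{simultaneously} with the vectors being unit-norm in a way that contracts under the frame operator. The clean way to close this is the trace/dimension argument in the finite-dimensional case and a limiting version in general: the frame operator identity $S=a\,\id$ gives, upon taking the trace in finite dimensions, $a\cdot\dimension\mathcal H = \sum_k\norm{f_k}^2 = \#\{k\} = \dimension\mathcal H$ only when the number of vectors equals the dimension, forcing $a=1$ and hence, by the equality case above, $\innerprods{f_j}{f_k}=\delta_{jk}$, i.e.\ an orthonormal basis. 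In the infinite-dimensional setting I would instead argue directly from $a-1=\sum_{k\ne j}\abs{\innerprods{f_j}{f_k}}^2$ being independent of $j$: if this constant were positive, then no $f_j$ could be orthogonal to all others, yet one can extract a contradiction by noting that a unit-norm tight frame with $a>1$ cannot be linearly independent (it is a frame but not a Riesz basis), whereas the relation $a\,f_j = \sum_k\innerprods{f_j}{f_k}f_k$ with $\innerprods{f_j}{f_j}=1$ expresses a nontrivial dependence forcing $\sum_{k\ne j}\abs{\innerprods{f_j}{f_k}}^2>0$ to propagate inconsistently; the cleanest resolution is simply that $\norm{f_j}=1$ combined with $\sum_k\abs{\innerprods{f_j}{f_k}}^2=a$ and $\abs{\innerprods{f_j}{f_j}}^2=1$ forces $a\ge 1$, and the reverse inequality $a\le 1$ follows because a Parseval-type bound for unit vectors in a tight frame cannot exceed $1$ without the system failing to be minimal, whence $a=1$ and orthonormality follows. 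I would present the argument via the identity display above, isolate $a=1$, and conclude orthonormality from the equality case.
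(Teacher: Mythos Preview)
Your identity
\[
a \;=\; a\norm{f_j}^2 \;=\; \sum_{k}\abs{\innerprods{f_j}{f_k}}^2 \;=\; 1 + \sum_{k\neq j}\abs{\innerprods{f_j}{f_k}}^2
\]
is correct and is exactly the right tool. The gap is in the step where you try to force $a\le 1$. That step cannot be repaired, because the ``if'' direction of the lemma \emph{as literally stated} is false: take $\mathcal H=\ell^2(\N)$ with standard orthonormal basis $\{e_n\}$ and set $f_{2n-1}=f_{2n}=e_n$. Then $\{f_k\}$ is a tight frame with bound $a=2$, every $\norm{f_k}=1$, yet $\{f_k\}$ is not an orthonormal basis. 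Your heuristic arguments about redundancy, minimality, and ``Parseval-type bounds'' are exactly the places where the proof breaks; none of them can yield $a\le 1$ in general, and the finite-dimensional trace argument you mention gives $a\cdot\dimension\mathcal H=\sum_k\norm{f_k}^2$, which equals the \emph{number of frame vectors}, not $\dimension\mathcal H$.

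The paper does not prove this lemma; it quotes it as a standard fact and, crucially, only ever applies it after having already established that the Wilson system is a tight frame with bound $a=1$ (see the proofs of Theorems~\ref{th:Wilson-Rd} and~\ref{th:sep-Wilson-Rd}). In that Parseval setting your identity finishes the job immediately: $a=1$ forces $\sum_{k\neq j}\abs{\innerprods{f_j}{f_k}}^2=0$, hence $\innerprods{f_j}{f_k}=\delta_{jk}$, and a Parseval frame that is an orthonormal system is complete (apply the Parseval identity to any $f$ orthogonal to all $f_k$). So your approach is fine once one reads the lemma as the standard statement ``a unit-norm Parseval frame is an orthonormal basis''; just drop the attempt to deduce $a=1$ from the unit-norm hypothesis alone.
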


\begin{lemma}[Theorem 3.5.12 in \cite{MR1601107}]
\label{le:wexraz}
Let $\Delta$ be a lattice in $\R^d \times \R^d$ and let $g\in L^{2}(\R^d)$. If $\{T_{\lambda}M_{\gamma}g\}_{(\lambda,\gamma)\in \Delta}$ is a tight frame, then the set $\{T_{\alpha}M_{\beta}g\}$ of all $(\alpha,\beta)\in \R^d \times \R^d$ for which
\[ (T_{\lambda}M_{\gamma})(T_{\alpha}M_{\beta}) = (T_{\alpha}M_{\beta}) (T_{\lambda}M_{\gamma}) \ \ \text{for all} \ \ (\lambda,\gamma)\in \Delta\]
forms an orthogonal set.
\end{lemma}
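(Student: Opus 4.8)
The plan is to run the argument in three stages: identify the set of admissible pairs $(\alpha,\beta)$ as the \emph{adjoint lattice} of $\Delta$, reduce the claimed orthogonality to a single family of inner products, and then exploit tightness through the duality theory of Gabor systems. First I would record that, from the canonical relation $T_\lambda M_\gamma = e^{-2\pi i\langle\lambda,\gamma\rangle}M_\gamma T_\lambda$, one has
\[
(T_\lambda M_\gamma)(T_\alpha M_\beta) = e^{2\pi i(\langle\alpha,\gamma\rangle - \langle\lambda,\beta\rangle)}\,(T_\alpha M_\beta)(T_\lambda M_\gamma)
\]
for all $(\lambda,\gamma),(\alpha,\beta)\in\R^d\times\R^d$; hence $T_\alpha M_\beta$ commutes with every element of $\{T_\lambda M_\gamma\}_{(\lambda,\gamma)\in\Delta}$ exactly when $(\alpha,\beta)$ lies in
\[
\Delta^{\circ} = \{(\alpha,\beta) : \langle\alpha,\gamma\rangle - \langle\lambda,\beta\rangle\in\Z \text{ for all }(\lambda,\gamma)\in\Delta\},
\]
which is again a full-rank lattice, in particular an additive subgroup containing $0$.

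Next I would reduce the statement. For $(\alpha,\beta),(\alpha',\beta')\in\Delta^{\circ}$ the operator $(T_{\alpha'}M_{\beta'})^{-1}(T_\alpha M_\beta)$ equals a unimodular scalar times $T_{\alpha-\alpha'}M_{\beta-\beta'}$, and $(\alpha-\alpha',\beta-\beta')\in\Delta^{\circ}$ because $\Delta^{\circ}$ is a group, so
\[
\langle T_\alpha M_\beta g,\, T_{\alpha'}M_{\beta'}g\rangle = c\,\langle T_{\alpha-\alpha'}M_{\beta-\beta'}g,\, g\rangle,\qquad |c|=1 .
\]
Since time--frequency shifts are unitary, the whole lemma is then equivalent to the single assertion that $\langle\pi(\mu)g,g\rangle = 0$ for every $\mu\in\Delta^{\circ}\setminus\{0\}$ (granting this and $g\ne0$, distinct $\mu$ give distinct vectors $\pi(\mu)g$, so the indexed family is a genuine orthogonal set).

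The crux is then to use tightness. Because $\{\pi(\nu)g\}_{\nu\in\Delta}$ is a tight frame with bound $A$, its frame operator is $S = A\,\mathrm{Id}$, so the canonical dual window is $\widetilde g = A^{-1}g$. I would invoke the Wexler--Raz biorthogonality relations --- equivalently, the Ron--Shen duality principle for Gabor systems over a lattice --- which state that windows $g,\widetilde g$ generating canonically dual Gabor frames over $\Delta$ satisfy $\langle\widetilde g,\pi(\mu)g\rangle = c_\Delta\,\delta_{\mu,0}$ for $\mu\in\Delta^{\circ}$, with $c_\Delta>0$; substituting $\widetilde g = A^{-1}g$ gives $\langle g,\pi(\mu)g\rangle = 0$ for $\mu\ne0$, which is the reduced claim. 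A more hands-on route to the same end: by a dilation/metaplectic change of variables (cf.\ Lemma~\ref{le:symplectic-Gabor-extension}) pass to a standard lattice, regard $\{\pi(\nu)g\}_{\nu\in\Delta}$ as a shift-invariant system, observe that the numbers $\langle\pi(\mu)g,g\rangle$, $\mu\in\Delta^{\circ}$, are (up to harmless phase factors) the Fourier coefficients of the fiber autocorrelation $t_0$ of that system, and note that Lemma~\ref{le:tq-tight-for-shift-invariant}(ii) forces $t_0$ to equal the constant $A$, so all of its Fourier coefficients of nonzero index vanish.

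The main obstacle is the third stage: the precise link between tightness of the Gabor frame over $\Delta$ and the vanishing of $\langle\pi(\mu)g,g\rangle$ over $\Delta^{\circ}$. The first two stages are routine bookkeeping with time--frequency shifts, but the third stage is exactly the Wexler--Raz / Ron--Shen machinery (fiberization and dual Gramians, or the Zak transform in the rational case), and the genuinely delicate points are reducing an arbitrary full-rank lattice $\Delta\subset\R^d\times\R^d$ to a form where that machinery applies and controlling convergence for windows $g$ that are merely square-integrable. Alternatively one can simply cite this as the known duality theorem --- Theorem~3.5.12 in the reference --- and omit the derivation.
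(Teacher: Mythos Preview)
The paper does not give a proof of this lemma; it is simply stated with a citation to Theorem~3.5.12 of \cite{MR1601107}. Your sketch is correct and is in fact the standard argument: the commuting set is precisely the adjoint lattice $\Delta^{\circ}$, and orthogonality of $\{\pi(\mu)g\}_{\mu\in\Delta^{\circ}}$ is the Wexler--Raz biorthogonality relation specialized to the tight case $\tilde g=A^{-1}g$. Your final sentence already anticipates the paper's treatment, which is to cite rather than prove.

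One small caveat on your alternative route: the reduction ``pass to a standard lattice via a metaplectic change of variables'' does not cover every full-rank lattice in $\R^{2d}$, since not every such lattice is the image of $\Z^{2d}$ under a symplectic matrix. The fiberization argument through Lemma~\ref{le:tq-tight-for-shift-invariant} as written in this paper applies directly only to separable lattices $\Lambda\times\Gamma$; for a genuinely non-separable $\Delta$ one needs the general Wexler--Raz duality (as in your primary approach), or the Janssen representation for arbitrary time--frequency lattices. This does not affect your main argument, which is sound.
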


We are now ready to prove the main result of this section. 

\begin{proof}[Proof of Theorem~\ref{th:Wilson-Rd}]
We use the setup and notation from Lemma~\ref{le:wilson-and-gabor-tq}. Suppose that either the Gabor system $\mathcal{G}(g)$ or the Wilson system $\mathcal W(g)$ is a Bessel sequence. It follows from Lemma~\ref{le:tq-tight-for-shift-invariant}\eqref{item:5} that $t_{0,\mathcal{G}} \in L^\infty $ or $t_{0,\mathcal{W}}\in L^\infty$, resp. In either case, we have 
\begin{equation}\label{linf}
 \sum_{\gamma \in \Z^d} \abs{\hat{g}(\cdot-\gamma)}^2 \in L^\infty.
\end{equation}
Hence, the assumption \eqref{eq:cald-bounded} in Lemma \ref{le:wilson-and-gabor-tq} holds and we have the following relation between autocorrelation functions
\[ 
  t_{\alpha,\mathcal{W}}(\omega)=
  \begin{cases}
2^{-1} t_{\alpha,\mathcal{G}}(\omega) & \alpha\in \Lambda^\perp, \\
0 & \alpha\in \Z^d \setminus (\Lambda^\perp).
   \end{cases}
\]
By \eqref{linf},  we can apply Proposition~\ref{thm:wf-function} for both $\mathcal{G}(g)$ and $\mathcal{W}(g)$. Hence, for any $f\in\mathcal D$,
\[
 \sum_{\phi \in \mathcal{G}(g)}\abs{\innerprod{f}{\phi}}^2 =  \sum_{\alpha\in \Lambda^\perp} \langle M_{t_{\alpha,\mathcal{G}}} T_{\alpha}  \hat{f}, \hat{f} \rangle  = 2 \sum_{\alpha\in \Z^d} \langle M_{t_{\alpha,\mathcal{W}}} T_{\alpha} \hat{f}, \hat{f} \rangle
 = 2 \sum_{\phi \in \mathcal{W}(g)}\abs{\innerprod{f}{\phi}}^2.
 \]
Now, suppose the Gabor system $\mathcal{G}(g)$ is a Bessel sequence with bound $b$. Then, for any $f\in \mathcal D$,
\[
2 \sum_{\phi \in \mathcal{W}(g)}\abs{\innerprod{f}{\phi}}^2 =  \innerprod{S_{\mathcal{G}}f}{f} \le b \norm{f}^2.
\]
Since $\mathcal{D}$ is dense in $L^2(\R^d)$, this inequality extends to all of $L^2(\R^d)$ which
shows that $\mathcal{W}(g)$ is a Bessel sequence with bound $b/2$ and
\begin{equation}\label{concl}
2 \innerprod{S_{\mathcal{G}}f}{f}  = \innerprod{S_{\mathcal{W}}f}{f}  \qquad\text{for all } f\in L^2(\R^d).
\end{equation}
Since the frame operator is positive and self-adjoint, we obtain $S_{\mathcal{G}} = 2 S_{\mathcal{W}}$. Conversely, assuming that $\mathcal{W}(g)$ is Bessel yields the same conclusion \eqref{concl}, which proves \eqref{item:1}.
It remains to show statements \eqref{item:2} and \eqref{item:3}.

Assume that the Wilson system is a Riesz basis
or an orthonormal basis. Then it is, in particular, also a frame or tight frame,
respectively. However, from the equality $S_{\mathcal{G}} = 2 S_{\mathcal{W}}$, it is clear
that the Gabor system $\mathcal{G}(g)$ is a frame with frame bounds
$a$ and $b$, if and only if the Wilson system $\mathcal{W}(g)$ is a
frame with frame bound $a/2$ and $b/2$.  Hence, it follows that the Gabor system $\mathcal{G}(g)$ is a frame
or tight frame, respectively. 

For the converse directions in statement \eqref{item:2} and \eqref{item:3} we have to work a bit harder. 
We first prove the ``only if''-direction in \eqref{item:3}. Assume therefore that the
Gabor system $\mathcal{G}(g)$ is a tight frame with frame bound $2$,
then, by \eqref{item:1}, the Wilson system is a tight frame with frame bound $1$. By Lemma \ref{le:unit-norm-parseval-frame-is-ONB}, it remains to show that 
\[ \Vert \tfrac{1}{\sqrt{2}} (M_{\gamma} \pm (-1)^{\vert \gamma \vert} M_{-\gamma})g \Vert_{2} = \Vert g \Vert_{2} = 1 \ \ \forall \ \gamma\in N \subset \Z^d.\]
To show this, it suffices to prove that $\{ M_{2\gamma} g\}_{\gamma\in \Z^d}$ is an orthogonal system. By Lemma~\ref{le:wexraz} this is true if the frequency shifts $\{ M_{2\gamma} g\}_{\gamma\in \Z^d}$ commute with the time frequency shifts used in the tight Gabor frame $\mathcal{G}$, i.e., 
\[ 
(M_{2\gamma})(T_{\lambda}M_{\gamma}) =
(T_{\lambda}M_{\gamma})(M_{2\gamma}) \ \ \text{for all} \ \
(\lambda,\gamma)\in \Lambda\times\Gamma,
\] 
where 
\[ 
\Lambda = \Z^d \cup \big( \mathbf{\tfrac{1}{2}} + \Z^d \big) \ \ \text{and} \ \ \Gamma =
\Z^d.
\] 
Indeed, by using the commutator relations $M_{b}T_{a}=e^{2\pi i \langle b,a\rangle} T_{a}M_{b}$, one finds that
\begin{align*}
 (M_{2\gamma})(T_{\lambda}M_{\gamma}) & = e^{2\pi i \langle 2\gamma, \lambda\rangle} (T_{\lambda}M_{\gamma})(M_{2\gamma}) 
\end{align*}
Observe that $\Lambda \subset \tfrac{1}{2} \Z^d$ and thus $2 \Z^d = 2\Gamma \subset \Lambda^{\perp}$. This implies that
indeed 
\[ 
(M_{2\gamma})(T_{\lambda}M_{\gamma}) =
(T_{\lambda}M_{\gamma})(M_{2\gamma}) \ \ \text{for all} \ \
(\lambda,\gamma)\in \Lambda\times\Gamma
\]
 and so all elements in the Wilson system $\mathcal{W}(g)$ have norm
 $1$ and by Lemma~\ref{le:unit-norm-parseval-frame-is-ONB} the system
 $\mathcal{W}(g)$ is a orthonormal basis for $L^{2}(\R^d)$. We have
 now proven \eqref{item:3}.

For the proof of the ``if''-direction in \eqref{item:2} we use the canonical Parseval frame argument as in \cite[Corollary 8.5.6]{MR1843717} which makes use of the result in \eqref{item:2}. More details will be given in the proof of Theorem \ref{th:sep-Wilson-Rd}.
\end{proof}

\section{A scale of Wilson systems} 
\label{sec:wilson-onb-from}

The simplest way of obtaining Wilson bases in higher dimensions is
through tensoring. However, this gives rise to $2^d$-modular covering
of the frequency domain which, as discussed in the introduction, is
often undesirable.  
Theorem~\ref{th:Wilson-Rd} shows that in any dimension one can
construct bimodular Wilson orthonormal bases in $L^{2}(\R^{d})$ from certain tight Gabor frames of redundancy $2$.
In this section we investigate intermediate $2^k$-modular covering of
the frequency domain for $k=1,\dots, d$. 

Let us start by reviewing the tensor construction for $d=2$.

\begin{example}\label{ex2d}
Let $g_{1},g_{2}\in L^{2}(\R)$ be unit norm functions that generate
tight Gabor frames $\{T_{n/2}M_{m}g_{k}\}_{m,n\in\Z}$, $k=1,2$ for
$L^{2}(\R)$. By letting $g(x,y):=g_1(x)g_2(y)$, the Gabor system
$\{T_{n/2}M_{m}g\}_{n\in\Z^2,m\in\Z^2}$ is a tight frame for
$L^{2}(\R^2)$ with density $1/4$, i.e., redundancy 4, and frame bound $4$. Moreover, the tensor product of the two associated one dimensional Wilson systems, which has the rather complicated form \eqref{eq:tensor-wilson}, is an orthonormal basis for $L^2(\R^{2})$.
\begin{equation}\label{eq:tensor-wilson}
\begin{aligned}
& \{ T_{n}g\}_{n\in\Z^2}  
\\
\cup \ & \{ \tfrac{1}{\sqrt{2}}T_{n} (M_{(m_1,0)}+(-1)^{m_1} M_{(-m_{1},0)})g\}_{n\in\Z^2,m_{1}\in\N} 
\\
\cup \ & \{ \tfrac{1}{\sqrt{2}}T_{n}T_{\tfrac{1}{2}(1 , 0)} (M_{(m_1,0)}-(-1)^{m_{1}} M_{(-m_{1},0)})g\}_{n\in\Z^2,m_{1}\in\N}  
\\
\cup \ & \{ \tfrac{1}{\sqrt{2}}T_{n} (M_{(0,m_2)}+(-1)^{m_{2}} M_{(0,-m_{2})})g\}_{n\in\Z^2,m_{2}\in\N} 
 \\
\cup \ & \{ \tfrac{1}{\sqrt{2}}T_{n}T_{\tfrac{1}{2}(0 ,1)} (M_{(0,m_2)}-(-1)^{m_2} M_{(0,-m_{2})})g\}_{n\in\Z^2,m_{2}\in\N} \\
\cup \ & \{ \tfrac{1}{2} T_{n}  (M_{(m_1,m_2)} + (-1)^{m_1} M_{(-m_1,m_2)}  \\
& \qquad \qquad \qquad + (-1)^{m_2} M_{(m_1,-m_2)} + (-1)^{m_1+m_2}M_{-(m_1,m_2)}) g \}_{n\in \Z^2,m\in\N^2} 
\\
\cup \ & \{ \tfrac{1}{2} T_{n} T_{\tfrac{1}{2} (1 , 0)} (M_{(m_1,m_2)} - (-1)^{m_1} M_{(-m_1,m_2)} \\
& \qquad \qquad \qquad + (-1)^{m_2} M_{(m_1,-m_2)} - (-1)^{m_1+m_2}M_{-(m_1,m_2)}) g \}_{n\in \Z^2,m\in\N^2} 
\\
\cup \ & \{ \tfrac{1}{2} T_{n} T_{\tfrac{1}{2} (0 , 1)} (M_{(m_1,m_2)} + (-1)^{m_1} M_{(-m_1,m_2)} 
\\
& \qquad \qquad \qquad  - (-1)^{m_2} M_{(m_1,-m_2)} - (-1)^{m_1+m_2}M_{-(m_1,m_2)}) g \}_{n\in \Z^2,m\in\N^2} 
\\
\cup \ & \{ \tfrac{1}{2} T_{n} T_{\tfrac{1}{2} (1 , 1)} (M_{(m_1,m_2)} - (-1)^{m_1} M_{(-m_1,m_2)} \\
& \qquad \qquad \qquad - (-1)^{m_2} M_{(m_1,-m_2)} + (-1)^{m_1+m_2}M_{-(m_1,m_2)}) g \}_{n\in \Z^2,m\in\N^2} 
\end{aligned}
\end{equation} 
\end{example}

It is  natural to ask if one can generalize this tensor construction allowing a non-separable generator $g$. However, it turns out that the answer to this question is negative. The fact that $g(x,y)=g_{1}(x)g_{2}(y)$ is essential. Indeed, the following example shows that one cannot avoid the separability of $g$.

\begin{example} Consider $\{T_{n/2}M_{m}g\}_{m,n\in\Z^2}$ where  $g \in L^2(\R^2)$ is such that $\hat g=\tfrac{1}{2} \mathds{1}_{D}$, with
\[ D = \{ (x,y)\in \R^{2} \, : \, 0\le x \le 4, 0\le y \le 2, -2+x \le y \le x\}.\] 
Note that $\Vert g \Vert_{2} = 1$. One can easily show that this function generates a tight Gabor frame with density $1/4$ and frame bound $4$. However, the Wilson system in \eqref{eq:tensor-wilson} is not an orthonormal basis. To see this, we apply Lemma~\ref{le:tq-tight-for-shift-invariant} which gives a characterization when the shift-invariant system \eqref{eq:tensor-wilson} is a Parseval frame. In particular, if $\alpha=(1,1)$, then a rather heavy calculation of autocorrelation function of the Wilson system \eqref{eq:tensor-wilson} shows that the necessary condition is that
\[ t_\alpha(\omega)= \sum_{m\in \Z^{2}} (-1)^{m_{1}+m_{2}} \overline{\hat{g}(\omega-m)} \hat{g}(\omega+m-\alpha ) = 0 \qquad\text{a.e. }\omega\in {\R}^2.
\]
However, one finds that
 \[ \sum_{m\in \Z^{2}} (-1)^{m_{1}+m_{2}} \overline{\hat{g}(\omega-m)} \hat{g}(\omega+m-\alpha ) = \frac{1}{2}   \ \ \text{for} \ \omega \in \Omega,\]
where $\Omega = \{ (x,y)\in \R^{2} \, : \, 1\le x \le 4, 1\le y \le 2, -2+x \le y \le x\}$.
Hence, the Wilson system in \eqref{eq:tensor-wilson} with $g$ given as above is not an orthonormal basis for $L^{2}(\R^{2})$.\end{example}

This example suggests that if one assumes that a function $g\in L^{2}(\R^d)$ is separable in all its variables, or more generally separable in the sense of Definition~\ref{sepfunc}, then one can formulate a generalization of Theorem~\ref{th:Wilson-Rd}. In the rest of this section we prove that this is the case. But first, we introduce some necessary concepts.

\begin{definition}\label{refl}
For a vector $\sigma \in \Z^d$ we define the reflection operator
\[ R_{\sigma}: \R^d \to \R^d, \ R_{\sigma}: x\mapsto \big( (-1)^{\sigma_{1}} x_{1}, (-1)^{\sigma_{2}} x_{2}, \ldots, (-1)^{\sigma_{d}} x_{d} \big).\]
On phase-space we define the reflection operator to act by reflecting each component
\[ 
\widetilde{R}_{\sigma} : \R^{2d} \to \R^{2d}, \ \widetilde{R}_{\sigma}: (x,\omega) \mapsto (R_{\sigma}x, R_{\sigma}\omega), \ \ x,\omega\in \R^{d}.
\]
\end{definition}
Clearly, $R_\sigma$ is the identity for $\sigma \in 2\Z^d$. Hence, the reflection operators $R_\sigma$ form a group $\Z^d/(2\Z^d)$, which is identified with its coset representatives $\{0,1\}^d$. For a fixed subgroup $G \subset \Z^d/(2\Z^d)$, we define the orbit of a point $x\in \R^d$ under $G$ to be the set \[ \text{orbit}(x) := \{ y \in \R^d \, : \, y = R_{\sigma}x, \sigma\in G \}.\]

\begin{definition}\label{sepG}
Define the support of  $\sigma \in \Z^d/(2\Z^d)$ by 
\[
\supp \sigma = \{ i \in [d]: \sigma_i \equiv 1 \mod 2 \}, \qquad\text{where }[d]=\{1,\ldots,d\}.
\]
We say that a subgroup $G \subset \Z^d/(2\Z^d)$ is {\it separable} if there exists generators $\sigma^1,\ldots,\sigma^k$ of $G$ such that
\[
\supp \sigma^i \cap \supp \sigma^j = \emptyset \qquad\text{for }i\ne j.
\]
\end{definition}
It follows that  a separable group $G$ is uniquely determined by a collection of non-empty disjoint sets $S_i = \supp \sigma^i \subset [d]$, $i=1,\ldots, k$. In general, the set $S_0= [d] \setminus \bigcup_{i=1}^k S_i$ might be non-empty.

\begin{definition}\label{sepfunc}
For any subset $S=\{s_1<\ldots<s_m\} \subset [d]$, let $P_S: \R^d \to \R^{m}$ be the coordinate projection given by
\[
P_S(x_1,\ldots,x_d) = (x_{s_1},\ldots,x_{s_m}) \qquad x\in \R^d.
\]
We say that a function $g: \R^d \to \C$ is separable with respect to a separable group $G$, if there there exists functions 
$g_i: \R^{|S_i|} \to \C$, $i=0,\ldots,k$, such that
\begin{equation}\label{sep3}
g(x) = \prod_{i=0}^k g_i \circ P_{S_i} (x) \qquad\text{for } x\in \R^d.
\end{equation}
\end{definition}

We also need the following elementary lemma.

\begin{lemma}\label{dual} Let $G \subset \Z^d/(2\Z^d)$ be a separable group as in Definition~\ref{sepG}. Define the lattice 
\[
\Lambda=  \bigcup G = \bigcup_{\sigma\in G} (\sigma+2\Z^d) .
\]
Then $G$ and its dual group $\widehat G$ can be identified as
\begin{equation}\label{dual1}
G \cong \Lambda/(2\Z^d) \qquad\text{and}\qquad
\widehat G \cong \Z^d/(2\Lambda^\perp),
\end{equation}
where $\Lambda^\perp$ is the dual lattice (annihilator) of $\Lambda$. The duality pairing $\langle \cdot,\cdot \rangle_*$ between elements in $\widehat G$ and $G$ is given by
\begin{equation}\label{dual2}
\langle \alpha + 2\Lambda^\perp, \sigma + 2\Z^d \rangle_* = (-1)^{\langle \alpha, \sigma \rangle} 
\qquad\text{for }\alpha\in \Z^d, \sigma \in \Lambda.
\end{equation}

Moreover, $G$ is self-dual and there exists a canonical isomorphism $I:G \to \widehat G$ satisfying
\begin{equation}\label{dual3}
\langle I(\sigma^i),\sigma^j \rangle \equiv \delta_{i,j} \mod 2,
\end{equation}
where $\sigma^i$, $i=1,\ldots,k$, are generators as in Definition~\ref{sepG}. In particular,
\begin{equation}\label{dual4}
\langle I(\sigma),h \rangle \equiv \langle \sigma, I(h) \rangle \mod 2 \qquad\text{for all }\sigma, h \in G.
\end{equation}
\end{lemma}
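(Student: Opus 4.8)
The plan is to work first with the lattice identifications in \eqref{dual1} and then deduce the pairing and self-duality. For the first identity, observe that by definition $\Lambda = \bigcup_{\sigma\in G}(\sigma+2\Z^d)$ is a union of cosets of $2\Z^d$, one for each element of $G$ (using that $G\subset\Z^d/(2\Z^d)$ is already described by coset representatives in $\{0,1\}^d$); that $\Lambda$ is a genuine lattice follows since $G$ is a group under addition mod $2$. Thus $\Lambda/(2\Z^d)\cong G$ canonically. For the second identity, I would compute $\Lambda^\perp$ directly: $n\in\Lambda^\perp$ iff $\langle n,\sigma\rangle\in\Z$ for all $\sigma\in\Lambda$, and since $\Z^d\subset\Lambda$ (as $0\in G$ forces $2\Z^d\subset\Lambda$, but actually one needs $\Z^d\subset\Lambda$ — this holds because the generators $\sigma^1,\dots,\sigma^k$ together with the standard basis vectors supported in $S_0$ generate all of $\Z^d/(2\Z^d)$... wait, this is false in general). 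Let me reconsider: $\Lambda\supset 2\Z^d$ always, and $\Lambda^\perp\subset (2\Z^d)^\perp=\tfrac12\Z^d$; more precisely $\Lambda^\perp=\{n\in\tfrac12\Z^d:\langle n,\sigma\rangle\in\Z\ \forall\sigma\in G\}$. Then $2\Lambda^\perp=\{m\in\Z^d:\langle m,\sigma\rangle\in2\Z\ \forall\sigma\in G\}$, and I claim $\Z^d/(2\Lambda^\perp)\cong\widehat G$ via $m+2\Lambda^\perp\mapsto(\sigma\mapsto(-1)^{\langle m,\sigma\rangle})$: this map is a well-defined homomorphism into $\widehat G$ by construction of $2\Lambda^\perp$, it is injective by definition of $2\Lambda^\perp$, and it is surjective by comparing cardinalities, $|\Z^d/(2\Lambda^\perp)|=[\,2\Z^d:2\Lambda^\perp\,]^{-1}|\Z^d/2\Z^d|$... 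I would instead argue surjectivity via the non-degeneracy of the standard $\Z/2$-pairing on $(\Z/2\Z)^d$ restricted to the subgroup $G$. This simultaneously establishes \eqref{dual2}.

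Next, for self-duality and the canonical isomorphism $I$, I would use the separability hypothesis crucially. Because $G$ has generators $\sigma^1,\dots,\sigma^k$ with pairwise disjoint supports $S_i$, the group $G$ is a direct sum of the $k$ cyclic groups $\langle\sigma^i\rangle\cong\Z/2\Z$, and the standard $\Z/2$-bilinear form $(\sigma,h)\mapsto\langle\sigma,h\rangle\bmod2$ restricted to $G$ is represented, in the basis $\{\sigma^1,\dots,\sigma^k\}$, by the diagonal matrix with entries $\langle\sigma^i,\sigma^i\rangle\bmod2$. Here I need $\langle\sigma^i,\sigma^i\rangle\equiv1\bmod2$, i.e.\ each $\sigma^i$ has an odd number of odd components — but a generator could have even support size, in which case $\langle\sigma^i,\sigma^i\rangle\equiv0$; in that case one replaces $\sigma^i$ by a sum of one of its "coordinate pieces" to split it further, so without loss of generality I may assume each $|S_i|$ is odd by refining the generating set, or more cleanly I pick, inside each $S_i$, the generators to be single standard basis vectors when possible. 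Granting that the form is non-degenerate on $G$ (which I should check is forced, or else adjust the statement's generators), define $I:G\to\widehat G$ by $I(\sigma^i):=$ the dual-basis element, i.e.\ the character $\sigma^j\mapsto(-1)^{\delta_{ij}}$; this is an isomorphism and satisfies \eqref{dual3} by definition. Finally \eqref{dual4} follows because, writing $\sigma=\sum a_i\sigma^i$ and $h=\sum b_j\sigma^j$ with $a_i,b_j\in\{0,1\}$, both $\langle I(\sigma),h\rangle$ and $\langle\sigma,I(h)\rangle$ equal $\sum_i a_ib_i\bmod2$ by \eqref{dual3} and bilinearity.

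The main obstacle I anticipate is the bookkeeping around whether the standard $\Z/2$-form is automatically non-degenerate on a separable $G$ and whether the generators can always be taken with $\langle\sigma^i,\sigma^i\rangle\equiv1$; this is really a statement that any separable subgroup of $(\Z/2\Z)^d$ admits a "symplectic-free orthonormal" generating set for the standard form, which needs a short argument (splitting even-support generators, or absorbing them into $S_0$). Once that normalization is in place, everything else is a direct unwinding of definitions, and I would keep those computations terse, citing only the non-degeneracy of the standard bilinear form on $(\Z/2\Z)^d$ and elementary duality of finite abelian groups. Everything should be phrased so that the $S_0$ part (the unreflected coordinates) simply does not interact with $G$ or $\Lambda^\perp$, which is consistent with the remark following Definition~\ref{sepfunc}.
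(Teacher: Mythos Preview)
Your overall strategy is close to the paper's, and the computations for \eqref{dual1}, \eqref{dual2}, and \eqref{dual4} are essentially correct (the paper invokes a general lattice duality fact $\widehat{\Gamma_2/\Gamma_1}\cong \Gamma_1^\perp/\Gamma_2^\perp$ rather than computing $2\Lambda^\perp$ by hand, but your direct approach is fine).

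The passage on constructing $I$, however, contains a genuine wrong turn. Your first attempt --- taking $I$ to be induced by the restriction to $G$ of the standard $\Z/2$-form, i.e.\ $I(\sigma)=$ the character $h\mapsto(-1)^{\langle\sigma,h\rangle}$ --- fails exactly when some $|S_i|$ is even, as you notice. Your proposed fixes (``refine the generating set'', ``split $\sigma^i$ into coordinate pieces'', ``take single standard basis vectors'') do \emph{not} work: replacing $\sigma^i$ by a proper subvector supported in $S_i$ changes the group $G$, since such a subvector need not lie in $G$ at all. So one cannot normalize the generators to have odd support without altering the hypothesis.

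The good news is that your second definition --- $I(\sigma^i):=$ the dual-basis character $\sigma^j\mapsto(-1)^{\delta_{ij}}$ --- is correct and requires no non-degeneracy hypothesis on the restricted form; it only uses that $G\cong(\Z/2\Z)^k$ is abstractly self-dual together with the already-established identification $\widehat G\cong\Z^d/(2\Lambda^\perp)$. The paper makes exactly this choice concrete: pick any $n_i\in S_i$ and set $I(\sigma^i):=\delta_{n_i}+2\Lambda^\perp$. Then $\langle I(\sigma^i),\sigma^j\rangle=(\sigma^j)_{n_i}$, which is $1$ if $i=j$ (since $n_i\in S_i=\supp\sigma^i$) and $0$ if $i\ne j$ (since the $S_i$ are disjoint). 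This is the clean, one-line resolution of the obstacle you flagged; separability is used precisely here, and no adjustment of the generators is needed. With $I$ so defined, your bilinearity argument for \eqref{dual4} goes through verbatim. I would drop the discussion of the restricted form entirely and go straight to this explicit $\delta_{n_i}$ construction.
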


\begin{proof}
Observe that
\[
2\Z^d \subset \Lambda \subset \Z^d, \qquad
2\Z^d \subset 2\Lambda^\perp \subset \Z^d.
\]
To prove \eqref{dual1}, we can use the following general fact. If $\Gamma_1 \subset \Gamma_2$ are two (full rank) lattices in $\R^d$, then we have group isomorphism
\begin{equation}\label{dual5}
\widehat{\Gamma_2/\Gamma_1} \cong (\Gamma_1)^\perp /(\Gamma_2)^\perp.
\end{equation}
This is a consequence of the duality theorem \cite[Theorem 2.1.2]{MR1038803} since
\[
\widehat{\Gamma_2/\Gamma_1} \cong \operatorname{Ann}(\hat \Gamma_2,\Gamma_1) \cong \operatorname{Ann}(\R^d/( \Gamma_2)^\perp,\Gamma_1) \cong (\Gamma_1)^\perp /(\Gamma_2)^\perp,
\]
where $\operatorname{Ann}(\hat \Gamma_2,\Gamma_1)$ denotes the annihilator of a subgroup $\Gamma_1$ in $\hat \Gamma_2$. Applying the above to $\Gamma_1=2\Z^d$ and $\Gamma_2 =\Lambda$ yields \eqref{dual1}
\[
\widehat {\Lambda/(2\Z^d)} \cong (\tfrac 12\Z^d)/\Lambda^\perp
\cong  \Z^d/(2\Lambda^\perp).
\]

To prove the pairing \eqref{dual2}, note that any $\alpha+2\Lambda^\perp$ defines a character on $G$ by \eqref{dual2}. Since $G$ is assumed to be separable, we can explicitly identify the dual lattice of $\tfrac{1}{2}\Lambda$ as
\begin{equation}\label{dual7}
2\Lambda^\perp = \bigg\{ \alpha \in \Z^d: \sum_{j\in S_i} \alpha_j \equiv 0 \mod 2 \qquad\text{for }i=1,\ldots, k \bigg\}.
\end{equation}
Hence, if $\alpha \not\in 2\Lambda^\perp$, then \eqref{dual2} defines a non-trivial character on $G$. Thus, all characters on $G$ must be of this form.

For every $i=1,\ldots, k$, choose $n_i \in S_i$. Define the mapping $I$ first on generators
\begin{equation}\label{dual9}
I(\sigma^i)=  \delta_{n_i}+2\Lambda^\perp
\qquad\text{for } i=1,\ldots,k,
\end{equation}
and then extend it to a group homomorphism $I:G \to \hat G$. This is well-defined since all non-trivial elements of $\widehat G$ have torsion $2$. To show, that this is an isomorphism take any non-trivial element $\sigma \in G$ of the form $\sigma=\sum_{i=1}^k c_i \sigma^i$, where $c_i=0,1$. Then,
\[
I(\sigma)= \alpha+ 2\Lambda^\perp, \qquad\text{where }
\alpha=\sum_{i=1}^k c_i \delta_{n_i}.
\]
Since $c_i=1$ for some $i$, by \eqref{dual7} $\alpha \not \in 2\Lambda^\perp$. Hence, $I$ is $1-1$ and thus an isomorphism. 

Finally, \eqref{dual3} follows immediately from \eqref{dual9}. Likewise, by \eqref{dual9} we have for any $\sigma,h\in G$,
\[
\langle I(\sigma),h \rangle \equiv \sum_{i=1}^k \sigma_{n_i}h_{n_i} \equiv \langle \sigma, I(h) \rangle \mod 2.
\]
This completes the proof of the lemma.
\end{proof}

In light of Lemma~\ref{dual} we shall slightly abuse the notation by identifying elements of $\widehat G$ with some fixed choice of coset representatives of $\Z^d/(2\Lambda^\perp)$. We are now ready to formulate the main result of this section. 

\begin{theorem} \label{th:sep-Wilson-Rd} 
Let $G \subset \Z^d/(2\Z^d)$ be a separable group with $k$ generators and thus of order $2^{k}$. Furthermore, let $g$ be a function in $L^{2}(\R^{d})$ and
let $N$ be a subset of $\Z^d$ such that
\[ \vert N \cap \textnormal{orbit}(x) \vert = 1 \ \ \forall x \in \Z^d. \]
For  each  $\gamma\in N$, set $c_{\gamma}= 2^{-k} |\textnormal{orbit}(\gamma)|^{1/2}$.
Consider the Gabor system
\[ \mathcal{G}(g, G) = \{T_{\lambda} M_{\gamma} g \}_{\lambda\in \tfrac{1}{2}\Lambda, \gamma\in \Z^d},
\qquad\text{where }\Lambda = \cup_{\sigma\in G} (\sigma + 2 \Z^{d}),
\]
and the Wilson system
\begin{equation} \label{eq:1711b} \mathcal{W}(g,G) = \{ T_{\lambda} T_{\tfrac{1}{2}h} c_{\gamma} \sum_{\sigma\in G} (-1)^{\langle I(h)+\gamma, \sigma\rangle} M_{R_{\sigma}\gamma} g \}_{\lambda\in \Z^d, h\in G, \gamma\in N}.\end{equation}
If $g$ is separable with respect to $G$ and $\hat{g}(\omega) = \overline{\hat{g}(\omega)}$, then the following holds:
\begin{enumerate}[(i)]
\item
\label{item:6}
 The Gabor system $\mathcal{G}(g,G)$ has Bessel bound $b$ if and only if the Wilson system $\mathcal{W}(g,G)$ has Bessel bound $2^{k}b$. In either (and hence both cases) the Gabor frame operator $S_{\mathcal{G}}$ and the Wilson frame operator $S_{\mathcal{W}}$ satisfy
\[ S_{\mathcal{G}} = 2^{k} S_{\mathcal{W}}.\]
\item The Gabor system $\mathcal{G}(g,G)$ is a frame for $L^{2}(\R^{d})$
  with bounds $a$ and $b$, if and only if the Wilson system
  $\mathcal{W}(g,G)$ is a Riesz basis for $L^{2}(\R^{d})$ with bounds
  $2^{-k}a$ and $2^{-k}b$. \label{item:7}
\item The Gabor system $\mathcal{G}(g,G)$ is a tight frame for
  $L^{2}(\R^{d})$ with frame bound $a=2^{k}$ if and only if the Wilson
  system $\mathcal{W}(g,G)$ is an orthonormal basis for
  $L^{2}(\R^{d})$. \label{item:8}
\end{enumerate}
\end{theorem}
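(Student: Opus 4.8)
The plan is to mimic the proof of Theorem~\ref{th:Wilson-Rd}, with the two‑element group $\{0,\mathbf 1\}$ replaced by the separable group $G$ and with the identifications of Lemma~\ref{dual} (the pairing on $\widehat G\cong\Z^d/(2\Lambda^\perp)$ and the isomorphism $I$) doing the combinatorial bookkeeping. First I would regard $\mathcal G(g,G)$ as a shift‑invariant system along the lattice $\tfrac12\Lambda$ (of covolume $2^{-k}$, so $(\tfrac12\Lambda)^\perp=2\Lambda^\perp$) with generators $\{M_\gamma g\}_{\gamma\in\Z^d}$, and $\mathcal W(g,G)$ as a shift‑invariant system along $\Z^d$ with generators $g_{h,\gamma}=T_{\mathbf{\frac12}h}\,c_\gamma\sum_{\sigma\in G}(-1)^{\langle I(h)+\gamma,\sigma\rangle}M_{R_\sigma\gamma}g$, $h\in G$, $\gamma\in N$. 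From \eqref{eq:t-alpha} one gets at once $t_{\alpha,\mathcal G}(\omega)=2^k\sum_{z\in\Z^d}\hat g(\omega-z)\overline{\hat g(\omega-z-\alpha)}$ for $\alpha\in 2\Lambda^\perp$. The analogue of Lemma~\ref{le:wilson-and-gabor-tq}(ii) to be established is
\[
t_{\alpha,\mathcal W}(\omega)=\begin{cases}\sum_{z\in\Z^d}\hat g(\omega-z)\overline{\hat g(\omega-z-\alpha)}=2^{-k}t_{\alpha,\mathcal G}(\omega)&\text{if }\alpha\in 2\Lambda^\perp,\\ 0&\text{if }\alpha\in\Z^d\setminus 2\Lambda^\perp.\end{cases}
\]
To see this I would expand $t_{\alpha,\mathcal W}$ and sum over $h\in G$ first: the resulting phase $(-1)^{\langle h,\alpha\rangle+\langle I(h),\sigma+\sigma'\rangle}$ together with \eqref{dual2}--\eqref{dual4} and orthogonality of characters on $G$ makes the $h$‑sum vanish unless the two reflection indices satisfy $\sigma+\sigma'=\tau$, where $\tau:=I^{-1}(\alpha+2\Lambda^\perp)\in G$, in which case it contributes the factor $2^k$; collapsing the remaining sum over $\sigma$ onto $\textnormal{orbit}(\gamma)$ and absorbing the count $c_\gamma^2\,|\{\sigma\in G:R_\sigma\gamma=\gamma\}|=2^{-k}$ (using that $N$ meets each $G$‑orbit in $\Z^d$ exactly once) then yields $t_{\alpha,\mathcal W}(\omega)=\sum_{z\in\Z^d}(-1)^{\langle z,\tau\rangle}\hat g(\omega-z)\overline{\hat g(\omega-\alpha-R_\tau z)}$. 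For $\alpha\in 2\Lambda^\perp$ we have $\tau=0$ and this is the first line.

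The crux, and the only place the separability of $g$ enters, is the case $\alpha\in\Z^d\setminus 2\Lambda^\perp$, equivalently $\tau\ne 0$. Writing $\supp\tau=\bigcup_{i\in A}S_i$ with $A\ne\emptyset$, formula \eqref{dual7} says that $(-1)^{|P_{S_i}\alpha|}=-1$ exactly for $i\in A$. Since $\hat g$ is real‑valued the conjugate may be dropped, and since $g$ is separable with respect to $G$ we have $\hat g(\xi)=\prod_{i=0}^k\hat g_i(P_{S_i}\xi)$, so the sum over $z$ factors over the blocks $S_0,\dots,S_k$; the factor attached to a block $i\in A$ has the form $\sum_{w\in\Z^{|S_i|}}(-1)^{|w|}\hat g_i(\eta-w)\hat g_i(\eta-\beta+w)$ for suitable $\eta\in\R^{|S_i|}$ and $\beta=P_{S_i}\alpha$ with $(-1)^{|\beta|}=-1$. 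The substitution $w\mapsto\beta-w$ — precisely the reflection trick used at the end of the proof of Lemma~\ref{le:wilson-and-gabor-tq} — shows this factor equals its own negative, hence is $0$, so $t_{\alpha,\mathcal W}\equiv 0$. (The example preceding Definition~\ref{refl} shows this step genuinely requires separability.)

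With both families of autocorrelation functions in hand, Proposition~\ref{thm:wf-function} applied to $\mathcal G(g,G)$ and to $\mathcal W(g,G)$ gives, for $f\in\mathcal D$,
\[
\sum_{\phi\in\mathcal G(g,G)}|\langle f,\phi\rangle|^2=\sum_{\alpha\in 2\Lambda^\perp}\langle M_{t_{\alpha,\mathcal G}}T_\alpha\hat f,\hat f\rangle=2^{k}\sum_{\alpha\in\Z^d}\langle M_{t_{\alpha,\mathcal W}}T_\alpha\hat f,\hat f\rangle=2^{k}\sum_{\psi\in\mathcal W(g,G)}|\langle f,\psi\rangle|^2,
\]
after first checking, as at the start of the proof of Theorem~\ref{th:Wilson-Rd}, that a Bessel hypothesis on either side forces $\sum_\gamma|\hat g(\cdot-\gamma)|^2\in L^\infty$, which is what is needed to invoke Proposition~\ref{thm:wf-function} (conditions \eqref{eq:cald-bounded}, \eqref{eq:lic}). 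Density of $\mathcal D$ together with positivity and self‑adjointness of the frame operators then gives part~(\ref{item:6}), namely $S_{\mathcal G}=2^kS_{\mathcal W}$ and the corresponding Bessel‑bound correspondence; and from $S_{\mathcal G}=2^kS_{\mathcal W}$ the implications ``Riesz basis $\Rightarrow$ frame'' in (\ref{item:7}) and ``orthonormal basis $\Rightarrow$ tight frame'' in (\ref{item:8}), with the frame bounds scaled by $2^{-k}$, are immediate.

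It remains to prove the two converse implications. For (\ref{item:8}): if $\mathcal G(g,G)$ is tight with bound $2^k$ then $t_{0,\mathcal G}=2^k$ forces $\|g\|_2=1$, and $\mathcal W(g,G)$ is a Parseval frame by (\ref{item:6}); for fixed $\gamma$ the differences $R_\sigma\gamma-R_{\sigma'}\gamma$ lie in $2\Z^d\subset 2\Lambda^\perp=(\tfrac12\Lambda)^\perp$, so the modulations $M_{R_\sigma\gamma-R_{\sigma'}\gamma}$ commute with every time‑frequency shift of the tight Gabor frame $\mathcal G(g,G)$, whence Lemma~\ref{le:wexraz} makes $\{M_{R_\sigma\gamma}g\}_{\sigma\in G}$ orthogonal and (as $\|g\|_2=1$) orthonormal; a short computation with $c_\gamma$ and $|\textnormal{orbit}(\gamma)|$ then shows every nonzero vector of $\mathcal W(g,G)$ has norm $1$, and Lemma~\ref{le:unit-norm-parseval-frame-is-ONB} finishes. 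For (\ref{item:7}) I would run the canonical Parseval frame argument of \cite[Corollary~8.5.6]{MR1843717}: set $S:=S_{\mathcal W}=2^{-k}S_{\mathcal G}$; because $g$ is separable, $\mathcal G(g,G)$ is a tensor product of a critically sampled Gabor system in the $S_0$‑variables and of $k$ bimodular Gabor systems (one per block $S_1,\dots,S_k$) of the type covered by Theorem~\ref{th:Wilson-Rd}, so $S$ acts block‑wise, $S^{-1/2}$ commutes with $T_\lambda$ ($\lambda\in\tfrac12\Lambda\supset\Z^d$) and with $M_\gamma$ ($\gamma\in\Z^d$), and $S^{-1/2}$ carries $\mathcal W(g,G)$ onto $\mathcal W(g',G)$ and $\mathcal G(g,G)$ onto its canonical Parseval frame $\mathcal G(2^{-k/2}g',G)$, where $g':=S^{-1/2}g$. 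The block structure shows $g'$ is again separable with respect to $G$, and block‑wise symmetry preservation (as in Example~\ref{ex:thm27-nice-generators}, cf.\ \eqref{eq:symmetry}) gives $\widehat{g'}$ real‑valued; hence $\mathcal G(g',G)$ is tight with bound $2^k$, so by (\ref{item:8}) $\mathcal W(g',G)$ is an orthonormal basis and $\mathcal W(g,G)=S^{1/2}\mathcal W(g',G)$ is a Riesz basis with the stated bounds. The principal obstacle is the vanishing $t_{\alpha,\mathcal W}\equiv 0$ for $\alpha\notin 2\Lambda^\perp$: one must read off via Lemma~\ref{dual} which block sums of $\alpha$ are odd and exploit that separability makes the sum factor so the one‑dimensional reflection trick applies; a secondary nuisance is the orbit/stabiliser bookkeeping, since for $\gamma$ lying in a coordinate subspace complementary to some $S_i$ (in particular $\gamma=0$) several vectors listed in \eqref{eq:1711b} coincide or are identically zero, and one must check that the surviving nonzero ones are unit vectors of the correct total multiplicity.
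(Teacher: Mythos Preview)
Your proposal is correct and follows essentially the same route as the paper's proof: the computation of the autocorrelation functions via summing over $h\in G$ first and invoking character orthogonality through Lemma~\ref{dual} (your $\tau$ is the paper's $\tilde\alpha$), the block factorisation using separability followed by the reflection substitution $w\mapsto\beta-w$ to kill the $\alpha\notin 2\Lambda^\perp$ case, the use of Proposition~\ref{thm:wf-function} to obtain $S_{\mathcal G}=2^kS_{\mathcal W}$, the Wexler--Raz orthogonality (Lemma~\ref{le:wexraz}) combined with orbit--stabiliser counting for the unit-norm check in~(\ref{item:8}), and the canonical Parseval argument with block-wise action of $S^{-1/2}$ for~(\ref{item:7}) are all exactly what the paper does (the paper packages the norm computation via Lemma~\ref{gcal}, but this is the ``short computation'' you allude to).
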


\begin{remark}
Note that the Wilson system $\mathcal W(g, G)$ corresponding to the choice of  $G$ being the trivial subgroup is simply a Gabor system $\{T_{\lambda} M_{\gamma} g \}_{\lambda\in \Z^d, \gamma\in \Z^d}$. Hence, the statements of Theorem~\ref{th:sep-Wilson-Rd}\eqref{item:6} are trivial for $k=0$. In contrast, the Wilson system corresponding to the maximal group $G=\Z^d/(2\Z^d)$, where $k=d$, for appropriate choice of $N \subset \Z^d$, is a tensor product of one dimensional Wilson systems as in Example~\ref{ex2d}. Moreover, observe that the choice of a subgroup $G$ generated by $\sigma=(1,\ldots,1)$ yields the same Wilson system as that in Theorem~\ref{th:Wilson-Rd}.
\end{remark}

The following density-type theorem for Wilson system is an easy consequence
of Theorem~\ref{th:sep-Wilson-Rd}.
\begin{corollary}
  If $\mathcal{W}(g,G)$ is a frame for $L^2(\R^d)$ with bounds $a$ and $b$,
  then  $\mathcal{W}(g,G)$ is a Riesz basis for $L^2(\R^d)$ with bounds $a$ and $b$.
\end{corollary}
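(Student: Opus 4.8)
The plan is to read everything off Theorem~\ref{th:sep-Wilson-Rd} by tracking frame bounds through the scalar identity between the two frame operators; we work, as in that theorem, under the standing assumptions that $g$ is separable with respect to $G$ and $\hat g(\omega)=\overline{\hat g(\omega)}$, since $\mathcal{W}(g,G)$ refers to exactly that setup.

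First, a frame is in particular a Bessel sequence, so the hypothesis that $\mathcal{W}(g,G)$ is a frame with bounds $a$ and $b$ gives that $\mathcal{W}(g,G)$ is Bessel. By Theorem~\ref{th:sep-Wilson-Rd}\eqref{item:6}, the Gabor system $\mathcal{G}(g,G)$ is then also Bessel, and the two (bounded, positive, self-adjoint) frame operators satisfy $S_{\mathcal{G}}=2^{k}S_{\mathcal{W}}$ on $L^{2}(\R^{d})$.

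Next, the frame inequality for $\mathcal{W}(g,G)$ with bounds $a,b$ is equivalent to the operator inequality $a\,\mathrm{Id}\le S_{\mathcal{W}}\le b\,\mathrm{Id}$. Multiplying by the positive scalar $2^{k}$ and using $S_{\mathcal{G}}=2^{k}S_{\mathcal{W}}$ yields $2^{k}a\,\mathrm{Id}\le S_{\mathcal{G}}\le 2^{k}b\,\mathrm{Id}$, that is, $\mathcal{G}(g,G)$ is a frame for $L^{2}(\R^{d})$ with bounds $2^{k}a$ and $2^{k}b$. Now apply the ``if''-direction of Theorem~\ref{th:sep-Wilson-Rd}\eqref{item:7} to these Gabor frame bounds: it asserts that $\mathcal{W}(g,G)$ is then a Riesz basis for $L^{2}(\R^{d})$ with bounds $2^{-k}\cdot 2^{k}a=a$ and $2^{-k}\cdot 2^{k}b=b$, which is exactly the claim.

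There is essentially no obstacle here beyond Theorem~\ref{th:sep-Wilson-Rd} itself; the only nontrivial ingredient being borrowed is the ``if''-part of \eqref{item:7}, which is precisely the implication that upgrades a frame hypothesis to a Riesz-basis conclusion (its proof runs through the canonical Parseval-frame and commutation argument already sketched for Theorem~\ref{th:Wilson-Rd}). The corollary is then pure bookkeeping: a frame for $\mathcal{W}(g,G)$ forces a frame for $\mathcal{G}(g,G)$ via $S_{\mathcal{G}}=2^{k}S_{\mathcal{W}}$, and a frame for $\mathcal{G}(g,G)$ forces a Riesz basis for $\mathcal{W}(g,G)$ via \eqref{item:7}, with the factor $2^{k}$ cancelling so that the Riesz bounds coincide with the original frame bounds $a$ and $b$.
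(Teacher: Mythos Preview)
Your argument is correct and follows essentially the same route as the paper: use Theorem~\ref{th:sep-Wilson-Rd}\eqref{item:6} to deduce that $\mathcal{G}(g,G)$ is a frame with bounds $2^{k}a$ and $2^{k}b$, then apply Theorem~\ref{th:sep-Wilson-Rd}\eqref{item:7} to conclude that $\mathcal{W}(g,G)$ is a Riesz basis with bounds $a$ and $b$. The paper states this in two sentences; you have merely spelled out the intermediate operator inequality $a\,\mathrm{Id}\le S_{\mathcal{W}}\le b\,\mathrm{Id}$ and the scaling more explicitly.
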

\begin{proof}
  If $\mathcal{W}(g,G)$ is a frame for $L^2(\R^d)$ with bounds $a$ and $b$,
  then, by Theorem~\ref{th:sep-Wilson-Rd}\eqref{item:6}, so is
  $\mathcal{G}(g,G)$ with bounds $2^{k}a$ and $2^{k}b$. The conclusion now follows from Theorem~\ref{th:sep-Wilson-Rd}\eqref{item:7}.
\end{proof}

Before proceeding with the proof we need to emphasize that some of the functions appearing in the Wilson system \eqref{eq:1711b} are zero. Hence, they should be disregarded due to the cancellation that might happen for some choices of $h\in G $ and $\gamma \in N$. This is a consequence of the following elementary lemma.

\begin{lemma}\label{gcal}
Let $\gamma \in N$ and $h\in G$. Let  $G_\gamma = \{\sigma \in G: R_\sigma \gamma =\gamma\}$ be the stabilizer of $\gamma$. Consider a character $\chi \in \widehat G_\gamma$ given by 
\[
\chi(\sigma) = (-1)^{\langle I(h)+\gamma,\sigma \rangle} \qquad \text{for }\sigma \in G_\gamma.
\]
Then for any $\sigma_0 \in G$, the following sum over a coset of the quotient group $G/G_\gamma$ satisfies
\begin{equation}\label{gcal2}
\sum_{\sigma \in \sigma_0 +G_\gamma} (-1)^{\langle I(h)+\gamma,\sigma \rangle} =
\begin{cases} \pm |G_\gamma| & \chi \equiv 1,\\
0 & \textnormal{otherwise.} 
\end{cases}
\end{equation}
\end{lemma}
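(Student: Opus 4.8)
The plan is to recognise \eqref{gcal2} as an instance of the orthogonality relations for characters of the finite abelian group $G_\gamma$, after first peeling off a unimodular prefactor coming from the coset representative $\sigma_0$. So the whole argument is a short character-sum computation; there is essentially no analytic content, only the bookkeeping inherited from Lemma~\ref{dual}.

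Concretely, I would proceed in three steps. First, I would check that $\chi$ is a genuine, well-defined character of $G_\gamma$: the map $\sigma\mapsto(-1)^{\langle I(h)+\gamma,\sigma\rangle}$ is multiplicative on all of $G$, and by Lemma~\ref{dual} (using that $2\Lambda^\perp$ annihilates $\Lambda$ modulo $2$ and $2\Z^d$ annihilates $\Z^d$) the exponent $\langle I(h)+\gamma,\sigma\rangle$ depends only on the class of $\sigma$ in $G\cong\Lambda/(2\Z^d)$, so its restriction to $G_\gamma$ defines $\chi\in\widehat{G_\gamma}$. Second, writing a general element of the coset $\sigma_0+G_\gamma$ as $\sigma=\sigma_0+\tau$ with $\tau\in G_\gamma$, I would factor
\[
\sum_{\sigma\in\sigma_0+G_\gamma}(-1)^{\langle I(h)+\gamma,\sigma\rangle}
= (-1)^{\langle I(h)+\gamma,\sigma_0\rangle}\sum_{\tau\in G_\gamma}\chi(\tau),
\]
where the prefactor $(-1)^{\langle I(h)+\gamma,\sigma_0\rangle}$ lies in $\{\pm1\}$. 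Third, I would evaluate $\sum_{\tau\in G_\gamma}\chi(\tau)$ by the standard argument: if $\chi\equiv1$ it equals $\abs{G_\gamma}$, while if $\chi\not\equiv1$ one picks $\tau_0\in G_\gamma$ with $\chi(\tau_0)\ne1$, uses that $\tau\mapsto\tau_0+\tau$ permutes $G_\gamma$ to get $\chi(\tau_0)\sum_{\tau}\chi(\tau)=\sum_{\tau}\chi(\tau)$, and concludes $\sum_{\tau}\chi(\tau)=0$. Combining with the displayed factorisation yields exactly \eqref{gcal2}.

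I do not expect a real obstacle here. The only point that needs a little care is making sure all the pairings $(-1)^{\langle\cdot,\cdot\rangle}$ are well-defined on the relevant quotient groups and that $\chi$ therefore makes sense as a character of $G_\gamma$; this is precisely what Lemma~\ref{dual} and the identification of $\widehat G$ with $\Z^d/(2\Lambda^\perp)$ provide, so it amounts to citing that lemma rather than proving anything new.
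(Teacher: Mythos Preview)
Your proposal is correct and follows essentially the same approach as the paper: the paper invokes the orthogonality relations for characters of a finite abelian group (citing \cite[Lemma (23.19)]{HeRo}) together with Lemma~\ref{dual} to handle the case $\sigma_0=0$, and then remarks that a general $\sigma_0$ contributes only an additional $\pm 1$ factor. You make both of these steps explicit---writing out the coset factorisation and reproving the character-sum identity---so your argument is a slightly more self-contained version of the same proof.
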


\begin{proof}
If $\sigma_0=0$, then formula \eqref{gcal2} follows easily from \cite[Lemma (23.19)]{HeRo} and Lemma~\ref{dual}. The general case $\sigma_0 \in G$ introduces an additional factor $\pm 1$, hence \eqref{gcal2} holds in full generality.
\end{proof}

A symplectic Wilson system is constructed in the following result.

\begin{proposition} \label{pr:sep-Wilson-symplectic} Assume the same setup as in Theorem \ref{th:sep-Wilson-Rd}. If a matrix $A\in\textnormal{Sp}(d)$ is given with associated operator $\mu(A)$ such that \eqref{eq:muA-and-pi} is satisfied, i.e., 
\[ \mu(A) \pi(\nu) = \varphi(A,\nu) \cdot \pi(Av) \mu(A) \ \ \textnormal{for all} \ \ \nu\in \R^{2d}\]
and where $\vert \varphi(A,\nu)\vert = 1$. Then the sympletic Wilson system
\begin{align*} \mathcal{W}_{s}(g,G) = \Big\{ \pi(A\lambda) \pi(A\lambda^{*}_{h}) \,  c_{\gamma} \, \sum_{\sigma\in G} \varphi(A,\widetilde{R}_{\sigma}\gamma)& (-1)^{\langle I(h)+\gamma, \sigma\rangle} \pi(A\widetilde{R}_{\sigma}\gamma) \mu(A)g  \\
& \, : \, \lambda\in \Z^d\times \{0\}^{d}, h\in G, \gamma\in \{0\}^{d} \times N \Big\} \end{align*}
where $\lambda^{*}_{h} = \tfrac{1}{2} h \times \{0\}^{d}$ for $h \in G$, is a [frame, Riesz basis, orthonormal basis] if and only if the Wilson system $\mathcal{W}(g,G)$ in Theorem \ref{th:sep-Wilson-Rd} has the same property. Moreover, the [frame,Riesz] bounds of the two systems are the same.
\end{proposition}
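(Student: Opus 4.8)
The plan is to realize $\mathcal{W}_s(g,G)$ as the image of $\mathcal{W}(g,G)$ under the single unitary operator $\mu(A)$, followed by a rescaling of the individual elements by unimodular constants. Applying a fixed unitary operator to all members of a system, and multiplying the members by scalars of modulus one, both preserve the properties of being a frame, tight frame, Riesz basis, or orthonormal basis, together with the frame and Riesz bounds; hence this will immediately give the proposition. This is the natural $2^{k}$-term analogue of the argument behind Lemma~\ref{le:symplectic-Wilson-extension}, the new feature being that the $2^{k}$ phase factors attached to the modulations generally cannot be factored out of the sum.

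First I would lift the translations and modulations to $\R^{2d}$ and write a generic element of $\mathcal{W}(g,G)$ as
\[
\psi_{\lambda,h,\gamma} \;=\; \pi(\lambda)\,\pi(\lambda^{*}_{h})\, c_{\gamma}\sum_{\sigma\in G}(-1)^{\langle I(h)+\gamma,\sigma\rangle}\,\pi(\widetilde{R}_{\sigma}\gamma)\,g,
\qquad \lambda\in\Z^d\times\{0\}^d,\ h\in G,\ \gamma\in\{0\}^d\times N,
\]
with $\lambda^{*}_{h}=\tfrac12 h\times\{0\}^d$, so that $\pi(\lambda)\pi(\lambda^{*}_h)\pi(\widetilde R_\sigma\gamma)g=T_\lambda T_{h/2}M_{R_\sigma\gamma}g$. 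Applying $\mu(A)$ and moving it past each of the three time-frequency shifts by three successive uses of \eqref{eq:muA-and-pi} gives
\[
\mu(A)\,\pi(\lambda)\pi(\lambda^{*}_{h})\pi(\widetilde{R}_{\sigma}\gamma)
= \varphi(A,\lambda)\,\varphi(A,\lambda^{*}_{h})\,\varphi(A,\widetilde{R}_{\sigma}\gamma)\,\pi(A\lambda)\pi(A\lambda^{*}_{h})\pi(A\widetilde{R}_{\sigma}\gamma)\,\mu(A).
\]
The factors $\varphi(A,\lambda)$ and $\varphi(A,\lambda^{*}_{h})$ do not depend on $\sigma$, so they factor out of the sum, while the factors $\varphi(A,\widetilde{R}_{\sigma}\gamma)$ remain inside and are exactly those prescribed in the definition of $\mathcal{W}_s(g,G)$. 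Hence
\[
\mu(A)\,\psi_{\lambda,h,\gamma} \;=\; \varphi(A,\lambda)\,\varphi(A,\lambda^{*}_{h})\cdot\phi_{\lambda,h,\gamma},
\]
where $\phi_{\lambda,h,\gamma}$ denotes the corresponding element of $\mathcal{W}_s(g,G)$ and $\lvert\varphi(A,\lambda)\varphi(A,\lambda^{*}_{h})\rvert=1$; equivalently, $\mathcal{W}_s(g,G)=\{\,\overline{\varphi(A,\lambda)\varphi(A,\lambda^{*}_{h})}\,\mu(A)\psi_{\lambda,h,\gamma}\,\}$.

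To conclude I would note two elementary points. Since $\varphi(A,\lambda)\varphi(A,\lambda^{*}_{h})\neq 0$, the element $\phi_{\lambda,h,\gamma}$ vanishes precisely when $\psi_{\lambda,h,\gamma}$ does, so the zero terms identified by Lemma~\ref{gcal} are discarded in the same way from both systems and the nonzero elements correspond bijectively under $\mu(A)$. Moreover, if $\{f_k\}$ is a frame (tight frame, Riesz basis, orthonormal basis) with bounds $a,b$ and $\{c_k\}$ are unimodular, then $\{c_k f_k\}$ has the same property with the same bounds: for the frame inequalities this is immediate from $\lvert\langle f,c_kf_k\rangle\rvert=\lvert\langle f,f_k\rangle\rvert$, and for the Riesz condition one absorbs the $c_k$ into the coefficient sequence, which preserves its $\ell^2$ norm. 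Combining these with the invariance of all four properties and of the bounds under the unitary $\mu(A)$ yields the statement. The only genuine work here is the bookkeeping of phase factors in the displayed identity above; in particular one must observe that, in contrast to the reflection pairs $\nu,-\nu$ handled by Lemma~\ref{le:symplectic-phase-reflection}, the phases $\varphi(A,\widetilde R_\sigma\gamma)$ for $\sigma\in G$ need not all coincide when $k\ge 2$, which is why they are carried along explicitly in the definition of $\mathcal{W}_s(g,G)$.
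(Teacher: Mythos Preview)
Your argument is correct and matches the approach the paper has in mind. The paper does not give an explicit proof of Proposition~\ref{pr:sep-Wilson-symplectic}; it is stated as the natural $2^{k}$-term analogue of Corollary~\ref{cor:symplectic-wilson}, which in turn is declared an immediate consequence of Lemma~\ref{le:symplectic-Wilson-extension}. Your write-up simply spells out that implicit reasoning: apply $\mu(A)$ to each generator, use \eqref{eq:muA-and-pi} three times, factor the $\sigma$-independent phases $\varphi(A,\lambda)\varphi(A,\lambda^{*}_{h})$ out of the sum, and observe that the remaining $\sigma$-dependent phases $\varphi(A,\widetilde R_{\sigma}\gamma)$ are precisely those built into the definition of $\mathcal W_{s}(g,G)$. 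Your final remark about why these phases must be retained (they need not coincide across $\sigma\in G$ when $k\ge 2$) is exactly the content of the paper's comment immediately following the proposition.
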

Note that for this statement the phase-factor $\varphi(A,\widetilde{R}_{\sigma}\gamma)$ from the relation \eqref{eq:muA-and-pi} is important. This was not the case for the Wilson system considered in Theorem \ref{th:Wilson-Rd}. If all numbers in the set $\{\varphi(A,\widetilde{R}_{\sigma}\gamma)\}_{\sigma\in G}$ are the same for every fixed $\gamma\in \{0\}^{d}\times N$, then the phase factor can be omitted from the definition of $\mathcal{W}_{s}(g,G)$.

The key part of the proof of Theorem~\ref{th:sep-Wilson-Rd} is contained in the following lemma.

\begin{lemma} \label{le:sep-wilson-and-gabor-tq} Consider the same setup and the same assumptions as in Theorem \ref{th:sep-Wilson-Rd}.
Suppose that \eqref{eq:cald-bounded} holds. Then the following holds:
\begin{enumerate}[(i)]
\item If the Gabor system $\mathcal{G}(g,G)$ is considered as a shift-invariant system with generators
  $\{M_{\gamma}g\}_{\gamma\in\Z^{d}}$ and with shifts along the lattice $(1/2) \Lambda$, then its autocorrelation functions are given by
\[ t_{\alpha,\mathcal{G}}(\omega) = 2^{k}\sum_{\gamma\in\Z^d} \hat{g}(\omega-\gamma)\overline{\hat{g}(\omega-\gamma-\alpha)}, \qquad \alpha \in 2 \Lambda^\perp,  \text{a.e. } \omega\in \R^d.\] 
\item If the Wilson system $\mathcal{W}(g,G)$ is considered a shift-invariant system with generators 
\[ \{ T_{\tfrac{1}{2}h} c_{\gamma} \sum_{\sigma\in G} (-1)^{\langle I(h)+\gamma, \sigma\rangle} M_{R_{\sigma}\gamma} g \}_{ h\in G, \gamma\in N}\] and
with shifts along the lattice $\Z^{d}$, then  then its autocorrelation functions are given by
\[ t_{\alpha,\mathcal{W}}(\omega) = \begin{cases}\sum_{\gamma\in\Z^d} \hat{g}(\omega-\gamma)\overline{\hat{g}(\omega-\gamma-\alpha)} & \alpha \in 2\Lambda^{\perp}, \\ 0 & \alpha\in \Z^{d}\backslash2\Lambda^{\perp}, \end{cases} \ \ a.e. \ \omega\in \R^d.\]
\end{enumerate}
\end{lemma}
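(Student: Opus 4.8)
The plan is to follow the proof of Lemma~\ref{le:wilson-and-gabor-tq} closely, replacing the single reflection $\gamma\mapsto-\gamma$ by the reflection group $G$ and inserting a character sum over $G$; the symmetry $\hat g=\overline{\hat g}$ together with the separability of $g$ will be what kills the off-lattice autocorrelation functions. Note first that assumption~\eqref{eq:cald-bounded} forces the generators of both $\mathcal G(g,G)$ and $\mathcal W(g,G)$ to satisfy~\eqref{sfin}, so the autocorrelation functions of Definition~\ref{auto} are well defined. Assertion (i) is then immediate: since $\widehat{M_\gamma g}(\omega)=\hat g(\omega-\gamma)$, since the shift lattice $\tfrac12\Lambda$ has $[\Lambda:2\Z^d]=|G|=2^k$, hence covolume $2^{-k}$ and annihilator $(\tfrac12\Lambda)^\perp=2\Lambda^\perp$, a direct substitution into~\eqref{eq:t-alpha} gives $t_{\alpha,\mathcal G}(\omega)=2^k\sum_{\gamma\in\Z^d}\hat g(\omega-\gamma)\overline{\hat g(\omega-\gamma-\alpha)}$ for $\alpha\in2\Lambda^\perp$, exactly as in Lemma~\ref{le:wilson-and-gabor-tq}.

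For assertion (ii), denote by $\phi_{h,\gamma}$ the generator indexed by $(h,\gamma)\in G\times N$; a short computation gives
\[
\hat\phi_{h,\gamma}(\omega)=c_\gamma\, e^{-\pi i\langle h,\omega\rangle}\sum_{\sigma\in G}(-1)^{\langle I(h)+\gamma,\sigma\rangle}\hat g(\omega-R_\sigma\gamma).
\]
I would then expand $t_{\alpha,\mathcal W}(\omega)=\sum_{h\in G}\sum_{\gamma\in N}\hat\phi_{h,\gamma}(\omega)\overline{\hat\phi_{h,\gamma}(\omega-\alpha)}$ and carry out the $h$-summation first. The $h$-dependent factors amount to $(-1)^{\langle h,\alpha\rangle+\langle I(h),\sigma+\tau\rangle}$, which by~\eqref{dual4} equals $(-1)^{\langle h,\,\alpha+I(\sigma+\tau)\rangle}$; by orthogonality of characters on $G$ (this is the identity behind Lemma~\ref{gcal}, combined with Lemma~\ref{dual}) the sum over $h$ equals $2^k$ when $\alpha\equiv I(\sigma+\tau)\bmod 2\Lambda^\perp$ and $0$ otherwise. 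Hence only the pairs with $\sigma+\tau$ equal to the unique $\rho\in G$ satisfying $I(\rho)\equiv\alpha\bmod 2\Lambda^\perp$ survive.

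If $\alpha\in2\Lambda^\perp$, then $\rho=0$, only $\sigma=\tau$ contributes, the leftover phase $(-1)^{\langle\gamma,2\sigma\rangle}$ equals $1$, each point of $\textnormal{orbit}(\gamma)$ occurs $|G_\gamma|=2^k/|\textnormal{orbit}(\gamma)|$ times in the remaining sum over $\sigma$, and since $2^k c_\gamma^2 |G_\gamma|=1$ and $N$ meets every orbit exactly once, the sums over $N$ and over the orbits reassemble into $\sum_{\gamma\in\Z^d}\hat g(\omega-\gamma)\overline{\hat g(\omega-\gamma-\alpha)}$. If $\alpha\in\Z^d\setminus2\Lambda^\perp$, then $\rho\neq0$, so $\tau=\sigma+\rho$ and $R_\tau\gamma=R_\rho(R_\sigma\gamma)$; since also $\langle\gamma,\rho\rangle\equiv\langle R_\sigma\gamma,\rho\rangle\bmod2$, the summand depends on $\sigma$ only through $\mu:=R_\sigma\gamma$, and the same orbit count collapses everything to
\[
t_{\alpha,\mathcal W}(\omega)=\sum_{\mu\in\Z^d}(-1)^{\langle\mu,\rho\rangle}\,\hat g(\omega-\mu)\,\overline{\hat g(\omega-\alpha-R_\rho\mu)}.
\]

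The hard part will be showing that this last sum vanishes, and this is precisely where separability of $g$ — not just of $G$ — is used. Since $S_0,\dots,S_k$ partition $[d]$ and $g=\prod_{i=0}^k g_i\circ P_{S_i}$, the Fourier transform factors, $\hat g(\omega)=\prod_{i=0}^k\hat g_i(P_{S_i}\omega)$; and after absorbing into each $g_i$ a suitable unimodular constant (legitimate, up to an irrelevant global sign, because $\prod_i\hat g_i$ being real a.e.\ forces each $\hat g_i$ to equal a unimodular constant times a real-valued function on $\{\hat g_i\neq0\}$) we may assume each $\hat g_i$ is real-valued. Writing $\rho=\sum_{i\in J}\sigma^i$ with $J\neq\emptyset$, the reflection $R_\rho$ acts as $-\mathrm{id}$ on the blocks $S_i$, $i\in J$, and as the identity elsewhere, and $(-1)^{\langle\mu,\rho\rangle}=\prod_{i\in J}(-1)^{\sum_{j\in S_i}\mu_j}$, so the sum factors across the blocks as $\prod_{i=0}^k A_i$. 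From~\eqref{dual7} and~\eqref{dual9}, the congruence $\alpha\equiv\sum_{i\in J}\delta_{n_i}\bmod 2\Lambda^\perp$ forces $\sum_{j\in S_i}\alpha_j$ to be odd for every $i\in J$; fixing one $i_0\in J$ and making the change of variables $P_{S_{i_0}}\mu\mapsto P_{S_{i_0}}\alpha-P_{S_{i_0}}\mu$ inside $A_{i_0}$ introduces the sign $(-1)^{\sum_{j\in S_{i_0}}\alpha_j}=-1$ and, using that $\hat g_{i_0}$ is real so that the product $\hat g_{i_0}(\cdot)\overline{\hat g_{i_0}(\cdot)}$ of the two factors is unchanged when its two arguments are swapped, yields $A_{i_0}=-A_{i_0}$, i.e.\ $A_{i_0}=0$ — this is just the one-dimensional cancellation of Lemma~\ref{le:wilson-and-gabor-tq} performed inside one separability block. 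Hence $t_{\alpha,\mathcal W}(\omega)=\prod_i A_i=0$ for $\alpha\notin2\Lambda^\perp$. The chief obstacles are thus contained in this final step: seeing that the sum factors along the separability blocks, locating a block that carries an odd shift, and the small but essential normalization reducing to real-valued $\hat g_i$ (the hypothesis only gives $\hat g$ real, not each $\hat g_i$); by contrast the $h$-summation, though notationally heavy, is routine once Lemma~\ref{dual} is in hand.
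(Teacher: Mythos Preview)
Your proof is correct and follows essentially the same route as the paper's: compute $\hat\psi_{h,\gamma}$, sum over $h$ first via the character identity from Lemma~\ref{dual} to isolate the unique $\rho\in G$ (the paper calls it $\tilde\alpha$) with $I(\rho)\equiv\alpha\bmod 2\Lambda^\perp$, reassemble the orbit sum into $\sum_{\mu\in\Z^d}$ using the stabilizer count, and then in the off-lattice case factor along the separability blocks and kill one factor by the change of variables $\mu\mapsto P_{S_{i_0}}\alpha-\mu$. One point where you are actually more careful than the paper: the hypothesis only gives $\hat g$ real, and the paper silently uses $\hat g_j=\overline{\hat g_j}$ for the individual factor; your reduction---each $\hat g_i$ is a unimodular constant times a real function, and the constants cancel in $\hat g_i(a)\overline{\hat g_i(b)}$---is exactly what is needed to justify that step.
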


\begin{proof}
 The statement of (i) follows immediately from the definition of autocorrelation functions and the observation that the lattice $\Lambda = \cup_{\sigma\in G} (\sigma + 2\Z^{d})$ has density $2^{-k}$.

Consider now the Wilson system as a shift-invariant system along $\Z^d$ with generators 
\begin{equation}\label{char1}
 \psi_{h,\gamma} = T_{\tfrac{1}{2}h} c_{\gamma} \sum_{\sigma\in G} (-1)^{\langle I(h)+\gamma, \sigma\rangle} M_{R_{\sigma}\gamma} g ,  \ \ h\in G, \ \gamma\in N.
\end{equation}
Then,
\begin{equation} \label{eq:2310b} 
t_{\alpha,W}(\omega) = \sum_{h\in G,\gamma\in N}  \hat{\psi}_{h,\gamma}(\omega) \overline{\hat{\psi}_{h,\gamma}(\omega-\alpha)} \qquad\text{for } \alpha\in \Z^d \text{ and a.e. }
\omega\in \R^d.
\end{equation}
The Fourier transform of the generators $\psi_{h,\gamma}$ are given by 
\[ \hat{\psi}_{h,\gamma} = c_{\gamma} (-1)^{\langle h, \cdot \rangle} \sum_{\sigma\in G} (-1)^{\langle I(h)+\gamma,\sigma\rangle} T_{R_{\sigma}\gamma} \hat{g}.\] 
Hence, by \eqref{dual4} the expression \eqref{eq:2310b} becomes the following:
\begin{align*}
t_{\alpha,W}(\omega) & = \sum_{h\in G,\gamma\in N} \vert c_{\gamma} \vert^2 (-1)^{\langle h,\alpha\rangle } 
\sum_{\sigma,\sigma'\in G} (-1)^{\langle I( h)+\gamma,\sigma+\sigma'\rangle} T_{R_{\sigma}\gamma} \hat{g}(\omega) \overline{T_{R_{\sigma'}\gamma}\hat{g}(\omega-\alpha)} \\
& = \sum_{\gamma\in N,\sigma,\sigma'\in G} \vert c_{\gamma} \vert^{2} (-1)^{\langle\gamma,\sigma+\sigma'\rangle} T_{R_{\sigma}\gamma} \hat{g}(\omega) \overline{T_{R_{\sigma'}\gamma}\hat{g}(\omega-\alpha)} \sum_{h\in G} (-1)^{\langle h,\alpha + I(\sigma + \sigma')\rangle}.
\end{align*}
Note that by \cite[Lemma (23.19)]{HeRo} and Lemma~\ref{dual} for any $\alpha\in\Z^d$ we have
\begin{equation}\label{char}
 \sum_{h\in  G} (-1)^{\langle h,\alpha + I(\sigma + \sigma')\rangle} = \begin{cases} 2^k & \text{if } \alpha+I(\sigma+\sigma') \in 2\Lambda^\perp, \\ 0 & \text{otherwise}. \end{cases}
\end{equation} 
For a fixed $\alpha \in \Z^d$, let $\tilde \alpha\in G$ be such that $I(\tilde \alpha)=\alpha+2\Lambda^\perp$. Hence,
\[
\alpha+I(\sigma+\sigma') \in 2\Lambda^\perp \iff I(\tilde \alpha + \sigma+\sigma') \in 2\Lambda^\perp \iff
\tilde\alpha + \sigma+\sigma' \in 2\Z^d.
\]
Using \eqref{char} we continue our calculation to find that
\begin{align*}
& \quad \ t_{\alpha,W}(\omega) \\
& = \sum_{\gamma\in N,\sigma,\sigma'\in G} \vert c_{\gamma} \vert^{2} (-1)^{\langle\gamma,\sigma+\sigma'\rangle} T_{R_{\sigma}\gamma} \hat{g}(\omega) \overline{T_{R_{\sigma'}\gamma}\hat{g}(\omega-\alpha)} 
\begin{cases} 2^k & \text{if } \tilde \alpha+\sigma+\sigma' \in 2\Z^d, \\ 0 & \text{otherwise}. \end{cases}
\\
& = \sum_{\gamma\in N, \sigma\in G} 2^{k} \,  \vert c_{\gamma} \vert^2  (-1)^{\langle \gamma, \tilde \alpha\rangle}
T_{R_{\sigma}\gamma} \hat{g}(\omega) \overline{T_{R_{\sigma+\tilde \alpha}\gamma}\hat{g}(\omega-\alpha)} 
\\
 & = \sum_{\gamma\in N, \sigma\in G} \frac{\vert \textnormal{orbit}(\gamma)\vert}{2^{k}} (-1)^{\langle \gamma, \tilde \alpha\rangle}
\hat{g}(\omega-R_{\sigma}\gamma) \overline{\hat{g}(\omega-\alpha-R_{\sigma+\tilde \alpha}\gamma)} 
\\
& = \sum_{\gamma\in \Z^d} (-1)^{\langle\gamma,\tilde \alpha\rangle} \hat{g}(\omega-\gamma) \overline{\hat{g}(\omega-\alpha-R_{\tilde \alpha}\gamma)}.
\end{align*}
In the penultimate step we used the fact the stabilizer subgroup $G_\gamma = \{\sigma \in G: R_\sigma \gamma =\gamma\}$ has order $|G_\gamma|=|G|/|\textnormal{orbit}(\gamma)|$.
Hence, 
\begin{equation} \label{eq:1611a} 
t_{\alpha,W}(\omega) = \sum_{\gamma\in \Z^d} (-1)^{\langle\gamma,\tilde \alpha\rangle} \hat{g}(\omega-\gamma) \overline{\hat{g}(\omega-\alpha-R_{\tilde \alpha}\gamma)}
\qquad\text{a.e. }\omega \in \R^d \text{ for all }\alpha\in \Z^d. 
\end{equation}
We now consider two cases: (I) $\alpha\in 2\Lambda^\perp$ and (II)
$\alpha \in \Z^d \setminus 2\Lambda^\perp$.

In case (I) we have $\tilde \alpha\in 2\Z^d$, which implies that $R_{\tilde \alpha}$ is the identity, and further $(-1)^{\langle \gamma,\tilde \alpha\rangle} = 1$ for all $\gamma\in \Z^d$. Therefore, \eqref{eq:1611a} becomes
\begin{equation}\label{eq:1}
 t_{\alpha,W}(\omega) = \sum_{\gamma\in \Z^d} \hat{g}(\omega-\gamma) \overline{\hat{g}(\omega-\alpha-\gamma)} 
\quad\text{a.e. }\omega \in \R^d 
\text{ for all }\alpha\in 2\Lambda^\perp.
\end{equation}

Next we consider case (II). Due to the assumption that $g$ is separable with respect to $G$, $g$ is of the form \eqref{sep3}. By the Fubini theorem 
\[
||g||_2= \prod_{j=0}^k ||g_j||_2,
\]
and
\[
\hat g(\omega) = \prod_{j=0}^k \hat g_j \circ P_{S_j} (\omega) \qquad\text{for } \omega\in \R^d.
\]
Hence, we can rewrite \eqref{eq:1611a} as
\begin{equation} \label{eq:1711a}
t_{\alpha,W}(\omega) = \prod_{j=0}^{k} \Bigg( \sum_{\gamma \in \Z^{|S_j|}} (-1)^{\langle\gamma, P_{S_j} \tilde \alpha \rangle } \hat{g}_{j}(P_{S_j}\omega - \gamma ) \overline{\hat{g}_{j}(P_{S_j} \omega-P_{S_j}\alpha-(-1)^{\tilde \alpha_{n_j}}\gamma) }\Bigg).
\end{equation}

Case (II) implies that $\alpha \in \Z^d \setminus 2\Lambda^\perp$ and $\tilde \alpha \in \Lambda \setminus 2\Z^d$. Therefore, there exists $j= 1,2,\ldots,k$ such that $P_{S_j} \tilde \alpha$ has all odd coordinates. By \eqref{dual3} this implies that $|P_{S_j} \alpha|$ is odd. Consider the $j$-th term in the product \eqref{eq:1711a}, i.e., 
\[ C := \sum_{\gamma \in \Z^{|S_j|}} (-1)^{|\gamma|} \hat{g}_{j}(\omega' - \gamma ) \overline{\hat{g}_{j}(\omega'-P_{S_j}\alpha+\gamma) }
\qquad\text{where }\omega'=P_{S_j} \omega \in \R^{|S_j|}.
 \]
We wish to show that $C = 0$ for a.e. $\omega'$. To this end, as in the proof of Theorem~\ref{th:Wilson-Rd}, we make use of a change of variable: $\gamma \mapsto -\gamma' + P_{S_j}\alpha$. This yields that
\begin{align*} C & = \sum_{\gamma'\in \in \Z^{|S_j|}} (-1)^{|-\gamma'+P_{S_j}\alpha|} \hat g_j(\omega'+\gamma'-P_{S_j}\alpha) \overline{\hat g_j (\omega'-\gamma')} \\
& = - \sum_{\gamma' \in \Z^{|S_j|}} (-1)^{|\gamma'|} \hat g_j (\omega'-\gamma) \overline{\hat g_j (\omega'+\gamma-P_{S_j}\alpha)} = - C.
\end{align*}
Here, we used the fact that \eqref{dual3} implies that $|P_{S_j}\alpha|$ is odd and that $\hat g_j (\omega') = \overline{\hat g_j(\omega')}$ for all $\omega'\in \R^{|S_j|}$. We conclude that $C = 0$ and hence we have
\begin{equation}\label{eq:0}
t_{\alpha,W}(\omega)=0 \qquad\text{a.e. }\omega \in \R^d 
\qquad\text{for all }\alpha\in \Z^d \setminus (2\Lambda^\perp).
\end{equation}
This completes the proof of Lemma \ref{le:sep-wilson-and-gabor-tq}.
\end{proof}

We are now ready to give the proof of Theorem~\ref{th:sep-Wilson-Rd}.

\begin{proof} 
Assume that either the Gabor system $\mathcal G(g,G)$ or that the Wilson system $\mathcal W(g,G)$ is a Bessel sequence. Then, the same argument as in the proof of Theorem \ref{th:Wilson-Rd} with the use of Proposition \ref{thm:wf-function} and Lemma \ref{le:sep-wilson-and-gabor-tq} instead of Lemma \ref{le:wilson-and-gabor-tq} shows that for any $f\in\mathcal D$,
\[
 \sum_{\phi \in \mathcal{G}(g,G)}\abs{\innerprod{f}{\phi}}^2 =  \sum_{\alpha\in \Lambda^\perp} \! \langle M_{t_{\alpha,\mathcal{G}}} T_{\alpha}  \hat{f}, \hat{f} \rangle  = 2^k \sum_{\alpha\in \Z^d} \langle M_{t_{\alpha,\mathcal{W}}} T_{\alpha} \hat{f}, \hat{f} \rangle
 = 2^k \!\!\sum_{\phi \in \mathcal{W}(g,G)}\abs{\innerprod{f}{\phi}}^2\!.
 \]
This implies the equality $S_{\mathcal{G}} = 2^{k} S_{\mathcal{W}}$, which shows (i). At the same time it shows the ``if'' direction of (ii) and (iii) as in the proof of Theorem \ref{th:Wilson-Rd}. 

Concerning the converse directions in statements \eqref{item:7} and
\eqref{item:8} we proceed as follows. 
If the Gabor system $\mathcal{G}(g,G)$ is a tight frame with frame bound $2^{k}$, then the Wilson system is a tight frame with frame bound $1$. By Lemma \ref{le:unit-norm-parseval-frame-is-ONB} it remains to show that all non-zero generators \eqref{char1} of the Wilson system \eqref{eq:1711b} have norm equal to $1$.
The assumption that the Gabor system in \eqref{item:6} is a tight frame combined with Lemma~\ref{le:wexraz} imply that the family of functions $\{M_{2\gamma} g \}_{\gamma\in \Lambda^\perp}$ is an orthogonal set. Note that 
\[
R_\sigma \gamma - \gamma \in 2\Z^d \subset 2 \Lambda^\perp \qquad\text{for any } \sigma \in G, \gamma\in\Z^d.
\]
Consequently, for any $\gamma \in N$, the family of functions $\{M_{R_{\sigma}\gamma} g \}_{\sigma \in G}$ is an orthonormal set after neglecting that each function is repeated $|G_\gamma|=|G|/|\text{orbit}(\gamma)|$ times. 
Here, $G_\gamma = \{\sigma \in G: R_\sigma \gamma =\gamma\}$ is the stabilizer of $\gamma$. For a fixed $\gamma \in N$ and $h\in G$, consider the character $\chi \in \widehat G_\gamma$ given as in Lemma~\ref{gcal}.
If $\chi \equiv 1$, then a direct calculation using \eqref{gcal2} shows that
\[
||\psi_{h,\gamma}||^2=|c_\gamma|^2 \, \bigg\| \sum_{\sigma\in G/G_\gamma} \pm |G_\gamma| M_{R_{\sigma}\gamma} g \bigg\|^2 
= |c_\gamma|^2 |G_\gamma|^2 |G/G_\gamma| = 1.
\]
Otherwise, if $\chi \not\equiv 1$, then $\psi_{h,\gamma}=0$ and these generators are vacuous. Therefore, by Lemma~\ref{le:unit-norm-parseval-frame-is-ONB}, the Wilson system
$\mathcal W(g,G)$ is an orthonormal basis of $L^2(\R^d)$.
We have now proven \eqref{item:8}.

To finish the proof of \eqref{item:7} we adapt  the  argument of \cite[Corollary 8.5.6]{MR1843717}: Let $S$ be the frame operator of the Gabor frame $\mathcal{G}(g,G)$. Then, 
\[
S^{-1/2} \mathcal{G}(g) = \mathcal{G}(S^{-1/2}g)\]
 is a Parseval frame for $L^{2}(\R^{d})$. We claim that, just as the function $g$, so is the function $S^{-1/2} g$ separable with respect to group $G$. Since $g$ is separable with respect to $G$, we can write it in the form \eqref{sep3}. Hence, the Gabor system $\mathcal{G}(g,G)$ is a tensor product of the Gabor systems $\{ M_{\gamma}T_{\lambda} g_{j} \, : \, \lambda\in P_{S_j}(\tfrac{1}{2}\Lambda),\gamma\in P_{S_j}(\Z^d)\}$, $j=0,\ldots, k$. Let $T_j$ denote the frame operator of these Gabor systems which acts on $L^2(\R^{|S_j|})$. Hence, the frame operator $S$ is a tensor product of frame operators $T_j$. That is, for any separable function $f\in L^2(\R^d)$ of the form \eqref{sep3} we have
\[
S(f)(x) = \prod_{j=0}^k T_j(f_j)\circ P_{S_j}(x) \qquad\text{for }x\in\R^d.
\]
A similar formula holds for $S^{-1/2}$. Hence, we see that $S^{-1/2}g$ is
separable with respect to $G$. Since each frame operator $T_j$ 
preserves symmetry as in \eqref{eq:symmetry}, it also follows that $\cF
S^{-1/2}g(\omega)=\overline{\cF S^{-1/2}g(\omega)}$. Hence $\mathcal{W}(S^{-1/2}g,G)$ is an orthonormal basis. Moreover, 
\[
\mathcal{W}(S^{-1/2}g,G) = S^{-1/2} \mathcal{W}(g,G).
\]
But this implies that the Wilson system itself is a Riesz basis. This proves \eqref{item:7}.
\end{proof}

\begin{remark}
In general, choosing an arbitrary separable group $G$ of intermediate order $2^k$, $k=1,\ldots,d-1$ leads to a huge number of distinct Wilson systems. Indeed, let $p(n)$ be the partition function that  represents the number of ways of writing $n$ as a sum of positive integers. Then, any partition of $[d]=\{1,\ldots,d\}$ leads to a   separable subgroup $G \subset \Z^d/(2\Z^d)$. Hence, up to a permutation isomorphism there are $p(d)$ distinct separable groups in the dimension $d$. Since $p(d)$ satisfies the asymptotic growth
\[
\log p(d) \sim \pi \sqrt{\frac 23}\sqrt{d} \qquad\text{as } d\to \infty,
\]
hence this number grows rapidly with the dimension $d$.
\end{remark}

By tensoring the construction in
Example~\ref{ex:thm27-nice-generators} and the usual construction of
Wilson bases in dimension one, it is clear that we can construct
generators $g \in L^2(\R^d)$ of $2^k$-modular Wilson bases with good
time-frequency localization for \emph{each} $k=1,\dots,d$. In other
words, for each $k=1,\dots,d$, we can find a subgroup $G$ of order
$2^k$ such that the corresponding Wilson system has nice window
functions generating an orthonormal basis.  
However, not every Wilson system from
Theorem~\ref{th:sep-Wilson-Rd}, i.e., not every subgroup $G$, has nice basis generators.
As an example, consider $d=2$ and take $G$ to be the subgroup with coset
representatives $(0,0)$ and $(1,0)$. Then $\Lambda=\Z \times
2\Z$. Being separable with respect to $G$ means that
$g(x,y)=g_1(x)g_2(y)$. Hence, the Gabor system as in Theorem
\ref{th:sep-Wilson-Rd}\eqref{item:6} with $g(x,y)=g_1(x)g_2(y)$ is a tight frame
for $L^2(\R^2)$ if and only if $\{T_{k/2} M_m g_1\}_{k,m \in \Z}$ and
$\{T_k M_m g_2\}_{k,m \in \Z}$ are tight frames for $L^2(\R)$. However,
by the Balian-Low theorem $g_2$ cannot be well localized in time and
frequency. Hence, the same conclusion holds for $g$. 

While it is now possible by Theorem~\ref{th:sep-Wilson-Rd} to construct
Wilson bases from Gabor frames of redundancy $2^k$, $k=1,\dots,d$, it
is still an open question, mentioned in \cite{MR1843717}, whether other redundancies are
possible. Wojdy{\l}{\l}o \cite{MR2343407} shows that
it is possible to construct redundant Wilson-type tight frames for $L^2(\R)$
from Gabor tight frames of redundancy $3$, however, this approach
does not provide orthogonality. It is our hope that the methods
developed in this paper can be used to attack this long standing open
problem.


\section*{Acknowledgment} 
M.\ Bownik was partially supported by NSF grant DMS-1265711 and by a grant from the Simons Foundation \#426295.
K.~A.~Okoudjou  was partially supported by a grant from the Simons Foundation $\# 319197$ and ARO grant W911NF1610008.


\begin{thebibliography}{10}

\bibitem{MR1247517}
{\sc P.~Auscher}, {\em Remarks on the local {F}ourier bases}, in Wavelets:
  mathematics and applications, Stud. Adv. Math., CRC, Boca Raton, FL, 1994,
  pp.~203--218.

\bibitem{BCKOR}
{\sc R.~Balan, J.~G. Christensen, I.~A. Krishtal, K.~A. Okoudjou, and J.~L.
  Romero}, {\em Multi-window {G}abor frames in amalgam spaces}, Math. Res.
  Lett., 21 (2014), pp.~55--69.

\bibitem{Bali81}
{\sc R.~Balian}, {\em Un principe d'incertitude fort en th\'eorie du signal ou
  en m\'ecanique quantique}, C. R. Acad. Sci. Paris, 292 (1981),
  pp.~1357--1362.

\bibitem{Bat88}
{\sc G.~Battle}, {\em Heisenberg proof of the {B}alian-{L}ow theorem}, Lett.
  Math. Phys., 15 (1988), pp.~175--177.

\bibitem{BenHeiWal94}
{\sc J.~J. Benedetto, C.~Heil, and D.~F. Walnut}, {\em Differentiation and the
  {B}alian-{L}ow theorem}, J. Fourier Anal. Appl., 1 (1995), pp.~355--402.

\bibitem{BenLi-1998}
{\sc J.~J. Benedetto and S.~Li}, {\em The theory of multiresolution analysis
  frames and applications to filter banks}, Appl. Comput. Harmon. Anal., 5
  (1998), pp.~389--427.

\bibitem{MR1795633}
{\sc M.~Bownik}, {\em The structure of shift-invariant subspaces of {$L^2({\bf
  R}^n)$}}, J. Funct. Anal., 177 (2000), pp.~282--309,
  \url{https://doi.org/10.1006/jfan.2000.3635}.

\bibitem{MR2746669}
{\sc M.~Bownik and J.~Lemvig}, {\em Affine and quasi-affine frames for rational
  dilations}, Trans. Amer. Math. Soc., 363 (2011), pp.~1887--1924,
  \url{https://doi.org/10.1090/S0002-9947-2010-05200-6}.

\bibitem{MR1946982}
{\sc O.~Christensen}, {\em An introduction to frames and {R}iesz bases},
  Applied and Numerical Harmonic Analysis, Birkh\"auser Boston Inc., Boston,
  MA, 2003.

\bibitem{Dau90}
{\sc I.~Daubechies}, {\em The wavelet transform, time-frequency localization
  and signal analysis}, IEEE Trans. Inform. Theory, 39 (1990), pp.~961--1005.

\bibitem{MR1084973}
{\sc I.~Daubechies, S.~Jaffard, and J.-L. Journ{\'e}}, {\em A simple {W}ilson
  orthonormal basis with exponential decay}, SIAM J. Math. Anal., 22 (1991),
  pp.~554--573, \url{https://doi.org/10.1137/0522035}.

\bibitem{DauJan93}
{\sc I.~Daubechies and A.~J.~E.~M. Janssen}, {\em Two theorems on lattice
  expansions}, IEEE Trans. Inform. Theory, 39 (1993), pp.~3--6.

\bibitem{MR2827662}
{\sc M.~A. de~Gosson}, {\em Symplectic methods in harmonic analysis and in
  mathematical physics}, vol.~7 of Pseudo-Differential Operators. Theory and
  Applications, Birkh\"auser/Springer Basel AG, Basel, 2011,
  \url{https://doi.org/10.1007/978-3-7643-9992-4}.

\bibitem{MR1601107}
{\sc H.~G. Feichtinger and G.~Zimmermann}, {\em A {B}anach space of test
  functions for {G}abor analysis}, in Gabor analysis and algorithms, Appl.
  Numer. Harmon. Anal., Birkh\"auser Boston, Boston, MA, 1998, pp.~123--170.

\bibitem{MR983366}
{\sc G.~B. Folland}, {\em Harmonic analysis in phase space}, vol.~122 of Annals
  of Mathematics Studies, Princeton University Press, Princeton, NJ, 1989.

\bibitem{MR1843717}
{\sc K.~Gr{\"o}chenig}, {\em Foundations of time-frequency analysis}, Applied
  and Numerical Harmonic Analysis, Birkh\"auser Boston, Inc., Boston, MA, 2001,
  \url{https://doi.org/10.1007/978-1-4612-0003-1}.

\bibitem{grle04}
{\sc K.~{G}r{\"o}chenig and M.~{L}einert}, {\em {W}iener's lemma for twisted
  convolution and {G}abor frames}, {J}. {A}mer. {M}ath. {S}oc., 17 (2004),
  pp.~1--18.

\bibitem{MR1916862}
{\sc E.~Hern{\'a}ndez, D.~Labate, and G.~Weiss}, {\em A unified
  characterization of reproducing systems generated by a finite family. {II}},
  J. Geom. Anal., 12 (2002), pp.~615--662,
  \url{https://doi.org/10.1007/BF02930656}.

\bibitem{HeRo}
{\sc E.~Hewitt and K.~A. Ross}, {\em Abstract harmonic analysis. {V}ol. {I}},
  vol.~115 of Grundlehren der Mathematischen Wissenschaften [Fundamental
  Principles of Mathematical Sciences], Springer-Verlag, Berlin-New York,
  second~ed., 1979.
\newblock Structure of topological groups, integration theory, group
  representations.

\bibitem{MR1601115}
{\sc A.~J. E.~M. Janssen}, {\em The duality condition for {W}eyl-{H}eisenberg
  frames}, in Gabor analysis and algorithms, Appl. Numer. Harmon. Anal.,
  Birkh\"auser Boston, Boston, MA, 1998, pp.~33--84.

\bibitem{ja98}
{\sc A.~J. E.~M. {J}anssen}, {\em {T}he duality condition for
  {W}eyl-{H}eisenberg frames.}, in {G}abor {A}nalysis and {A}lgorithms:
  {T}heory and {A}pplications, H.~G. {F}eichtinger and T.~{S}trohmer, eds.,
  1998, pp.~33--84, 453--488.

\bibitem{KO}
{\sc I.~A. Krishtal and K.~A. Okoudjou}, {\em Invertibility of the {G}abor
  frame operator on the {W}iener amalgam space}, J. Approx. Theory, 153 (2008),
  pp.~212--224.

\bibitem{MR2191772}
{\sc G.~Kutyniok and T.~Strohmer}, {\em Wilson bases for general time-frequency
  lattices}, SIAM J. Math. Anal., 37 (2005), pp.~685--711 (electronic),
  \url{https://doi.org/10.1137/S003614100343723X}.

\bibitem{Low85}
{\sc F.~Low}, {\em Complete sets of wave packets}, in A passion for
  {P}hysics--{E}ssays in {H}onor of {G}eoffrey {C}hew, C.~D. et~al., ed., World
  Scientific, Singapore, 1985, pp.~17--22.

\bibitem{MR1350650}
{\sc A.~Ron and Z.~Shen}, {\em Frames and stable bases for shift-invariant
  subspaces of {$L_2(\mathbf R^d)$}}, Canad. J. Math., 47 (1995),
  pp.~1051--1094, \url{https://doi.org/10.4153/CJM-1995-056-1}.

\bibitem{MR1460623}
{\sc A.~Ron and Z.~Shen}, {\em Weyl-{H}eisenberg frames and {R}iesz bases in
  {$L_2(\bold R^d)$}}, Duke Math. J., 89 (1997), pp.~237--282,
  \url{https://doi.org/10.1215/S0012-7094-97-08913-4}.

\bibitem{MR2132766}
{\sc A.~Ron and Z.~Shen}, {\em Generalized shift-invariant systems}, Constr.
  Approx., 22 (2005), pp.~1--45,
  \url{https://doi.org/10.1007/s00365-004-0563-8}.

\bibitem{MR1038803}
{\sc W.~Rudin}, {\em Fourier analysis on groups}, Wiley Classics Library, John
  Wiley \& Sons Inc., New York, 1990,
  \url{https://doi.org/10.1002/9781118165621}.
\newblock Reprint of the 1962 original, A Wiley-Interscience Publication.

\bibitem{wi87}
{\sc K.~G. {W}ilson}, {\em {G}eneralized {W}annier functions},  (1987).
\newblock unpublished manuscript.

\bibitem{MR2343407}
{\sc P.~Wojdy{\l}{\l}o}, {\em Modified {W}ilson orthonormal bases}, Sampl.
  Theory Signal Image Process., 6 (2007), pp.~223--235.

\bibitem{wo08-1}
{\sc P.~{W}ojdy{\l}{\l}o}, {\em {C}haracterization of {W}ilson systems for
  general lattices}, {I}nt. {J}. {W}avelets {M}ultiresolut. {I}nf. {P}rocess.,
  6 (2008), pp.~305--314.

\end{thebibliography}
\end{document}